\newtheorem{theorem}{Theorem}[section]
\newtheorem{lemma}{Lemma}[section]
\newtheorem{proposition}{Proposition}[section]
\newtheorem{corollary}{Corollary}[section]
\theoremstyle{definition}
\newtheorem{definition}{Definition}[section]
\theoremstyle{remark}
\newtheorem{remark}{Remark}[section]
\numberwithin{equation}{section}
\allowdisplaybreaks \setlength{\textwidth}{15cm}
\begin{document}
\author{Chenlu Zhang}
\address{College of Mathematics and Statistics, Chongqing University,
                             Chongqing, 401331,  China.}
\email{20220601002@stu.cqu.edu.cn}
\author{Huaqiao Wang}
\address{College of Mathematics and Statistics, Chongqing University,
                             Chongqing, 401331,  China.}
\email{wanghuaqiao@cqu.edu.cn}

\title[Global well-posedness of the LLS equation]
{Global well-posedness of smooth solutions to the Landau--Lifshitz--Slonczewski equation}
\thanks{Corresponding author: wanghuaqiao@cqu.edu.cn}
\keywords{Landau--Lifshitz--Slonczewski equation, Smooth solutions, Existence and uniqueness, Morrey spaces, energy estimates.}\
\subjclass[2010]{82D40; 35K10; 35D35.}
\begin{abstract}
In this paper, we mainly consider the global solvability of smooth solutions for the Cauchy problem of the three-dimensional Landau--Lifshitz--Slonczewski equation in the Morrey space. We derive the covariant complex Ginzburg--Landau equation by using moving frames to address the nonlinear parts. Applying the semigroup estimates and energy methods, we extend local classical solutions to global solutions and prove the boundedness of $\|\nabla\boldsymbol{m}\|_{L^{\infty}(\mathbb{R}^{3})}$, where $\boldsymbol{m}$ is the magnetic intensity. Moreover, we obtain a global weak solution by using an approximation result and improve the regularity of the obtained solution by the regularity theory. Finally, we establish the existence and uniqueness of global smooth solutions under some conditions on $\nabla\boldsymbol{m}_{0}$ and the density of the spin-polarized current.
\end{abstract}

\maketitle
\section{Introduction}\label{Sec1}
After the discovery of the giant magnetoresistance effect, the spin-transfer torque effect (STT for short) is another milestone in the theory of micromagnetism. In the 1970s, Berger \cite{BerL,Freitas,Hung} discovered that electric currents can drive the motion of magnetic domain walls. In the late 1980s, Slonczewski \cite{SlonJC} showed the existence of interlayer exchange coupling between the two ferromagnetic electrodes of a magnetic tunnel junction in theory. Until 1996, Slonczewski and Berger \cite{BerL2,SlonJC2} both proposed the STT effect in the structure of ``sandwich'' magnetic thin film, respectively. This important conclusion attracted widespread attention in the theory of micromagnetism, then the existence of the STT effect was be confirmed, for more details see \cite{Kub,San,Tso}. There are different types of STT, we are mainly concerned with the adiabatic and non-adiabatic STT. When the current flows along the film surface in any direction, they are represented by \cite{Smith}:
\begin{align*}
\boldsymbol{T}_{ad}&=\theta_{1}\boldsymbol{m}\times\left[\boldsymbol{m}\times(\boldsymbol{v}\cdot\nabla)\boldsymbol{m}\right],\\
\boldsymbol{T}_{na}&=\theta_{2}\boldsymbol{m}\times(\boldsymbol{v}\cdot\nabla)\boldsymbol{m},
\end{align*}
where $\boldsymbol{m}$ is the magnetic intensity, $\boldsymbol{v}$ denotes the density of the spin-polarized current along the current direction, $\theta_{1}$ and $\theta_{2}$ are dimensionless constants. $\boldsymbol{T}_{ad}$ denotes the adiabatic STT, which was first introduced by Bazaliy \cite{Ralph} when studying the ballistic transport of conduction electrons in half-metal. $\boldsymbol{T}_{na}$ denotes the non-adiabatic STT, which was proposed by Zhang \cite{ZhangLi} in exploiting non-equilibrium conduction electrons when the spin-polarized behavior deviates from its local magnetic torque.

Notice that if we do not consider the STT effect, the dynamic behavior of magnetization can be described by the Landau-Lifshitz-Gilbert (LLG for short) equation:
\begin{align}
\frac{\partial{\boldsymbol{m}}}{\partial{t}}=\theta_{3}\boldsymbol{m}\times\Delta\boldsymbol{m}+\theta_{4}
\left(\boldsymbol{m}\times\frac{\partial{\boldsymbol{m}}}{\partial{t}}\right),\;\; \boldsymbol{m}(0,x)=\boldsymbol{m}_0,\label{1.8}
\end{align}
where $\boldsymbol{m}(t,x):\mathbb{R}\times\mathbb{R}^{3}\rightarrow\mathbb{S}^{2}\subset \mathbb{R}^{3}$ represents the magnetic intensity. In this paper, we only consider the exchange field, namely $\Delta\boldsymbol{m}$ is the effective field. $\theta_{3}$ is the constant with respect to the magnetogyric ratio, $\theta_{4}$ is the Gilbert damping constant. The first term on the right-hand side of \eqref{1.8} represents Larmor precession and the second term is Gilbert damping term.

The current can produce the STT effect when the magnetic nanowires \cite{Hei,MyersAlbert} with in-plane current flow. Generally speaking, physicists \cite{Han} directly add the STT term to the right-hand side of \eqref{1.8}, then the LLG equation with the STT effect can be expressed by:
\begin{align}
\frac{{\partial} \boldsymbol{m}}{\partial t}&=\theta_{3}(\boldsymbol{m}\times\Delta\boldsymbol{m})
+\theta_{4}\left(\boldsymbol{m}\times\frac{{\partial}\boldsymbol{m}}{\partial t}\right)+\boldsymbol{T}_{ad}+\boldsymbol{T}_{na},\;\; \boldsymbol{m}(0,x)=\boldsymbol{m}_0. \label{1.5}
\end{align}
By using the fact that $|\boldsymbol{m}|=1$ and $\boldsymbol{m}\times\boldsymbol{m}\times (\boldsymbol{v}\cdot\nabla)\boldsymbol{m}=-(\boldsymbol{v}\cdot\nabla)\boldsymbol{m}$, adding $\theta_{4}\boldsymbol{m}\times \eqref{1.5}$ into \eqref{1.5}, for notational convenience, we neglect some coefficients and obtain the Landau-Lifshitz-Slonczewski (LLS for short) equation:
\begin{align}\label{1.6}
\begin{cases}
\frac{\partial\boldsymbol{m}}{\partial t}+(\boldsymbol{v}\cdot\nabla)\boldsymbol{m}
+\boldsymbol{m}\times (\boldsymbol{v}\cdot\nabla)\boldsymbol{m}
=-\boldsymbol{m}\times\Delta\boldsymbol{m}
-\varepsilon(\boldsymbol{m}\times\boldsymbol{m}\times\Delta\boldsymbol{m}),\\
\boldsymbol{m}(0,x)=\boldsymbol{m}_{0},
\end{cases}
\end{align}
where $\varepsilon\in(0,1)$ denotes the Gilbert damping constant. Taking into account the fact that $-\boldsymbol{m}\times\boldsymbol{m}\times\Delta\boldsymbol{m}=\Delta\boldsymbol{m}+|\nabla\boldsymbol{m}|^{2}\boldsymbol{m}$ by using $|\boldsymbol{m}|=1$, equation \eqref{1.6} can be written as
\begin{align}\label{main-eq}
\begin{cases}
\frac{\partial\boldsymbol{m}}{\partial t}+(\boldsymbol{v}\cdot\nabla)\boldsymbol{m}
+\boldsymbol{m}\times(\boldsymbol{v}\cdot\nabla)\boldsymbol{m}
=-\boldsymbol{m}\times\Delta \boldsymbol{m}+\varepsilon(\Delta\boldsymbol{m}+|\nabla\boldsymbol{m}|^{2}\boldsymbol{m}),\\
\boldsymbol{m}(0,x)=\boldsymbol{m}_{0}.
\end{cases}
\end{align}

When we ignore the STT effect, the LLS equation degenerate to the LLG equation. There have been many outstanding results for the existence of smooth solutions of the LLG equation. For the case of dimension $n=1$, Zhou-Guo-Tan applied the spatial difference method and the a prior estimate to prove the existence and uniqueness of smooth solutions for the initial-boundary value problem with periodic boundary condition, it is worthy that when Gilbert damping constant $\varepsilon\rightarrow0$, they obtained a unique global smooth solution, for details see \cite{ZhouGuoTan}. Later, Ding-Guo-Su \cite{DingGuoSu} gained local smooth solutions of the inhomogeneous LLG equation by the viscosity vanishing method. For the case of dimension $n=2$, Chen-Ding-Guo \cite{ChenDing} discussed that any weak solutions with finite energy are unique and smooth with the exception of at most finite points for the Cauchy problem. Inspired by \cite{ZhouGuoTan}, Guo-Hong \cite{GuoHong} considered the global smooth solution for a small initial data from $\mathbb{R}^{2}$ into the unit sphere $\mathbb{S}^{2}$. A few years later, Carbou-Fabrie \cite{Carbou} showed global existence of regular solutions for the initial-boundary problem value with small data. For the case of higher dimensions, Guo-Hong \cite{GuoHong} failed to directly get globally smooth solutions but obtained global weak solutions. Furthermore, in an unbounded Riemannian manifold, by applying approximation method, Ding-Wang \cite{DingWang} investigated that the short-time smooth solution blow-up apparently in finite time. Very recently, there are more results about the global smoothness of solutions. Based on the essential structure of equations, Melcher \cite{Melcher1} sought a new approach from the topology for transforming the LLG equation into the covariant LLG equation, and under a small condition of the initial data, they claimed that the Cauchy problem of the LLG equation has global smooth solutions in the Sobolev space. Later, Lin-Lai-Wang \cite{Lin} extended the method to the Morrey space and got the global solvability of the LLG equation.

When we pay attention to the regular solution, there is a significant approach that has been widely used to improve the regularity of weak solutions. The regularity theory was used widely in the heat flow for harmonic maps. For the case of dimension $m>2$, Chen-Struwe \cite{ChenStu} showed that there exists a global regular solution when the harmonic map $u_{0}$ is smooth by energy methods and monotonicity formula, and more similar results see \cite{Stu,Stu1}. For the case of dimension $m\geq 2$, Evans \cite{Evans} proved that the stationary Harmonic map exists a weak and smooth mapping in an open subset which satisfies the $(m-2)$-dimensional Hausdorff measure of its complementary set equals to zero. After, by using the parabolic monotonicity inequality, Morrey's lemma of parabolic type and BMO spaces, Feldman \cite{Feld} obtained the weak solution of the harmonic map of evolution is smooth away from an open set. A year later, Chen-Li-Lin \cite{ChenLiLin} established a similar result to Feldman, whereas there are substantial differences in some technical details, such as the application of the duality between the parabolic BMO and Hardy spaces. Inspired by the study of the heat flow for harmonic maps, there are some people considered the regularity theory of the LLG equation. In higher dimensions, Moser \cite{Moser} proved that there exists an open set with a complement of vanishing $n$-dimensional parabolic Hausdorff measure such that solutions are smooth by the stability condition and energy decay. Subsequently, Ding-Guo \cite{DingGuo,DingGuo1} successively exploited the partial regularity of weak solutions on a compact, smooth Riemannian manifold with boundary or without boundary. Later, for the case of smooth initial data with finite energy, Melcher \cite{Melcher} investigated the partial regularity of weak solutions by monotonicity inequality and suitable time slices method.

As we all know, the well-posedness of the LLG equation are very complete. It is easy to see that there are a few results about the well-posedness of the LLS equation. In Sobolev space, Melcher-Ptashnyk \cite{MelcherP} established the existence and uniqueness of global weak solutions and also proved the existence of smooth solutions for the three-dimensional LLS equation with small and regular initial data. However, there are a few relevant results about the global existence of smooth solutions for the LLS equation in other suitable spaces. Inspired by \cite{Lin}, we notice that some conclusions in the Sobolev space still hold in the Morrey space, such as they both have the interpolation inequality, similar linear estimates of the dissipative Schr\"{o}dinger semigroup, see \cite[Lemma 3.1]{Melcher1} and \cite{Lin}. Based on the fact that the regularity theory has been successfully used in the LLG equation \cite{Lin, Melcher1}, we try to apply the modified approach to improve the regularity of weak solutions for the LLS equation. Hence, our goal is to investigate the existence and uniqueness of the global smooth solutions for the LLS equation with the lower regular initial data in the Morrey space. In the context of proving classical solutions persist as long as spatial gradients remain bounded, we need to get the boundedness of $\|\nabla\boldsymbol{m}\|_{L^{\infty}(\mathbb{R}^{n})}$. It is necessary to seek for some approaches to address the nonlinear parts due to their strong nonlinearity. There is a widely used method called moving frames, which reveals originally the structure of the nonlinear parts by describing the move of coordinates on the tangent bundle. Applying moving frames, we transform the LLS equation into the covariant complex Ginzburg-Landau equation. In view of the LLS equation has more terms about $\boldsymbol{v}$ than the LLG equation, hence some new estimates with respect to $\boldsymbol{v}(t)$ are introduced by nonlinear estimates. It is worth mentioning that we give a detailed proof for choosing the Coulomb gauge.

Moreover, unlike the LLG equation has invariance, our nonlinear parts have an extra term $(\boldsymbol{v}\cdot\nabla)\boldsymbol{m}$, which can generate the invariance of the LLS equation out of work. Specifically, when we prove a particular energy inequality in the regularity theory, it is desired that the estimate of the $M^{2,3}$-norm of $\partial_{t}\boldsymbol{m}$ can be controlled by the $M^{2,3}$-norm of $\nabla\boldsymbol{m}$ in appropriate cylinders $P_{r}(z)$ with the same coefficients about $r$. However, the exponents of $r$ are different on account of the invariance failure. Therefore, instead of estimating directly in the cylinder, we first consider it in the whole space $\mathbb{R}^{n}$, and then by means of selecting a suitable test function with compact support in the cylinder, the equation is still defined in the whole space yields that we can use the Fourier transform. Finally, we combine the Plancherel equality with the interpolation inequality to achieve the purpose of balancing the coefficients.

Now, we state our main result in the following.
\begin{theorem}\label{main-th}
For any given $\boldsymbol{m}_{\infty}\in\mathbb{S}^{2}$ and the initial data $\boldsymbol{m}_{0}:\mathbb{R}^{3}\rightarrow\mathbb{S}^{2}$ with
\begin{align*}
\boldsymbol{m}_{0}-\boldsymbol{m}_{\infty}\in M^{2,3,1}(\mathbb{R}^{3})\cap M^{3,3,1}(\mathbb{R}^{3}),
\end{align*}
if there exist a sufficiently small constant $\rho>0$ such that
\begin{align*}
\|\nabla\boldsymbol{m}_{0}\|_{M^{3,3}(\mathbb{R}^{3})}+\sup\limits_{t>0}\|\boldsymbol{v}(t)\|_{M^{3,3}(\mathbb{R}^{3})}
+\sup\limits_{t>0}\sqrt{t}\|\boldsymbol{v}(t)\|_{L^{\infty}(\mathbb{R}^{3})}<\rho,
\end{align*}
then for any integer $k>3$ and all $t>0$, the equation \eqref{main-eq} have a global smooth solution satisfying
\begin{align*}
\boldsymbol{m}-\boldsymbol{m}_{\infty}\in C^{0}([0,\infty);M^{2,3,k}(\mathbb{R}^{3};\mathbb{R}^{3}))\cap C^{0}((0,\infty);M^{2,3,\infty}(\mathbb{R}^{3};\mathbb{R}^{3})).
\end{align*}
Moreover, $\boldsymbol{m}$ is unique and stable.
\end{theorem}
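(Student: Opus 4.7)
The plan is to combine a small-data contraction argument in Morrey spaces with a partial regularity bootstrap, following the pattern used for the LLG equation in \cite{Lin,Melcher1}, but adapting each step to accommodate the Slonczewski drift $(\boldsymbol{v}\cdot\nabla)\boldsymbol{m}$. First, I would pass from \eqref{main-eq} to a complex scalar equation via moving frames: choosing an orthonormal frame $\{e_1,e_2\}$ on $\boldsymbol{m}^{*}T\mathbb{S}^{2}$ with the Coulomb gauge $\operatorname{div}\langle \nabla e_{1},e_{2}\rangle=0$ (existence and uniqueness of such a gauge being the technical point that the paper flags), the pulled-back gradient $q_j=\langle \partial_j\boldsymbol{m},e_1\rangle+i\langle\partial_j\boldsymbol{m},e_2\rangle$ satisfies a covariant complex Ginzburg--Landau equation of the form $\partial_t q - (\varepsilon+i)\Delta_A q = N(q,A)+V(\boldsymbol{v},q)$, where $A$ is the connection one-form and $V$ collects all new terms forced by $\boldsymbol{v}$. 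The linear part is the dissipative Schr\"odinger semigroup $e^{(\varepsilon+i)t\Delta}$, whose Morrey-space mapping properties from \cite[Lemma 3.1]{Melcher1} transfer verbatim.

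Second, I would set up a closed fixed-point argument for $q$ in a ball of a norm of the type $\sup_{t>0}\|q(t)\|_{M^{3,3}}+\sup_{t>0}\sqrt{t}\|q(t)\|_{L^{\infty}}$, mirroring the LLG strategy. The new work is to absorb the $\boldsymbol{v}$-dependent trilinear/bilinear terms: using $\|\boldsymbol{v}(t)\|_{M^{3,3}}$ and $\sqrt{t}\,\|\boldsymbol{v}(t)\|_{L^{\infty}}$ as the auxiliary smallness quantities, each $\boldsymbol{v}$-term can be matched against a Duhamel estimate with a gain of at least $t^{-1/2}$, and the smallness hypothesis $\rho\ll1$ closes the contraction. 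This yields a unique mild solution with the claimed persistence $\boldsymbol{m}-\boldsymbol{m}_{\infty}\in C^{0}([0,\infty);M^{2,3,1})$ and the bound $\sup_{t>0}\sqrt{t}\,\|\nabla\boldsymbol{m}(t)\|_{L^{\infty}}<\infty$, which in particular gives the global $L^{\infty}_{t,x}$-bound on $\nabla\boldsymbol{m}$ that lets classical local solutions be continued to all of $[0,\infty)$.

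Third, to upgrade from the mild/weak solution to a smooth one I would invoke the parabolic partial regularity machinery as in \cite{ChenLiLin,Moser,Melcher}: construct an $\varepsilon$-regularity lemma saying that if $\|\nabla\boldsymbol{m}\|_{M^{2,3}(P_r(z))}$ is small on a parabolic cylinder, then $\boldsymbol{m}$ is H\"older (hence $C^{\infty}$ by bootstrap) on a smaller cylinder. The serious obstacle is exactly the one advertised in the introduction: because the LLS equation loses the exact scaling invariance of LLG, a direct localization in $P_r(z)$ produces different powers of $r$ on the two sides of the key energy inequality. I would circumvent this by first deriving the desired inequality for $\partial_t\boldsymbol{m}$ on all of $\mathbb{R}^{3}$, then testing against a cut-off $\eta$ supported in $P_r(z)$ while keeping the equation global; this allows a Fourier/Plancherel computation, after which a Gagliardo--Nirenberg-type interpolation between $\|\nabla\boldsymbol{m}\|_{M^{2,3}}$ and $\|\boldsymbol{m}-\boldsymbol{m}_{\infty}\|_{M^{2,3}}$ rebalances the powers of $r$. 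This is the step I expect to be the main technical obstacle, and it is the only place where the extra $(\boldsymbol{v}\cdot\nabla)\boldsymbol{m}$ fundamentally changes the LLG argument.

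Finally, once interior smoothness is established, I would run the higher-order Morrey estimates: differentiating the covariant equation $k$ times and combining semigroup estimates with the $L^{\infty}$-bound on $\nabla\boldsymbol{m}$ already obtained gives $\boldsymbol{m}-\boldsymbol{m}_{\infty}\in C^{0}([0,\infty);M^{2,3,k})$ for every $k$, and instantaneous smoothing yields $C^{0}((0,\infty);M^{2,3,\infty})$. Uniqueness and stability follow from the contraction estimate in Step 2 applied to the difference of two solutions, using again the smallness of $\rho$ to absorb the $\boldsymbol{v}$-commutator terms. The net result is exactly the statement of Theorem~\ref{main-th}.
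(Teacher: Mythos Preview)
Your proposal captures the main ingredients --- moving frames with Coulomb gauge, dissipative Schr\"odinger semigroup estimates in Morrey spaces, the Fourier/Plancherel rebalancing trick for the localized $\partial_t\boldsymbol{m}$-estimate, and a parabolic $\varepsilon$-regularity bootstrap --- and your Step~3 matches the paper's Section~6.2 almost exactly. The logical organization, however, differs from the paper's in one important respect.

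The paper does \emph{not} run a fixed-point/contraction argument on the frame variable $q$. Instead it (i) approximates the rough initial data $\boldsymbol{m}_0$ by smooth data $\boldsymbol{m}_0^k\in M^{2,3,\infty}_*$ (Lemma~\ref{lemma appro}), (ii) obtains local classical solutions $\boldsymbol{m}^k$ from a standard local-existence result plus the higher-order energy estimate (Lemma~\ref{higher-energy-es}, Proposition~\ref{global-smooth-solu}), (iii) applies the Duhamel estimates of Section~\ref{Sec5} to the frame variables of these \emph{already existing} smooth solutions to obtain the a~priori bound $K(t)\le 2K_0(t)$ via a continuity argument (Proposition~\ref{pro3.1}, not a contraction), (iv) passes to a weak limit $\boldsymbol{m}$ using the stability estimate of Lemma~\ref{Lemma1.4}, and (v) upgrades regularity by the $\varepsilon$-regularity machinery. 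Your Step~2, taken literally, would run a contraction on $q$ alone; this is delicate because the frame $\{e_1,e_2\}$ and the connection one-form are only defined once a sufficiently regular map $\boldsymbol{m}$ already exists, so the covariant equation is not an autonomous fixed-point problem for $q$. The paper's approximation-plus-continuity route sidesteps this circularity. If you reinterpret your Step~2 as a Duhamel-based a~priori estimate on classical solutions (which is what the inequalities you describe actually deliver), your outline and the paper's coincide --- but you should then insert the smooth-data approximation step before invoking the frame equation.

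One minor further discrepancy: the higher-order persistence in your Step~4 is obtained in the paper by energy estimates on the original equation (Lemma~\ref{higher-energy-es}), not by differentiating the covariant equation; either route works, but the energy route avoids re-estimating the gauge terms at each order.
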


Notice that there are two main tasks we need to address, one is to extend local smooth solutions and the other is to improve the regularity of weak solutions. We first establish the higher order energy estimate in Lemma \ref{higher-energy-es}, which is necessary for getting the local existence of the classical solution with the initial data $\boldsymbol{m}_{0}\in M^{2,3,\sigma}_{\ast}(\mathbb{R}^{3})(k\geq3)$ (see Proposition \ref{global-smooth-solu}). Observing that the boundedness of $\|\nabla\boldsymbol{m}\|_{L^{\infty}(\mathbb{R}^{3})}$ is vital for extending the local solutions to global solutions. In order to deal with the complicated nonlinear parts, we establish the covariant Landau-Lifshitz-Slonczewski equation \eqref{covariant-main-eq} in Proposition \ref{pro2.1} by using moving frames. Subsequently, we estimate the linear and nonlinear estimates in Lemma \ref{Lemma3.1}--Lemma \ref{Lemma3.4} for getting the boundedness of $\|\nabla \boldsymbol{m}\|_{L^{\infty}(\mathbb{R}^{3})}$ (see Lemma \ref{Lemma3.6}). So far we solve the first problem, and then consider the second problem. When the initial data $\boldsymbol{m}_{0}$ belongs to $ M^{2,3,1}_{\ast}(\mathbb{R}^{3})$, namely the regularity of initial data are insufficiently well, we first utilize an approximation result to get the existence of global weak solutions (see subsection \ref{Sec6.1}). Furthermore, we apply the regularity theory to improve the regularity of weak solutions. At this point, our goal is proving $\boldsymbol{m}\in C^{0,\alpha}(P_{r}(z))$ by the relationship between the H\"{o}lder space and the BMO space (see Lemma \ref{Lpa}) and verifying a small energy condition by Lemma \ref{E(z,r)}. Finally, we complete the proof of the main theorem by using Theorem \ref{holder space}.

This paper is arranged as follows. In Section \ref{Sec2}, we introduce our main function space and recall some useful lemmas.
In Section \ref{Sec3}, we show the higher order energy estimates and get the uniqueness of solutions. In Section \ref{Sec4}, we investigate the covariant Landau-Lifshitz-Slonczewski equation \eqref{covariant-main-eq} in Proposition \ref{pro2.1} by using moving frames. In Section \ref{Sec5}, we establish the linear and nonlinear estimates to get the boundedness of $\|\nabla \boldsymbol{m}\|_{L^{\infty}(\mathbb{R}^{3})}$. We show some important conclusions and then devote to giving the proof of our main results (see Proposition \ref{holder space}) in the last section.

\section{Definition and notation}\label{Sec2}
In this section, we recall some definitions and useful lemmas.
\begin{definition}[Morrey space]
For any $1\leq p<+\infty,~0\leq q\leq n$, the Morrey space $M^{p,q}(\mathbb{R}^{n})$ is defined by
\begin{align*}
M^{p,q}(\mathbb{R}^{n}):=\left\{f\in L_{\mathrm{loc}}^{p}(\mathbb{R}^{n});
\|f\|_{M^{p,q}(\mathbb{R}^{n})}^{p}
=\sup _{x \in \mathbb{R}^{n},0<r<+\infty}r^{q-n}\int_{B_{r}(x)}|f(y)|^{p}dy<+\infty\right\}.
\end{align*}
\end{definition}
Similar to the Sobolev space, we can also define the weak derivative in the Morrey space, and then by the fact that $\boldsymbol{m}\times\Delta\boldsymbol{m}=\nabla\cdot(\boldsymbol{m}\times\nabla\boldsymbol{m})$, we give the following definition.
\begin{definition}\label{def-weak-solu}
For a given $\boldsymbol{v}$ and $T<+\infty$, assume that
\begin{align*}
\boldsymbol{m}\in L^{\infty}\left((0, T);M^{2,3,1}_{\ast}(\mathbb{R}^{3};\mathbb{S}^{2})\right),\quad
\frac{\partial\boldsymbol{m}}{\partial t}\in M^{2,3}\left((0,T);M^{2,3}(\mathbb{R}^{3};\mathbb{R}^{3})\right).
\end{align*}
If $\boldsymbol{m}$ satisfies \eqref{main-eq} in the sense of distribution, that is,
\begin{align*}
&\left\langle\frac{\partial\boldsymbol{m}}{\partial t},\Phi\right\rangle_{M^{2,3}(\mathbb{R}^{3})}
+\langle(\boldsymbol{v}\cdot\nabla)\boldsymbol{m},\Phi\rangle_{M^{2,3}(\mathbb{R}^{3})}
+\langle\boldsymbol{m}\times(\boldsymbol{v}\cdot\nabla)\boldsymbol{m},\Phi\rangle_{M^{2,3}(\mathbb{R}^{3})}
+\varepsilon\langle\nabla \boldsymbol{m},\nabla \Phi\rangle_{M^{2,3}(\mathbb{R}^{3})}\\
&=\langle\boldsymbol{m} \times \nabla\boldsymbol{m}, \nabla \Phi\rangle_{M^{2,3}(\mathbb{R}^{3})}
+\varepsilon\langle|\nabla \boldsymbol{m}|^{2}\boldsymbol{m}, \Phi\rangle_{M^{2,3}(\mathbb{R}^{3})},
\end{align*}
for any $t\in(0,T)$ and for any  $\Phi\in M^{2,3,1}(\mathbb{R}^{3})\cap L^{\infty}(\mathbb{R}^{3};\mathbb{R}^{3})$, we say that $\boldsymbol{m}$ is a weak solution of \eqref{main-eq}.
\end{definition}

\begin{definition}
For any $1\leq p<+\infty,~0\leq q\leq n$,
\begin{align*}
M^{p,q,k}\left(\mathbb{R}^{n}\right):=\left\{\nabla^k f\in M^{p,q}\left(\mathbb{R}^{n}\right)~|~k\geq 1\right\}.
\end{align*}
\end{definition}
Besides, for any $1\leq p<+\infty,~0\leq q\leq n$ and $k\geq 1$, we write:
\begin{align*}
M^{p,q,k}_{\ast}\left(\mathbb{R}^{3};\mathbb{S}^{2}\right):=\left\{\boldsymbol{m}: \mathbb{R}^{3} \rightarrow \mathbb{S}^{2}~|~
\nabla^k({\boldsymbol{m}-\boldsymbol{m}_{\infty}})\in M^{p,q}\left(\mathbb{R}^{3};\mathbb{R}^{3}\right)\right\}.
\end{align*}

Let $d(x,y):\mathbb{R}^{n} \times \mathbb{R}^{n}\rightarrow \mathbb{R}_{+}$ represents the distance in $\mathbb{R}^{n}$ and $\tilde{n}$ denotes the Hausdorff measure with respect to $d$. We define the Riesz potential as follows.
\begin{definition}
For any $f(x)\in M^{p,n}(\mathbb{R}^{n}),\;1\leq p\leq+\infty$ and $0<\alpha\leq\tilde{n}$, then the $\alpha$-order Riesz potential is defined by
\begin{align*}
I_{\alpha}(x)=\int_{\mathbb{R}^{n}}\frac{|f(y)|}{d(x,y)^{\tilde{n}-\alpha}}dy,\quad x\in \mathbb{R}^{n},
\end{align*}
where $d(x,y)=|x-y|$.
\end{definition}

\begin{lemma}[\cite{Lin}]\label{Lemma1.1}
For any $0<\alpha\leq \tilde{n},~0<q\leq n,~1<p<\frac{q}{\alpha}$, if $f(x)\in M^{p,n}(\mathbb{R}^{n})\cap M^{p,q}(\mathbb{R}^{n})$,
then $I_\alpha(f)\in M^{\tilde{p},n}(\mathbb{R}^{n})\cap M^{\tilde{p},q}(\mathbb{R}^{n})$ and satisfies
\begin{align*}
\|I_\alpha(f)\|_{M^{\tilde{p},q}(\mathbb{R}^{n})}\leq \|f\|_{M^{p,q}(\mathbb{R}^{n})},
\end{align*}
where $\tilde{p}=\frac{pq}{q-p\alpha}$.
\end{lemma}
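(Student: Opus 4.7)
The plan is to prove the Morrey-space boundedness of the Riesz potential via the classical Hedberg pointwise trick combined with the Chiarenza--Frasca bound for the Hardy--Littlewood maximal function on Morrey spaces. The target estimate is an Adams-type improvement of the Sobolev embedding, where integrability gains from the regularizing effect of the kernel $|x-y|^{-(n-\alpha)}$.

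First I would split $I_\alpha f(x)$ at a scale $\delta>0$ into a near part $N_\delta(x)=\int_{|x-y|<\delta}|f(y)|/|x-y|^{n-\alpha}\,dy$ and a far part $F_\delta(x)=\int_{|x-y|\ge\delta}|f(y)|/|x-y|^{n-\alpha}\,dy$, and estimate each via dyadic annuli. On the near side, grouping into $A_j=\{2^{-j-1}\delta\le|x-y|<2^{-j}\delta\}$ and bounding $|x-y|^{-(n-\alpha)}$ by its maximum on $A_j$, one pulls out the average $|B_{2^{-j}\delta}|^{-1}\int_{B_{2^{-j}\delta}(x)}|f|$ and absorbs it by the maximal function $Mf(x)$; summing the geometric series in $2^{-j\alpha}$ (convergent because $\alpha>0$) gives $N_\delta(x)\lesssim \delta^\alpha Mf(x)$. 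On the far side, the Morrey definition yields $\int_{B_r}|f|\le\|f\|_{M^{p,q}(\mathbb R^n)}\,r^{n-q/p}$ via Hölder, so $F_\delta(x)\lesssim \|f\|_{M^{p,q}(\mathbb R^n)}\sum_{j\ge0}(2^j\delta)^{\alpha-q/p}$, which converges precisely because $p<q/\alpha$ forces $\alpha-q/p<0$, and the sum collapses to $\delta^{\alpha-q/p}\|f\|_{M^{p,q}(\mathbb R^n)}$.

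Next I would optimize in $\delta$ by equating the two estimates, which gives the Hedberg-type pointwise bound
\begin{equation*}
|I_\alpha f(x)|\le C\bigl(Mf(x)\bigr)^{1-\alpha p/q}\,\|f\|_{M^{p,q}(\mathbb R^n)}^{\alpha p/q}.
\end{equation*}
With $\tilde p=pq/(q-p\alpha)$ one verifies $\tilde p(1-\alpha p/q)=p$, so raising to the $\tilde p$-th power and integrating over an arbitrary ball $B_r(x_0)$ converts the $L^{\tilde p}$-integral of $I_\alpha f$ into the $L^p$-integral of $Mf$ times a power of $\|f\|_{M^{p,q}(\mathbb R^n)}$. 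Finally I invoke the Chiarenza--Frasca theorem that the Hardy--Littlewood maximal operator is bounded on $M^{p,q}(\mathbb R^n)$ for $p>1$, that is, $\|Mf\|_{M^{p,q}(\mathbb R^n)}\le C\|f\|_{M^{p,q}(\mathbb R^n)}$; applied inside $B_r(x_0)$ this gives $\int_{B_r(x_0)}(Mf)^p\le C\|f\|_{M^{p,q}(\mathbb R^n)}^p r^{n-q}$, and after combining with the pointwise bound the exponents rearrange so that the total estimate becomes $r^{q-n}\int_{B_r(x_0)}|I_\alpha f|^{\tilde p}\le C\|f\|_{M^{p,q}(\mathbb R^n)}^{\tilde p}$, which is exactly the desired $M^{\tilde p,q}$-bound; the statement for $q=n$ follows by specialization.

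The main obstacle I expect is not the dyadic bookkeeping but rather justifying the maximal-function step in the Morrey setting: one needs the hypothesis $p>1$ (implicit in $1<p<q/\alpha$) to apply the strong $(p,p)$ bound of $M$, and one needs the Morrey version of this bound. If a self-contained argument is preferred over citing Chiarenza--Frasca, I would instead avoid the maximal function by choosing $\delta$ depending on $x$ through an $L^p$-average quantity $m_r(x)=\bigl(r^{-n}\int_{B_r(x)}|f|^p\bigr)^{1/p}$ and using the Fefferman--Stein-style estimate $|B_r(x_0)|^{-1}\int_{B_r(x_0)} m_{2r}^{\,p}\le C\|f\|_{M^{p,q}}^p r^{-q}/|B_r|^{-1}$ directly; this yields the same conclusion without appealing to $M$.
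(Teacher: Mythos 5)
The paper does not prove this lemma; it is quoted from \cite{Lin}, which in turn rests on Adams' classical theorem on Riesz potentials in Morrey spaces. Your argument is precisely the standard Hedberg--Adams proof of that theorem, and it is correct: the near/far splitting with dyadic annuli, the convergence of the far sum exactly under $p<q/\alpha$, the optimization in $\delta$ giving $|I_\alpha f(x)|\le C\,(Mf(x))^{1-\alpha p/q}\|f\|_{M^{p,q}}^{\alpha p/q}$, the identity $\tilde p\,(1-\alpha p/q)=p$, and the Morrey boundedness of $M$ (Chiarenza--Frasca) all fit together as you describe. Two minor remarks. First, the inequality as printed in the paper has no constant on the right-hand side; a constant is unavoidable, and the paper simply suppresses constants throughout, so your $C$ is not a defect. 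Second, the membership $I_\alpha f\in M^{\tilde p,n}=L^{\tilde p}$ with the \emph{same} exponent $\tilde p=pq/(q-p\alpha)$ is not obtained by ``specializing to $q=n$'' (that would change $\tilde p$); rather it follows from integrating your Hedberg bound over all of $\mathbb{R}^{n}$ and using the strong $(p,p)$ maximal inequality together with the hypothesis $f\in M^{p,n}=L^{p}$, which is exactly why that extra hypothesis appears in the statement. With that small correction your proof is complete and is essentially the proof in the cited literature.
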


Throughout this paper, let $S(t)$ represent the semigroup generated by the operator $(\varepsilon-i)\Delta$. In the following, we give some basic estimates of $S(t)f$ in the Morrey space when $f\in M^{p.q}(\mathbb{R}^{n})$.
\begin{lemma}[\cite{Lin}]\label{Lemma1.2}
$(1)$~If $f\in M^{p,q}(\mathbb{R}^{n}),~1<p<+\infty,~0\leq q\leq n$, then $S(t)f\in M^{p,q}(\mathbb{R}^{n})$ and satisfies
\begin{align*}
\|S(t)f\|_{M^{p,q}(\mathbb{R}^{n})}&\leq C\|f\|_{M^{p,q}(\mathbb{R}^{n})},\\
\|\nabla(S(t)f)\|_{M^{p,q}(\mathbb{R}^{n})}&\leq C t^{-\frac{1}{2}}\|f\|_{M^{p,q}(\mathbb{R}^{n})}.
\end{align*}
$(2)$~If $f\in M^{p,q}(\mathbb{R}^{n}),~1<p<+\infty,~0\leq q\leq n$, then $S(t)f\in M^{\tilde{p},q}(\mathbb{R}^{n})$ and satisfies
\begin{align*}
\|S(t)f\|_{M^{\tilde{p},q}(\mathbb{R}^{n})}
&\leq C t^{-\frac{q}{2}(\frac{1}{p}-\frac{1}{\tilde{p}})}\|f\|_{M^{p,q}(\mathbb{R}^{n})},\\
\|\nabla(S(t)f)\|_{M^{\tilde{p},q}(\mathbb{R}^{n})}
&\leq Ct^{-\frac{1}{2}-\frac{q}{2}(\frac{1}{p}-\frac{1}{\tilde{p}})}\|f\|_{M^{p,q}(\mathbb{R}^{n})},
\end{align*}
where $S(t)$ denotes the semigroup, the constant $C>0$ is independent of $t$ and $\tilde{p}\in [p,~p(n+1)]$.
\end{lemma}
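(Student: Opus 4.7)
The plan is to reduce both claims to pointwise bounds on the convolution kernel of $S(t)$ together with an application of Lemma \ref{Lemma1.1}. Since $\varepsilon>0$, the operator $(\varepsilon-i)\Delta$ is sectorial with dissipative real part, so $S(t)$ acts on Schwartz functions as convolution against the complex Gaussian
\[ K_{t}(x)=\frac{1}{(4\pi(\varepsilon-i)t)^{n/2}}\exp\!\left(-\frac{|x|^{2}}{4(\varepsilon-i)t}\right). \]
Taking the modulus and using $\mathrm{Re}\,(\varepsilon-i)^{-1}=\varepsilon/(1+\varepsilon^{2})>0$, one obtains the Gaussian bounds
\[ |K_{t}(x)|\leq C_{\varepsilon}\,t^{-n/2}\,e^{-c_{\varepsilon}|x|^{2}/t},\qquad |\nabla K_{t}(x)|\leq C_{\varepsilon}\,t^{-(n+1)/2}\,e^{-c_{\varepsilon}|x|^{2}/t}, \]
with constants depending only on $\varepsilon$.

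For part (1), I would write $K_{t}(x)=t^{-n/2}G_{\varepsilon}(x/\sqrt{t})$ with $G_{\varepsilon}\in L^{1}(\mathbb{R}^{n})$, so that the change of variables gives
\[ S(t)f(x)=\int_{\mathbb{R}^{n}}G_{\varepsilon}(z)\,f(x-\sqrt{t}\,z)\,dz. \]
Since the Morrey norm $\|\cdot\|_{M^{p,q}}$ is translation invariant, Minkowski's inequality in the $M^{p,q}$-norm yields $\|S(t)f\|_{M^{p,q}}\leq\|G_{\varepsilon}\|_{L^{1}}\|f\|_{M^{p,q}}$. The gradient estimate is identical after observing that $\sqrt{t}\,\nabla K_{t}(x)=t^{-n/2}(\nabla G_{\varepsilon})(x/\sqrt{t})$ with $\nabla G_{\varepsilon}\in L^{1}$, which produces the stated $t^{-1/2}$ factor.

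For part (2), set $\alpha:=q(\tfrac{1}{p}-\tfrac{1}{\tilde{p}})$, so that the relation $\tilde{p}=\tfrac{pq}{q-p\alpha}$ matches the conclusion of Lemma \ref{Lemma1.1}. The elementary inequality $u^{n-\alpha}e^{-c u^{2}}\leq C$ applied with $u=|x|/\sqrt{t}$ gives
\[ |K_{t}(x)|\leq\frac{C}{t^{\alpha/2}|x|^{n-\alpha}},\qquad |\nabla K_{t}(x)|\leq\frac{C}{t^{(\alpha+1)/2}|x|^{n-\alpha}}, \]
so that $|S(t)f(x)|\leq Ct^{-\alpha/2}I_{\alpha}(|f|)(x)$ and $|\nabla S(t)f(x)|\leq Ct^{-(\alpha+1)/2}I_{\alpha}(|f|)(x)$. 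The range $\tilde{p}\in[p,p(n+1)]$ corresponds to $\alpha\in[0,\tfrac{nq}{p(n+1)}]\subset[0,q/p)$, so the hypothesis $1<p<q/\alpha$ of Lemma \ref{Lemma1.1} is met; applying it yields the two desired inequalities with the precise exponent $-\tfrac{q}{2}(\tfrac{1}{p}-\tfrac{1}{\tilde{p}})$ in $t$ (and an additional $-\tfrac{1}{2}$ in the gradient case).

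The only delicate point is guaranteeing the Morrey-scaling exponent $q$ rather than the dimensional exponent $n$ in the $t$-factor; this is exactly what the pointwise reduction to a Riesz potential of order $\alpha<q/p$ achieves, since the Morrey version of the Hardy–Littlewood–Sobolev inequality recorded in Lemma \ref{Lemma1.1} is normalized by $q$. A minor technicality is the hypothesis $f\in M^{p,n}\cap M^{p,q}$ in Lemma \ref{Lemma1.1}; this can be bypassed by a standard truncation-and-density argument reducing to compactly supported $f$, for which both Morrey norms are automatically finite, and then passing to the limit using the continuity of $S(t)$ on $M^{p,q}$ obtained in part (1).
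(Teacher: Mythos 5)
The paper offers no proof of this lemma at all: it is imported verbatim from the cited reference \cite{Lin}, so there is nothing internal to compare against. Your argument is essentially the standard (and correct) derivation of such estimates: the complex Gaussian kernel of $e^{t(\varepsilon-i)\Delta}$ has genuine Gaussian decay because $\mathrm{Re}\,(\varepsilon-i)^{-1}=\varepsilon/(1+\varepsilon^2)>0$, part (1) follows from translation invariance of the Morrey norm plus Minkowski's integral inequality, and part (2) follows from the pointwise domination $|S(t)f|\leq Ct^{-\alpha/2}I_{\alpha}(|f|)$ with $\alpha=q(\tfrac1p-\tfrac1{\tilde p})$ combined with the Morrey--Adams estimate recorded as Lemma \ref{Lemma1.1}; your verification that $\tilde p\leq p(n+1)$ forces $\alpha<q/p$ is exactly the right consistency check. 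The only soft spot is the final sentence: compactly supported functions are \emph{not} dense in Morrey spaces, so one cannot literally ``pass to the limit'' in norm. This is harmless, though — apply Lemma \ref{Lemma1.1} to $f\chi_{B_R}$, note $I_\alpha(|f\chi_{B_R}|)\nearrow I_\alpha(|f|)$ pointwise, and conclude by monotone convergence inside the supremum defining the $M^{\tilde p,q}$-norm, using that the bound $\|f\chi_{B_R}\|_{M^{p,q}}\leq\|f\|_{M^{p,q}}$ is uniform in $R$. With that adjustment the proof is complete.
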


\section{Higher order energy estimate and Uniqueness}\label{Sec3}
In this section, we establish the higher order energy estimate which is essential for proving the local existence of classical solutions and showing the uniqueness of the solution.
Since $|\boldsymbol{m}|=1$, adding $\boldsymbol{m}\times \eqref{main-eq}$ into $\varepsilon\cdot$\eqref{main-eq} and omitting some coefficients for simplicity, one has
\begin{align*}
\frac{\partial\boldsymbol{m}}{\partial t}+\boldsymbol{m}\times \frac{\partial\boldsymbol{m}}{\partial t}
+(\boldsymbol{v}\cdot\nabla)\boldsymbol{m}+\boldsymbol{m}\times(\boldsymbol{v}\cdot\nabla)\boldsymbol{m}
=\varepsilon(\Delta\boldsymbol{m}+|\nabla\boldsymbol{m}|^{2}\boldsymbol{m}),
\end{align*}
and then we can easily deduce the following energy equality:
\begin{align*}
\varepsilon\frac{\partial}{\partial t}E(\boldsymbol{m})
+\int_{\mathbb{R}^{n}}\left|\frac{\partial \boldsymbol{m}}{\partial t}\right|^{2} dx
+\int_{\mathbb{R}^{n}}(\boldsymbol{v}\cdot\nabla)\boldsymbol{m}\frac{\partial\boldsymbol{m}}{\partial t}dx
+\int_{\mathbb{R}^{n}}\boldsymbol{m}\times(\boldsymbol{v}\cdot\nabla)\boldsymbol{m}\frac{\partial\boldsymbol{m}}{\partial t}dx
=0.
\end{align*}

\begin{lemma}\label{higher-energy-es}
Suppose that $\boldsymbol{m}\in C^{0}\left((0,T);M^{2,3,\infty}_{\ast}(\mathbb{R}^{3})\right)$ is the smooth solution of \eqref{main-eq}, for any $t\in(0,T)$ and positive integer $k\geq3$, we have
\begin{align}\label{high-en-es}
\|\nabla \boldsymbol{m}(t)\|^2_{M^{2,3,k-1}(\mathbb{R}^{3})}
\leq e^{C(t)}\|\nabla \boldsymbol{m}(0)\|^2_{M^{2,3,k-1}(\mathbb{R}^{3})},
\end{align}
where
\begin{align*}
C(t)&=\int_{0}^{t}\|\boldsymbol{v}\|^2_{L^{\infty}(\mathbb{R}^{3})}(2+\|\nabla \boldsymbol{m}\|^2_{L^{\infty}(\mathbb{R}^{3})})
+\|\nabla \boldsymbol{m}\|^2_{L^{\infty}(\mathbb{R}^{3})}ds.
\end{align*}
\end{lemma}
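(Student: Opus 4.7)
The plan is a standard higher-order energy estimate: differentiate the equation $k$ times in space, pair with $\nabla^{k}\boldsymbol{m}$ in a localized $L^{2}$ sense, collect the Morrey supremum, and close with Gr\"onwall. I would work from the form
\begin{align*}
\partial_{t}\boldsymbol{m}+(\boldsymbol{v}\cdot\nabla)\boldsymbol{m}+\boldsymbol{m}\times(\boldsymbol{v}\cdot\nabla)\boldsymbol{m}
=-\boldsymbol{m}\times\Delta\boldsymbol{m}+\varepsilon\Delta\boldsymbol{m}+\varepsilon|\nabla\boldsymbol{m}|^{2}\boldsymbol{m}
\end{align*}
rather than from the symmetrized version, because the extra dissipative term $\varepsilon\Delta\nabla^{k}\boldsymbol{m}$ then sits cleanly on the right-hand side. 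The hypothesis $\boldsymbol{m}\in C^{0}((0,T);M^{2,3,\infty}_{\ast})$ guarantees that every integral we write down is finite and that the spatial integrations by parts (with smooth cutoffs exhausting $\mathbb{R}^{3}$) are justified.

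Applying a multiindex $\partial^{\alpha}$ with $|\alpha|\le k$, pairing with $\partial^{\alpha}\boldsymbol{m}$ in $L^{2}(B_{r}(x_{0}))$ and multiplying by $r^{q-n}=r^{0}$ produces, on the left, $\tfrac{1}{2}\partial_{t}\|\partial^{\alpha}\boldsymbol{m}\|_{L^{2}(B_{r})}^{2}$ plus the Gilbert dissipation $\varepsilon\|\nabla\partial^{\alpha}\boldsymbol{m}\|_{L^{2}(B_{r})}^{2}$. The leading cross-product $\langle\boldsymbol{m}\times\Delta\partial^{\alpha}\boldsymbol{m},\partial^{\alpha}\boldsymbol{m}\rangle$ vanishes after one integration by parts by the antisymmetry of $\boldsymbol{m}\times\cdot$, and $\langle(\boldsymbol{v}\cdot\nabla)\partial^{\alpha}\boldsymbol{m},\partial^{\alpha}\boldsymbol{m}\rangle$ is controlled by $\|\boldsymbol{v}\|_{L^{\infty}}^{2}\|\partial^{\alpha}\boldsymbol{m}\|_{L^{2}}^{2}$ after integration by parts. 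The genuinely nonlinear contributions come from the Leibniz commutators
\begin{align*}
[\partial^{\alpha},\boldsymbol{m}\times]\Delta\boldsymbol{m},\qquad
[\partial^{\alpha},\boldsymbol{v}\cdot\nabla]\boldsymbol{m},\qquad
[\partial^{\alpha},\boldsymbol{m}\times](\boldsymbol{v}\cdot\nabla)\boldsymbol{m},\qquad
\partial^{\alpha}(|\nabla\boldsymbol{m}|^{2}\boldsymbol{m}),
\end{align*}
each of which I would bound by H\"older with one factor (among $\boldsymbol{v}$, $\boldsymbol{m}$, $\nabla\boldsymbol{m}$) placed in $L^{\infty}$, the top-order derivative placed in $L^{2}(B_{r})$, and any intermediate $\nabla^{j}\boldsymbol{m}$ ($1<j<k$) handled by a Gagliardo--Nirenberg-type interpolation in Morrey spaces (possible since $k\ge3$). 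Any gradient loss at the top order is absorbed into $\varepsilon\|\nabla\partial^{\alpha}\boldsymbol{m}\|_{L^{2}(B_{r})}^{2}$ by Young's inequality. Summing over $|\alpha|\le k$ and taking the supremum over $(x_{0},r)$ turns this into the scalar differential inequality
\begin{align*}
\frac{d}{dt}\|\nabla\boldsymbol{m}\|_{M^{2,3,k-1}}^{2}
\le\Bigl(\|\boldsymbol{v}\|_{L^{\infty}}^{2}\bigl(2+\|\nabla\boldsymbol{m}\|_{L^{\infty}}^{2}\bigr)+\|\nabla\boldsymbol{m}\|_{L^{\infty}}^{2}\Bigr)\|\nabla\boldsymbol{m}\|_{M^{2,3,k-1}}^{2},
\end{align*}
and Gr\"onwall's inequality yields \eqref{high-en-es}.

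The main obstacle is bookkeeping: I need each Leibniz term to land with the precise coefficients appearing in $C(t)$ and not a generic polynomial in $\|\boldsymbol{v}\|_{L^{\infty}}$ and $\|\nabla\boldsymbol{m}\|_{L^{\infty}}$, so the $L^{\infty}/L^{2}$ split in every product must be chosen judiciously (in particular, the factor $\|\boldsymbol{v}\|_{L^{\infty}}^{2}\|\nabla\boldsymbol{m}\|_{L^{\infty}}^{2}$ must come only from the mixed Leibniz term $(\partial^{\alpha-\beta}\boldsymbol{m})\times(\boldsymbol{v}\cdot\nabla\partial^{\beta}\boldsymbol{m})$ with $|\beta|=|\alpha|-1$). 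A secondary delicacy is that $M^{2,3}(\mathbb{R}^{3})$ is scale-invariant, so intermediate derivatives cannot be interpolated by Sobolev embedding alone; I would rely on Lemma \ref{Lemma1.1} and on the cascade of Morrey norms $\|\nabla^{j}\boldsymbol{m}\|_{M^{2,3}}$ for $1\le j\le k$ that the hypothesis $\boldsymbol{m}\in M^{2,3,\infty}_{\ast}$ already provides.
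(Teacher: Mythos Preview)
Your proposal is correct and follows essentially the same strategy as the paper: differentiate to order $k$, pair with $\partial^{k}\boldsymbol{m}$, let the top-order cross term $\langle\boldsymbol{m}\times\nabla\partial^{k}\boldsymbol{m},\partial^{k}\nabla\boldsymbol{m}\rangle$ vanish by antisymmetry, absorb the stray top-order pieces into the Gilbert dissipation $\varepsilon\|\nabla\boldsymbol{m}\|_{M^{2,3,k}}^{2}$ via Young, and close with Gr\"onwall. The only cosmetic differences are that the paper (i) integrates the transport term by parts once to throw one derivative onto the test function (yielding $\|\boldsymbol{v}\|_{L^{\infty}}\|\nabla\boldsymbol{m}\|_{M^{2,3,k-1}}\|\nabla\boldsymbol{m}\|_{M^{2,3,k}}$, then Young) rather than using the divergence structure you propose, and (ii) names the commutator bound the \emph{Moser inequality} (the tame product estimate $\|\partial^{k-1}(fg)-f\partial^{k-1}g\|\lesssim\|\nabla f\|_{L^{\infty}}\|g\|_{k-2}+\|g\|_{L^{\infty}}\|\nabla f\|_{k-2}$) rather than Gagliardo--Nirenberg interpolation; this dispels your secondary concern about scale-invariance, since no Sobolev embedding is invoked. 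Your worry about matching the exact form of $C(t)$ is unnecessary: the paper's proof also carries unspecified constants $C(\delta)$, so the displayed $C(t)$ is schematic.
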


\begin{proof}
Multiplying \eqref{main-eq} by $\partial^k \boldsymbol{m}$ with a given integer $k$, it holds that
\begin{align}\label{high1.0}
\begin{split}
\langle\partial^k(\frac{\partial\boldsymbol{m}}{\partial t}),\partial^{k}\boldsymbol{m}\rangle
&=-\langle\partial^k(\boldsymbol{v}\cdot\nabla)\boldsymbol{m},\partial^{k}\boldsymbol{m}\rangle
-\langle\partial^k\left(\boldsymbol{m}\times(\boldsymbol{v}\cdot\nabla)\boldsymbol{m}\right),\partial^{k}\boldsymbol{m}\rangle\\
&\quad-\langle\partial^k(\boldsymbol{m}\times\Delta \boldsymbol{m}),\partial^{k}\boldsymbol{m}\rangle
+\varepsilon\langle\partial^k(\Delta\boldsymbol{m}+|\nabla\boldsymbol{m}|^{2}\boldsymbol{m}),\partial^{k}\boldsymbol{m}\rangle.
\end{split}
\end{align}
We first consider the term on the left-hand side of \eqref{high1.0}, and it is easy to get
\begin{align}\label{high1.3}
\langle\partial^k(\frac{\partial\boldsymbol{m}}{\partial t}),\partial^{k}\boldsymbol{m}\rangle
=\frac{d}{dt}\|\partial^{k}\boldsymbol{m}\|^{2}_{M^{2,3}}
=\frac{d}{dt}\|\partial^{k-1}\nabla\boldsymbol{m}\|^{2}_{M^{2,3}}.
\end{align}
Next, we estimate the first term on the right-hand side of \eqref{high1.0}. Using the integration by parts and H\"{o}lder's inequality, we have
\begin{align*}
|\langle\partial^k(\boldsymbol{v}\cdot\nabla)\boldsymbol{m},\partial^{k}\boldsymbol{m}\rangle|
&= |\langle\partial^{k-1}(\boldsymbol{v}\cdot\nabla)\boldsymbol{m},\partial^{k} \nabla\boldsymbol{m}\rangle| \\
&\leq \|(\boldsymbol{v}\cdot\nabla)\boldsymbol{m}\|_{M^{2,3,k-1}(\mathbb{R}^{3})} \cdot \|\nabla\boldsymbol{m}\|_{M^{2,3,k}(\mathbb{R}^{3})}\\
&\leq\|\boldsymbol{v}\|_{L^{\infty}(\mathbb{R}^{3})}\|\nabla\boldsymbol{m}\|_{M^{2,3,k-1}(\mathbb{R}^{3})}
\|\nabla\boldsymbol{m}\|_{M^{2,3,k}(\mathbb{R}^{3})}.
\end{align*}
Similarly, one has
\begin{align*}
|\langle\partial^k\left(\boldsymbol{m}\times(\boldsymbol{v}\cdot\nabla)\boldsymbol{m}\right),\partial^{k}\boldsymbol{m}\rangle|
\leq \|\partial^{k-1}\left(\boldsymbol{m}\times(\boldsymbol{v}\cdot\nabla)\boldsymbol{m}\right)\|_{M^{2,3}(\mathbb{R}^{3})}
\|\nabla \boldsymbol{m}\|_{M^{2,3,k}(\mathbb{R}^{3})}.
\end{align*}
Notice that
\begin{align*}
\partial^{k-1}\left(\boldsymbol{m}\times(\boldsymbol{v}\cdot\nabla)\boldsymbol{m}\right)
=\boldsymbol{m}\times \partial^{k-1}(\boldsymbol{v}\cdot\nabla)\boldsymbol{m}+Q_1.
\end{align*}
By the fact that the term $Q_1$ only contains $\partial^\alpha\boldsymbol{m}$ and $\partial^{\beta} (\boldsymbol{v}\cdot\nabla)\boldsymbol{m}$, where $1\leq\alpha\leq k-1,~0\leq\beta\leq k-2$, namely $Q_1$ has no absolute term $\boldsymbol{m}$, and then applying the Moser inequality \cite[p.9]{Taylor}, one has
\begin{align*}
\|Q_1\|_{M^{2,3}(\mathbb{R}^{3})}
&\leq \|\nabla\boldsymbol{m}\|_{L^{\infty}(\mathbb{R}^{3})} \|(\boldsymbol{v}\cdot\nabla)\boldsymbol{m}\|_{M^{2,3,k-2}(\mathbb{R}^{3})}
+\|(\boldsymbol{v}\cdot\nabla)\boldsymbol{m}\|_{L^{\infty}(\mathbb{R}^{3})} \|\nabla\boldsymbol{m}\|_{M^{2,3,k-2}(\mathbb{R}^{3})}\\
&\leq \|\nabla\boldsymbol{m}\|_{L^{\infty}(\mathbb{R}^{3})} \|\boldsymbol{v}\|_{L^{\infty}(\mathbb{R}^{3})}
\|\nabla\boldsymbol{m}\|_{M^{2,3,k-2}(\mathbb{R}^{3})} \\
&\quad+\|\boldsymbol{v}\|_{L^{\infty}(\mathbb{R}^{3})} \|\nabla\boldsymbol{m}\|_{L^{\infty}(\mathbb{R}^{3})}
\|\nabla\boldsymbol{m}\|_{M^{2,3,k-2}(\mathbb{R}^{3})} \\
&\leq \|\boldsymbol{v}\|_{L^{\infty}(\mathbb{R}^{3})}\|\nabla\boldsymbol{m}\|_{L^{\infty}(\mathbb{R}^{3})}
\|\nabla\boldsymbol{m}\|_{M^{2,3,k-1}(\mathbb{R}^{3})}.
\end{align*}
Since $|\boldsymbol{m}|=1$, we can easily obtain
\begin{align*}
\|\boldsymbol{m}\times \partial^{k-1}(\boldsymbol{v}\cdot\nabla)\boldsymbol{m}\|_{M^{2,3}(\mathbb{R}^{3})}
&\leq \|(\boldsymbol{v}\cdot\nabla)\boldsymbol{m}\|_{M^{2,3,k-1}(\mathbb{R}^{3})} \\
&\leq \|\boldsymbol{v}\|_{L^{\infty}(\mathbb{R}^{3})} \|\nabla\boldsymbol{m}\|_{M^{2,3,k-1}(\mathbb{R}^{3})}.
\end{align*}
Combining the above arguments including $\boldsymbol{v}$, by Young's inequality, we have
\begin{align}\label{high1.5}
\begin{split}
&|\langle\partial^k(\boldsymbol{v}\cdot\nabla)\boldsymbol{m},\partial^{k}\boldsymbol{m}\rangle|
+|\langle\partial^k\left(\boldsymbol{m}\times(\boldsymbol{v}\cdot\nabla)\boldsymbol{m}\right),\partial^{k}\boldsymbol{m}\rangle| \\
&\leq \|\boldsymbol{v}\|_{L^{\infty}(\mathbb{R}^{3})}\|\nabla\boldsymbol{m}\|_{M^{2,3,k-1}(\mathbb{R}^{3})}
\|\nabla\boldsymbol{m}\|_{M^{2,3,k}(\mathbb{R}^{3})}+\|\nabla\boldsymbol{m}\|_{M^{2,3,k}(\mathbb{R}^{3})}\\
&\quad\left(\|\boldsymbol{v}\|_{L^{\infty}(\mathbb{R}^{3})}\|\nabla\boldsymbol{m}\|_{L^{\infty}(\mathbb{R}^{3})}
\|\nabla\boldsymbol{m}\|_{M^{2,3,k-1}(\mathbb{R}^{3})}+\|\boldsymbol{v}\|_{L^{\infty}(\mathbb{R}^{3})} \|\nabla\boldsymbol{m}\|_{M^{2,3,k-1}(\mathbb{R}^{3})}\right)\\
&\leq \delta\|\nabla \boldsymbol{m}\|^2_{M^{2,3,k}(\mathbb{R}^{3})}+C(\delta)\|\nabla \boldsymbol{m}\|^2_{M^{2,3,k-1}(\mathbb{R}^{3})} \|\boldsymbol{v}\|^2_{L^{\infty}(\mathbb{R}^{3})}
\left(2+\|\nabla \boldsymbol{m}\|^2_{L^{\infty}(\mathbb{R}^{3})}\right).
\end{split}
\end{align}
Similarly, we estimate the remaining terms of \eqref{high1.0} and it holds that
\begin{align*}
&|\langle\partial^k(\boldsymbol{m}\times\Delta \boldsymbol{m}),\partial^{k}\boldsymbol{m}\rangle|
=|\langle\partial^k(\boldsymbol{m}\times\nabla\boldsymbol{m}),\partial^{k}\nabla\boldsymbol{m}\rangle|\\
&\leq |\langle\boldsymbol{m}\times\partial^k\nabla\boldsymbol{m},\partial^{k}\nabla\boldsymbol{m}\rangle|
+|\langle Q_2, \partial^{k}\nabla\boldsymbol{m}\rangle|\\
&\leq\|Q_2\|_{M^{2,3}(\mathbb{R}^{3})} \|\nabla\boldsymbol{m}\|_{M^{2,3,k}(\mathbb{R}^{3})},
\end{align*}
where the term $Q_2$ only includes the derivative parts and do not have absolute term $\boldsymbol{m}$. Besides, we have
\begin{align}\label{high1.1}
\begin{split}
&|\langle\partial^k(\boldsymbol{m}\times\Delta \boldsymbol{m}),\partial^{k}\boldsymbol{m}\rangle|\\
&\leq\|\nabla \boldsymbol{m}\|_{L^{\infty}(\mathbb{R}^{3})}\|\nabla\boldsymbol{m}\|_{M^{2,3,k-1}(\mathbb{R}^{3})}
\|\nabla\boldsymbol{m}\|_{M^{2,3,k}(\mathbb{R}^{3})}\\
&\leq  C(\delta)\|\nabla\boldsymbol{m}\|^{2}_{L^{\infty}(\mathbb{R}^{3})}\|\nabla \boldsymbol{m}\|^{2}_{M^{2,3,k-1}(\mathbb{R}^{3})}+\delta\|\nabla \boldsymbol{m}\|^{2}_{M^{2,3,k}(\mathbb{R}^{3})},
\end{split}
\end{align}
and
\begin{align}\label{high1.4}
\begin{split}
\langle\partial^k\Delta\boldsymbol{m},\partial^{k}\boldsymbol{m}\rangle
=-\langle\partial^k\nabla\boldsymbol{m},\partial^{k}\nabla\boldsymbol{m}\rangle
=-\|\partial^k\nabla\boldsymbol{m}\|^{2}_{M^{2,3}(\mathbb{R}^{3})},
\end{split}
\end{align}
\begin{align}\label{high1.2}
\begin{split}
|\langle\partial^k(|\nabla\boldsymbol{m}|^{2}\boldsymbol{m}),\partial^{k}\boldsymbol{m}\rangle|
\leq\|\nabla\boldsymbol{m}\|^{2}_{L^{\infty}(\mathbb{R}^{3})}\|\nabla\boldsymbol{m}\|^{2}_{M^{2,3,k-1}(\mathbb{R}^{3})}.
\end{split}
\end{align}
Summing over all $k\geq3$ and applying \eqref{high1.3}--\eqref{high1.2}, we obtain
\begin{align}\label{high1.6}
\begin{split}
&\frac{d}{dt}\|\nabla\boldsymbol{m}\|^{2}_{M^{2,3,k-1}}
+\varepsilon\|\nabla\boldsymbol{m}\|^{2}_{M^{2,3,k}(\mathbb{R}^{3})}\\
&\leq \delta\|\nabla \boldsymbol{m}\|^2_{M^{2,3,k}(\mathbb{R}^{3})}+C(\delta)\|\nabla \boldsymbol{m}\|^2_{M^{2,3,k-1}(\mathbb{R}^{3})} \|\boldsymbol{v}\|^2_{L^{\infty}(\mathbb{R}^{3})}\left(2+\|\nabla \boldsymbol{m}\|^2_{L^{\infty}(\mathbb{R}^{3})}\right)\\
&\quad+C(\delta)\|\nabla\boldsymbol{m}\|^{2}_{L^{\infty}(\mathbb{R}^{3})}\|\nabla \boldsymbol{m}\|^{2}_{M^{2,3,k-1}(\mathbb{R}^{3})}+\delta\|\nabla \boldsymbol{m}\|^{2}_{M^{2,3,k}(\mathbb{R}^{3})}\\
&\quad+\varepsilon\|\nabla\boldsymbol{m}\|^{2}_{L^{\infty}(\mathbb{R}^{3})}\|\nabla\boldsymbol{m}\|^{2}_{M^{2,3,k-1}(\mathbb{R}^{3})}\\
&\leq 2\delta\|\nabla \boldsymbol{m}\|^2_{M^{2,3,k}(\mathbb{R}^{3})}+\|\nabla \boldsymbol{m}\|^2_{M^{2,3,k-1}(\mathbb{R}^{3})}\\
&\quad\left\{C(\delta)\|\boldsymbol{v}\|^2_{L^{\infty}(\mathbb{R}^{3})}(2+\|\nabla \boldsymbol{m}\|^2_{L^{\infty}(\mathbb{R}^{3})})
+(C(\delta)+\varepsilon)\|\nabla \boldsymbol{m}\|^2_{L^{\infty}(\mathbb{R}^{3})}\right\}.
\end{split}
\end{align}
Selecting a sufficiently small constant $\delta>0$ such that the term $\|\nabla \boldsymbol{m}\|^2_{M^{2,3,k}(\mathbb{R}^{3})}$ can be absorbed by the left-hand side of \eqref{high1.6},  we can get \eqref{high-en-es} by Gronwall's inequality. Obviously, we complete the proof of Lemma \ref{higher-energy-es}.
\end{proof}

Now we have the higher order energy estimate, and then the existence of local solutions for initial data in $M^{2,3,k}(\mathbb{R}^{3})(k\geq3)$ can be shown in \cite{DingWang1,Kenig}. We notice that the boundedness of $\|\nabla \boldsymbol{m}\|_{L^{\infty}(\mathbb{R}^{3})}$ plays an important role in extending local solutions to global solutions.
\begin{proposition}\label{global-smooth-solu}
For any integer $k\geq 3$, assume that the initial data $\boldsymbol{m}_0\in M^{2,3,k}_{\ast}\left(\mathbb{R}^{3};\mathbb{S}^{2}\right)$, $\boldsymbol{v}: \mathbb{R}^{3} \times(0, \infty) \rightarrow \mathbb{R}^{3} \times \mathbb{R}^{3}$ satisfies
\begin{align*}
\boldsymbol{v} \in C^{0}\left([0, \infty);M^{2,3,k-1}_{\ast}(\mathbb{R}^{3};\mathbb{R}^{3}\times\mathbb{R}^{3})\right),
\end{align*}
then there exists a terminal time $T^{\ast}>0$, such that for any $0<T<T^{\ast}$, the classical solution of the equation \eqref{main-eq} satisfies
\begin{align*}
\boldsymbol{m}\in C^{0}\left([0, T) ;M^{2,3,k}_{\ast}(\mathbb{R}^{3};\mathbb{S}^{2})\right),\quad
\frac{\partial \boldsymbol{m}}{\partial t} \in C^{0}\left([0, T) ;M^{2,3,k-2}(\mathbb{R}^{3};\mathbb{R}^{3}\times\mathbb{R}^{3})\right).
\end{align*}
Besides, the solution $\boldsymbol{m}$ persists as long as $\|\nabla \boldsymbol{m}\|_{L^{\infty}(\mathbb{R}^{n})}<\infty$.
\end{proposition}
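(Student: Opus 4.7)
The proof has two ingredients: (i) construction of a local-in-time classical solution by standard approximation and fixed-point iteration, combined with the a priori bound of Lemma~\ref{higher-energy-es}, and (ii) a continuation criterion ensuring that the only obstruction to extending the solution past a finite maximal time is the blow-up of $\|\nabla\boldsymbol{m}\|_{L^{\infty}(\mathbb{R}^{3})}$. Throughout, $k\ge 3$ is fixed, and I will use freely the Morrey embedding of $M^{2,3,j}_{\ast}(\mathbb{R}^{3})$ into $L^{\infty}(\mathbb{R}^{3})$ for $j\ge 2$ to convert higher-order Morrey control into sup-norm control of lower-order quantities.

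\textbf{Step 1: Local existence.} I would work with the mild formulation
\begin{align*}
\boldsymbol{m}(t)-\boldsymbol{m}_{\infty}=S(t)(\boldsymbol{m}_{0}-\boldsymbol{m}_{\infty})+\int_{0}^{t}S(t-s)\mathcal{N}(\boldsymbol{m},\boldsymbol{v})(s)\,ds,
\end{align*}
where $S(t)$ is the semigroup generated by $(\varepsilon-i)\Delta$, obtained by absorbing the linearised precession term $-\boldsymbol{m}_{\infty}\times\Delta(\boldsymbol{m}-\boldsymbol{m}_{\infty})$ into the Laplacian, and $\mathcal{N}$ collects the transport contributions $(\boldsymbol{v}\cdot\nabla)\boldsymbol{m}$, $\boldsymbol{m}\times(\boldsymbol{v}\cdot\nabla)\boldsymbol{m}$, the quadratic piece $\varepsilon|\nabla\boldsymbol{m}|^{2}\boldsymbol{m}$, and the truly nonlinear correction $-(\boldsymbol{m}-\boldsymbol{m}_{\infty})\times\Delta\boldsymbol{m}$. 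The linear semigroup bounds of Lemma~\ref{Lemma1.2}, together with the product/Moser estimates already used in the proof of Lemma~\ref{higher-energy-es}, show that the iteration map preserves a ball in $C^{0}([0,T];M^{2,3,k}_{\ast})$ and is a contraction there on some short interval $[0,T]$; this yields a maximal existence time $T^{\ast}>0$. The sphere constraint $|\boldsymbol{m}|\equiv 1$ is preserved via $\partial_{t}|\boldsymbol{m}|^{2}=0$, obtained by taking the inner product of \eqref{main-eq} with $\boldsymbol{m}$, since every term on the right-hand side is orthogonal to $\boldsymbol{m}$. The regularity $\partial_{t}\boldsymbol{m}\in C^{0}([0,T);M^{2,3,k-2})$ is then read off from \eqref{main-eq} directly, as every right-hand term involves at most two spatial derivatives of $\boldsymbol{m}$ and lower-order factors already controlled by Step 1.

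\textbf{Step 2: Continuation criterion.} Let $T^{\ast}$ be the maximal existence time from Step 1, and suppose for contradiction that $T^{\ast}<\infty$ while $M:=\sup_{t\in[0,T^{\ast})}\|\nabla\boldsymbol{m}(t)\|_{L^{\infty}(\mathbb{R}^{3})}<\infty$. Because $\boldsymbol{v}\in C^{0}([0,\infty);M^{2,3,k-1}_{\ast})$ with $k-1\ge 2$, Morrey embedding produces $V:=\sup_{t\in[0,T^{\ast}]}\|\boldsymbol{v}(t)\|_{L^{\infty}(\mathbb{R}^{3})}<\infty$. Substituting these two bounds into the exponent $C(t)$ from Lemma~\ref{higher-energy-es} yields
\begin{align*}
C(T^{\ast})\le T^{\ast}\bigl(V^{2}(2+M^{2})+M^{2}\bigr)<\infty,
\end{align*}
and hence
\begin{align*}
\sup_{t\in[0,T^{\ast})}\|\nabla\boldsymbol{m}(t)\|_{M^{2,3,k-1}(\mathbb{R}^{3})}^{2}\le e^{C(T^{\ast})}\|\nabla\boldsymbol{m}_{0}\|_{M^{2,3,k-1}(\mathbb{R}^{3})}^{2}<\infty.
\end{align*}
Thus $\boldsymbol{m}(t)-\boldsymbol{m}_{\infty}$ remains uniformly bounded in $M^{2,3,k}_{\ast}$ up to $T^{\ast}$, so reapplying Step 1 with initial datum $\boldsymbol{m}(T^{\ast}-\eta)$ for a sufficiently small $\eta>0$ extends the classical solution beyond $T^{\ast}$, contradicting the maximality of $T^{\ast}$.

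\textbf{Expected obstacle.} The main technical difficulty lies in closing the contraction within the Morrey scale rather than in a Sobolev scale: products in $M^{p,q}$ must be distributed carefully, placing one factor in $L^{\infty}$ (available through Morrey embedding for $k\ge 3$) and the other in $M^{2,3,k}$, exactly as was done in \eqref{high1.5} and \eqref{high1.1}. A secondary issue is obtaining strong, rather than merely weak-$\ast$, continuity of $t\mapsto\boldsymbol{m}(t)$ into $M^{2,3,k}_{\ast}$, which I would handle through the strong continuity of $S(\cdot)$ on $M^{2,3,k}$ (a standard consequence of Lemma~\ref{Lemma1.2}) combined with Hölder continuity in time of the Duhamel integral, whose modulus is controlled by the nonlinear bounds established in Step 1.
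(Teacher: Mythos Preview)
Your proposal is correct and, in fact, more detailed than what the paper provides. The paper does not give a self-contained proof of this proposition: immediately before stating it, the authors write that local existence ``can be shown in \cite{DingWang1,Kenig}'' and rely on Lemma~\ref{higher-energy-es} for the continuation criterion. Your Step~2 is exactly the argument the paper intends (the bound on $C(t)$ from Lemma~\ref{higher-energy-es} becoming finite once $\|\nabla\boldsymbol{m}\|_{L^{\infty}}$ and $\|\boldsymbol{v}\|_{L^{\infty}}$ are bounded), and your Step~1 is a reasonable elaboration of the fixed-point scheme the cited references implement.

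One minor imprecision: the linearised operator around $\boldsymbol{m}_{\infty}$ is $(\varepsilon I-\boldsymbol{m}_{\infty}\times)\Delta$, a constant-coefficient system whose symbol has eigenvalues $\varepsilon$ and $\varepsilon\pm i$ rather than the scalar $(\varepsilon-i)\Delta$. This does not affect the argument, since the associated semigroup enjoys the same parabolic smoothing bounds as in Lemma~\ref{Lemma1.2}, but the identification with $S(t)$ as defined in the paper is only correct after diagonalising on the tangent plane to $\boldsymbol{m}_{\infty}$.
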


Next, we consider the uniqueness of weak solutions and classical solutions, which is useful in approximating local solutions. In the later section, it is also helpful for proving the Cauchy sequence.
\begin{lemma}[Uniqueness and stability]\label{Lemma1.4}
$(1)$~Assume that $\boldsymbol{m}_{1}$ and $\boldsymbol{m}_{2}$ are classical solutions of \eqref{main-eq}, for any $t\in(0,T)$, we have
\begin{align}\label{3.1}
\begin{split}
\|(\boldsymbol{m}_{1}-\boldsymbol{m}_{2})(t)\|^2_{M^{2,3}(\mathbb{R}^{3})} &\leq \exp\left(C\sum_{i=1}^2\int_{0}^{t}
\|\nabla\boldsymbol{m}_{i}\|_{L^{\infty}(\mathbb{R}^{3})}ds+\|\boldsymbol{v}\|_{L^{\infty}(\mathbb{R}^{3})}\right)\\
&\quad\|(\boldsymbol{m}_{1}-\boldsymbol{m}_{2})(0)\|^2_{M^{2,3}(\mathbb{R}^{3})},
\end{split}
\end{align}
where $C>0$ is independent of t. In particular, solutions in Proposition \ref{global-smooth-solu} are unique and stable.

$(2)$~Assume that $\boldsymbol{m}_{1}$ and $\boldsymbol{m}_{2}$ are weak solutions of \eqref{main-eq},
$\nabla\boldsymbol{m}_{i}\in L^{\infty}\left((0,T);M^{3,3}(\mathbb{R}^{3})\right)$ and for any sufficiently small constant $\eta>0,~t\in(0,T)$, it holds
\begin{align*}
\sup_{t\in(0,T)}\left(\|\nabla m_{i}(t)\|_{M^{3,3}(\mathbb{R}^{3})}+\|\boldsymbol{v}\|_{L^{\infty}(\mathbb{R}^{3})}\right)\leq \eta.
\end{align*}
Then there exists a constant $C=C(\eta,\varepsilon)$ such that
\begin{align}\label{3.2}
\|(\boldsymbol{m}_{1}-\boldsymbol{m}_{2})(t)\|^2_{M^{2,3}(\mathbb{R}^{3})}
+C\int_{0}^{t}\|\nabla(\boldsymbol{m}_{1}-\boldsymbol{m}_{2})(t)\|^2_{M^{2,3}(\mathbb{R}^{3})}ds
\leq \|(\boldsymbol{m}_{1}-\boldsymbol{m}_{2})(0)\|^2_{M^{2,3}(\mathbb{R}^{3})}.
\end{align}
In particular, solutions in Proposition \ref{global-smooth-solu} are unique and stable.
\end{lemma}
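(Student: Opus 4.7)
The plan is to work with the difference $\boldsymbol{w}=\boldsymbol{m}_{1}-\boldsymbol{m}_{2}$, which (by subtracting the two copies of \eqref{main-eq}) solves
\begin{align*}
\partial_{t}\boldsymbol{w}-\varepsilon\Delta\boldsymbol{w}+\boldsymbol{m}_{1}\times\Delta\boldsymbol{w}
&=-\boldsymbol{w}\times\Delta\boldsymbol{m}_{2}-(\boldsymbol{v}\cdot\nabla)\boldsymbol{w}
-\boldsymbol{m}_{1}\times(\boldsymbol{v}\cdot\nabla)\boldsymbol{w} \\
&\quad -\boldsymbol{w}\times(\boldsymbol{v}\cdot\nabla)\boldsymbol{m}_{2}
+\varepsilon\bigl(|\nabla\boldsymbol{m}_{1}|^{2}\boldsymbol{m}_{1}-|\nabla\boldsymbol{m}_{2}|^{2}\boldsymbol{m}_{2}\bigr),
\end{align*}
where the last term rewrites as $\varepsilon\nabla\boldsymbol{w}\!:\!(\nabla\boldsymbol{m}_{1}+\nabla\boldsymbol{m}_{2})\,\boldsymbol{m}_{1}+\varepsilon|\nabla\boldsymbol{m}_{2}|^{2}\boldsymbol{w}$. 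I would then pair this equation with $\boldsymbol{w}$ in $M^{2,3}(\mathbb{R}^{3})$ to obtain an identity of the form
\begin{align*}
\tfrac{1}{2}\tfrac{d}{dt}\|\boldsymbol{w}\|_{M^{2,3}}^{2}+\varepsilon\|\nabla\boldsymbol{w}\|_{M^{2,3}}^{2}
=\sum_{j}I_{j},
\end{align*}
where each $I_{j}$ encodes one of the terms above.

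The key algebraic step is to avoid any derivative landing on $\boldsymbol{m}_{2}$ of order two. I use $\boldsymbol{m}_{1}\times\Delta\boldsymbol{w}=\nabla\!\cdot\!(\boldsymbol{m}_{1}\times\nabla\boldsymbol{w})-\nabla\boldsymbol{m}_{1}\times\nabla\boldsymbol{w}$ and the pointwise identity $(\boldsymbol{m}_{1}\times\nabla\boldsymbol{w})\cdot\nabla\boldsymbol{w}=0$ to reduce the gyroscopic term to $-\langle\nabla\boldsymbol{m}_{1}\times\nabla\boldsymbol{w},\boldsymbol{w}\rangle$. The same trick applied to $\boldsymbol{w}\times\Delta\boldsymbol{m}_{2}$ yields $-\langle\boldsymbol{w}\times\nabla\boldsymbol{m}_{2},\nabla\boldsymbol{w}\rangle-\langle\nabla\boldsymbol{w}\times\nabla\boldsymbol{m}_{2},\boldsymbol{w}\rangle$, so that every remaining integrand is a trilinear form in $\nabla\boldsymbol{m}_{i}$, $\nabla\boldsymbol{w}$, and $\boldsymbol{w}$. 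The transport term $(\boldsymbol{v}\cdot\nabla)\boldsymbol{w}$ I treat by Cauchy--Schwarz directly, absorbing the $\boldsymbol{v}$ in its $L^{\infty}$-norm.

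For part $(1)$, classical regularity lets me pull out $\|\nabla\boldsymbol{m}_{i}\|_{L^{\infty}}$ and $\|\boldsymbol{v}\|_{L^{\infty}}$, giving each $I_{j}$ a bound of the form $A(t)\|\boldsymbol{w}\|_{M^{2,3}}\|\nabla\boldsymbol{w}\|_{M^{2,3}}$ or $B(t)\|\boldsymbol{w}\|_{M^{2,3}}^{2}$. A Young's inequality with a parameter $\delta\ll\varepsilon$ absorbs the $\|\nabla\boldsymbol{w}\|_{M^{2,3}}^{2}$ contribution into the left side, leaving $\tfrac{d}{dt}\|\boldsymbol{w}\|_{M^{2,3}}^{2}\le \tilde{C}(t)\|\boldsymbol{w}\|_{M^{2,3}}^{2}$ with $\tilde{C}(t)\lesssim\|\boldsymbol{v}\|_{L^{\infty}}^{2}+\sum_{i}\|\nabla\boldsymbol{m}_{i}\|_{L^{\infty}}^{2}$, and Gronwall's inequality finishes \eqref{3.1}.

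Part $(2)$ is the harder case, because only $\nabla\boldsymbol{m}_{i}\in L^{\infty}_{t}M^{3,3}_{x}$ is available. Here the plan is to distribute the two factors of $\nabla\boldsymbol{m}_{i}$ and $\nabla\boldsymbol{w}$ in $M^{3,3}\cdot M^{2,3}$ and put $\boldsymbol{w}$ into a higher Morrey space; specifically, by the Hölder inequality in Morrey spaces together with the Morrey--Sobolev embedding $\|\boldsymbol{w}\|_{M^{6,3}}\le C\|\nabla\boldsymbol{w}\|_{M^{2,3}}$ (a consequence of Lemma \ref{Lemma1.1} applied to the Riesz representation of $\boldsymbol{w}$), every $I_{j}$ arising from the nonlinearities is bounded by $C\eta\|\nabla\boldsymbol{w}\|_{M^{2,3}}^{2}$. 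The quadratic term $\varepsilon|\nabla\boldsymbol{m}_{2}|^{2}\boldsymbol{w}$ tested against $\boldsymbol{w}$ is handled in the same way using $\|\nabla\boldsymbol{m}_{2}\|_{M^{3,3}}^{2}\le\eta^{2}$. The main obstacle, and the reason the small-constant hypothesis enters, is precisely that these nonlinear terms have no room for an $L^{\infty}_{t}$ factor: for small enough $\eta$ (depending only on $\varepsilon$) the total nonlinear contribution is strictly smaller than $\varepsilon\|\nabla\boldsymbol{w}\|_{M^{2,3}}^{2}$, so we can absorb it and keep a strictly positive dissipation constant $C=C(\eta,\varepsilon)$. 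Integrating in time yields \eqref{3.2}, and uniqueness/stability follow by specializing to $\boldsymbol{w}(0)=0$ or letting $\boldsymbol{w}(0)$ vary in $M^{2,3}$.
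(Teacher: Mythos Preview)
Your proposal is correct and follows essentially the same approach as the paper: derive the equation for the difference $\boldsymbol{w}=\boldsymbol{m}_{1}-\boldsymbol{m}_{2}$, test against $\boldsymbol{w}$ in $M^{2,3}$, use the cross-product cancellations to reduce every nonlinear term to a trilinear form, then apply Young's inequality with $L^{\infty}$ control on $\nabla\boldsymbol{m}_{i}$ for part (1) and the Morrey--Sobolev embedding $\|\boldsymbol{w}\|_{M^{6,3}}\lesssim\|\nabla\boldsymbol{w}\|_{M^{2,3}}$ together with the smallness hypothesis for part (2). The only cosmetic differences are in how the cross terms are grouped (the paper writes $\Psi\times\Delta\boldsymbol{m}_{1}+\boldsymbol{m}_{2}\times\Delta\Psi$ in divergence form, you write $\boldsymbol{m}_{1}\times\Delta\boldsymbol{w}+\boldsymbol{w}\times\Delta\boldsymbol{m}_{2}$) and the decomposition of $|\nabla\boldsymbol{m}_{1}|^{2}\boldsymbol{m}_{1}-|\nabla\boldsymbol{m}_{2}|^{2}\boldsymbol{m}_{2}$; note also that $\langle\boldsymbol{w}\times\Delta\boldsymbol{m}_{2},\boldsymbol{w}\rangle=0$ pointwise, so your integration-by-parts step there is harmless but unnecessary.
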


\begin{proof}
Observe that $\boldsymbol{m}_{1},~\boldsymbol{m}_{2}$ are classical solutions of \eqref{main-eq}, let $\Psi(t)=\boldsymbol{m}_{1}-\boldsymbol{m}_{2}$, and then we obtain
\begin{align*}
\Psi_{t}&=-(\boldsymbol{v}\cdot \nabla)\Psi-\Psi\times(\boldsymbol{v}\cdot \nabla)\boldsymbol{m}_{1}
-\nabla\cdot(\Psi \times \nabla\boldsymbol{m}_{1}+\boldsymbol{m}_{2} \times \nabla\Psi)\\
&\quad +\boldsymbol{m}_{2}\times(\boldsymbol{v} \cdot \nabla)\Psi+\varepsilon \left[\Delta\Psi+|\nabla\boldsymbol{m}_{1}|^{2}\Psi
+\boldsymbol{m}_{2}\nabla\Psi:\nabla(\boldsymbol{m}_{1}+\boldsymbol{m}_{2})\right],
\end{align*}
where the notation ``:'' means the matrix product. Selecting $2\Psi$ as the test function and using H\"{o}lder's inequality, integration by parts, we have{\small
\begin{align}\label{1.12}
\begin{split}
\frac{d}{dt}\|\Psi\|^2_{M^{2,3}(\mathbb{R}^{3})}+2\varepsilon\|\nabla\Psi\|^2_{M^{2,3}(\mathbb{R}^{3})}
&\leq\|\boldsymbol{v}\|_{L^{\infty}(\mathbb{R}^{3})}
\|\Psi\|_{M^{2,3}(R^3)}\|\nabla\Psi\|_{M^{2,3}(\mathbb{R}^{3})}+2\varepsilon\|\nabla\boldsymbol{m}_{1} \Psi\|^2_{M^{2,3}(\mathbb{R}^{3})} \\
&\quad+2\varepsilon\|\nabla\Psi\|^2_{M^{2,3}(\mathbb{R}^{3})}\|\nabla(\boldsymbol{m}_{1}
+\boldsymbol{m}_{2})\Psi\|_{M^{2,3}(\mathbb{R}^{3})} \\
&\quad+2\|\nabla\Psi\|_{M^{2,3}(\mathbb{R}^{3})}\|\Psi\times\nabla\boldsymbol{m}_{1}\|_{M^{2,3}(\mathbb{R}^{3})}.
\end{split}
\end{align}}
\!\!By Young's inequality, one has
\begin{align*}
&\frac{\mathrm{d}}{\mathrm{d}t}\|\Psi\|^2_{M^{2,3}(\mathbb{R}^{3})}+2\varepsilon\|\nabla\Psi\|^2_{M^{2,3}(\mathbb{R}^{3})}\\
&\leq c(\epsilon_{1})\|\boldsymbol{v}\|^2_{L^{\infty}(\mathbb{R}^{3})}\|\Psi\|^2_{M^{2,3}(\mathbb{R}^{3})}
+\epsilon_{1}\|\nabla\Psi\|^2_{M^{2,3}(\mathbb{R}^{3})}\\
&\quad+(2\varepsilon+c(\epsilon_{2}))\|\nabla\boldsymbol{m}_{1}\|^2_{L^{\infty}(\mathbb{R}^{3})}\|\Psi\|^2_{M^{2,3}(\mathbb{R}^{3})}
+\epsilon_{2}\|\nabla\Psi\|^2_{M^{2,3}(\mathbb{R}^{3})}\\
&\quad+c(\epsilon_{2})\|\Psi\|^2_{M^{2,3}(\mathbb{R}^{3})}\|\nabla\boldsymbol{m}_{2}\|^2_{L^{\infty}(\mathbb{R}^{3})}
+\epsilon_{3}\|\nabla\Psi\|^2_{M^{2,3}(\mathbb{R}^{3})}\\
&\quad+c(\epsilon_{3})\|\Psi\times\nabla\boldsymbol{m}_{1}\|^2_{M^{2,3}(\mathbb{R}^{3})}.
\end{align*}
Choosing suitable constants $\epsilon_{1},\epsilon_{2},\epsilon_{3}$ such that $\epsilon_{1}+\epsilon_{2}+\epsilon_{3}=2\varepsilon$, and then $\|\nabla\Psi\|^2_{M^{2,3}(R^3)}$ can be absorbed by the term on the left-hand side, that is,
\begin{align*}
&\frac{d}{dt}\|\Psi\|^2_{M^{2,3}(\mathbb{R}^{3})}\\
&\leq c(\varepsilon) \left(\|\boldsymbol{v}\|^2_{L^{\infty}(\mathbb{R}^{3})}\|\Psi\|^2_{M^{2,3}(\mathbb{R}^{3})}
+\|\nabla\boldsymbol{m}_{1}\|^2_{L^{\infty}(\mathbb{R}^{3})}\|\Psi\|^2_{M^{2,3}(\mathbb{R}^{3})}
+\|\nabla\boldsymbol{m}_{2}\|^2_{L^{\infty}(\mathbb{R}^{3})}\|\Psi\|^2_{M^{2,3}(\mathbb{R}^{3})}\right)\\
&\leq c(\varepsilon)\|\Psi\|^2_{M^{2,3}(\mathbb{R}^{3})}\left(\|\nabla\boldsymbol{m}_{1}\|^2_{L^{\infty}(\mathbb{R}^{3})}
+\|\nabla\boldsymbol{m}_{2}\|^2_{L^{\infty}(\mathbb{R}^{3})}+\|\boldsymbol{v}\|^2_{L^{\infty}(\mathbb{R}^{3})}\right).
\end{align*}
Applying Gronwall's inequality, we find that
\begin{align*}
\|\Psi(t)\|^2_{M^{2,3}(\mathbb{R}^{3})} \leq e^{C(t)}\|\Psi(0)\|^2_{M^{2,3}(\mathbb{R}^{3})},
\end{align*}
where $C(t)=c(\varepsilon)\int_{0}^{t}\|\nabla\boldsymbol{m}_{1}\|^2_{L^{\infty}(\mathbb{R}^{3})}
+\|\nabla\boldsymbol{m}_{2}\|^2_{L^{\infty}(\mathbb{R}^{3})}+\|\boldsymbol{v}\|^2_{L^{\infty}(\mathbb{R}^{3})}ds$.
Then, we get \eqref{3.1}.

Next, we consider the weak solution. Inserting the test function into \eqref{main-eq} and applying H\"{o}lder's inequality, we get
\begin{align}\label{1.13}
\begin{split}
\frac{\mathrm{d}}{\mathrm{d}t}\|\Psi\|^2_{M^{2,3}(\mathbb{R}^{3})}+2\varepsilon\|\nabla\Psi\|^2_{M^{2,3}(\mathbb{R}^{3})}
&\leq \|\boldsymbol{v}\|_{L^{\infty}(\mathbb{R}^{3})}\|\Psi\|_{M^{2,3}(\mathbb{R}^{3})}\|\nabla\Psi\|_{M^{2,3}(\mathbb{R}^{3})} \\
&\quad+\|\nabla\boldsymbol{m}_{1}\|^2_{M^{3,3}(\mathbb{R}^{3})}\|\Psi\|^2_{M^{6,3}(\mathbb{R}^{3})} \\
&\quad+\|\nabla\boldsymbol{m}_{1}\|_{M^{3,3}(\mathbb{R}^{3})}\|\Psi\|_{M^{6,3}(\mathbb{R}^{3})}
\|\nabla\Psi\|_{M^{2,3}(\mathbb{R}^{3})} \\
&\quad+\|\nabla\boldsymbol{m}_{2}\|_{M^{3,3}(\mathbb{R}^{3})}\|\Psi\|_{M^{6,3}(\mathbb{R}^{3})}
\|\nabla\Psi\|_{M^{2,3}(\mathbb{R}^{3})}.
\end{split}
\end{align}
By the Gagliardo-Nirenberg-Sobolev inequality, we have
\begin{align*}
\|\Psi\|_{M^{6,3}(\mathbb{R}^{3})}\leq \|\nabla\Psi\|_{M^{2,3}(\mathbb{R}^{3})}.
\end{align*}
Substituting this into \eqref{1.13}, one has
\begin{align*}
\frac{d}{dt}\|\Psi\|^2_{M^{2,3}(\mathbb{R}^{3})}+2\varepsilon\|\nabla\Psi\|^2_{M^{2,3}(\mathbb{R}^{3})}
&\leq \|\boldsymbol{v}\|_{L^{\infty}(\mathbb{R}^{3})}\|\nabla\Psi\|^2_{M^{2,3}(\mathbb{R}^{3})}
+\|\nabla\boldsymbol{m}_{1}\|^2_{M^{3,3}(\mathbb{R}^{3})}\|\nabla\Psi\|^2_{M^{2,3}(\mathbb{R}^{3})}\\
&\quad+\|\nabla\boldsymbol{m}_{2}\|^2_{M^{3,3}(\mathbb{R}^{3})}\|\nabla\Psi\|^2_{M^{2,3}(\mathbb{R}^{3})}\\
&\leq \eta\|\nabla\Psi\|^2_{M^{2,3}(\mathbb{R}^{3})},
\end{align*}
when $\eta$ is small enough, then it holds that
\begin{align*}
\|\Psi(t)\|^2_{M^{2,3}(\mathbb{R}^{3})}+(2\varepsilon-\eta)\int_{0}^{t}\|\nabla\Psi(s)\|^2_{M^{3,3}(\mathbb{R}^{3})}ds
\leq \|\Psi(0)\|^2_{M^{2,3}(\mathbb{R}^{3})}.
\end{align*}
Obviously, we obtain \eqref{3.2}.
\end{proof}

\section{The covariant Landau-Lifshitz-Slonczewski equation}\label{Sec4}
It is difficult to estimate nonlinear parts of the LLS equation directly in view of the curl operator. Therefore, in this section, we introduce a shape operator to describe the properties of nonlinear parts by using the translation and rotation of the coordinate axis. In other words, we replace nonlinear terms with moving frames to establish the covariant Landau-Lifshitz-Slonczewski equation. Finally, we verify the invariance of the covariant equation under the Coulomb gauge.
\subsection{Moving frames}
We start with a brief introduction of the method of moving frames that leads to the covariant complex Ginzburg-Landau equation. Suppose that
\begin{align*}
\boldsymbol{m}\in C^0\left([0,T];M^{p,q,\infty}_{\ast}(\mathbb{R}^3;\mathbb{S}^2)\right),\quad
\frac{\partial{\boldsymbol{m}}}{\partial t}\in C^0\left([0,T];M^{p,q,\infty}(\mathbb{R}^3\right).
\end{align*}
By virtue of the topology and a regularization argument in \cite{Beje}, we construct a vector pair $\{X,Y\}$ such that their orthonormal tangent vector along ${\boldsymbol{m}}$. Let
\begin{align*}
X,Y:[0,T]\times{\mathbb R^3}\rightarrow{\mathbb S^2},\quad {\boldsymbol{m}}\times X=Y.
\end{align*}
Moreover, $|X|=1,~|Y|=1$ and $X,~Y$ have the same regularity as ${\boldsymbol{m}}$ with respect to time-space.
Next, we define the map
\begin{align*}
\boldsymbol{a}=(a_0, a_1,a_2,a_3)\in C^0([0,T];M^{p,q,\infty}(\mathbb{R}^3;\mathbb{R}^4)),
\end{align*}
where the coefficients $a_\alpha,~\alpha\in\{1,2,3\}$ are defined by
\begin{align*}
a_\alpha=\langle\partial_{\alpha}X,Y\rangle=-\langle\partial_{\alpha}Y,X\rangle.
\end{align*}
Specifically, $\langle \cdot,\cdot\rangle$ is the Euclidean product on $\mathbb{R}^3$, $a_\alpha$ indicates the rotation of the Y-axis around the X-axis in moving frames. By symmetry, $a_\alpha$ also can denote the rotation of the X-axis around the Y-axis.

The covariant derivative can be deduced by
\begin{align*}
\mathcal{D}_\alpha=\partial_\alpha+ia_\alpha,
\end{align*}
where $\partial_0=\frac{\partial}{\partial t},~\partial_\alpha=\frac{\partial}{\partial{x_\alpha}}$, $\alpha=1,2,3$ denotes the spatial direction.
We can show the derivative of $\boldsymbol{m}$ in time-space by plural, i.e.,
\begin{align*}
u_\alpha=\langle\partial_\alpha{\boldsymbol{m}},X\rangle+i\langle\partial_\alpha{\boldsymbol{m}},Y\rangle ,\quad \alpha=0,\ldots,3.
\end{align*}
Then $\partial_\alpha\boldsymbol{m}$ is represented by its projection in the X and Y directions, i.e.,
\begin{align}
\partial_\alpha\boldsymbol{m}={\rm Re}(u_\alpha)X+{\rm Im}(u_\alpha)Y.\label{2.6}
\end{align}
Similarly,
\begin{align*}
\partial_\alpha &X=-{\rm Re}(u_\alpha)\boldsymbol{m}+a_\alpha Y,\\
\partial_\alpha &Y=-{\rm Im}(u_\alpha)\boldsymbol{m}-a_\alpha X.
\end{align*}
Moreover, we observe that the zero torsion identity:
\begin{align*}
\mathcal{D}_\alpha u_\beta=\mathcal{D}_\beta u_\alpha,
\end{align*}
and the curvature identity:
\begin{align*}
\mathcal{R}_{\alpha\beta}:=[\mathcal{D}_\alpha,\mathcal{D}_\beta]=i(\partial_{\alpha}a_{\beta}-\partial_{\beta}a_{\alpha})
=i{\rm Im}(u_{\alpha}\bar{u}_{\beta}).
\end{align*}

Assume that the initial data $\boldsymbol{m}_0\in M^{p,q,\infty}(\mathbb{R}^3;\mathbb{S}^2)$ and $\boldsymbol{m}$ is the solution of \eqref{main-eq}. We can construct coordinate axis $\{X,Y\}$ that forms an orthonormal tangent vector along $\boldsymbol{m}$ by using moving frames.
For simplicity of analysis, representing $\{X,Y\}$ as the form of complex vectors, one has
\begin{align*}
Z=X+iY.
\end{align*}
By the orthogonality of $\boldsymbol{m}$ and $X,Y$, for any $\xi\in\mathbb{R}^3$, it is easy to see that
\begin{align*}
\langle Z,\boldsymbol{m}\rangle=0,\quad \langle Z,\boldsymbol{m}\times\xi\rangle=i\langle Z,\xi\rangle.
\end{align*}

\subsection{The covariant LLS equation}
In this subsection, we can obtain the covariant form of \eqref{main-eq} by using moving frames.
\begin{proposition}\label{pro2.1}
Assume that $\boldsymbol{m}$ is the solution in Proposition \ref{global-smooth-solu} with the initial data $\boldsymbol{m}_0\in M^{2,3,\infty}_{\ast}(\mathbb{R}^3;~\mathbb{S}^2)$,
and then \eqref{main-eq} can be transformed into the covariant LLS equation by moving frames, one has
\begin{align}\label{covariant-main-eq}
\begin{cases}
u_{0}+\sum_{k=1}^{3} v_{k}u_k=(\varepsilon-i)\sum_{k=1}^{3}\mathcal{D}_{k}u_{k},\\
\mathcal{D}_{\beta}u_{\alpha}=\mathcal{D}_{\alpha}u_{\beta},\\
\partial_{\alpha}a_{\beta}-\partial_{\beta}a_{\alpha}={\rm Im}(u_{\alpha} \bar{u}_{\beta}).
\end{cases}
\end{align}
with the notations
\begin{align*}
u_\alpha\in C^0([0,T]\times\mathbb{R}^3;\mathbb{C})\quad a_\alpha\in C^0([0,T]\times\mathbb{R}^3;\mathbb{R}),\quad \alpha=1,2,3.
\end{align*}
\end{proposition}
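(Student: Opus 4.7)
The plan is to derive each equation in \eqref{covariant-main-eq} by projecting \eqref{main-eq} and two geometric compatibility identities onto the complex tangent vector $Z = X + iY$, using the orthonormal frame $\{X, Y, \boldsymbol{m}\}$. Differentiating the pointwise relations $|X| = |Y| = 1$, $\langle X,\boldsymbol{m}\rangle = \langle Y,\boldsymbol{m}\rangle = \langle X,Y\rangle = 0$ together with the definitions $u_\alpha = \langle \partial_\alpha \boldsymbol{m}, Z\rangle$ and $a_\alpha = \langle \partial_\alpha X, Y\rangle$ gives the structural equations
\begin{align*}
\partial_\alpha X = -\mathrm{Re}(u_\alpha)\boldsymbol{m} + a_\alpha Y, \qquad \partial_\alpha Y = -\mathrm{Im}(u_\alpha)\boldsymbol{m} - a_\alpha X,
\end{align*}
together with the already-noted $\partial_\alpha \boldsymbol{m} = \mathrm{Re}(u_\alpha) X + \mathrm{Im}(u_\alpha) Y$. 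These, combined with the pairing identities $\langle \boldsymbol{m}, Z\rangle = 0$ and $\langle \boldsymbol{m}\times\xi, Z\rangle = i\langle \xi, Z\rangle$ recorded in the excerpt, are the only tools required.

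To obtain the main PDE, I pair \eqref{main-eq} with $Z$ term by term. The time derivative contributes $u_0$; the convective terms $(\boldsymbol{v}\cdot\nabla)\boldsymbol{m}$ and $\boldsymbol{m}\times(\boldsymbol{v}\cdot\nabla)\boldsymbol{m}$ produce expressions proportional to $\sum_k v_k u_k$ via the cross-product identity; the $\varepsilon|\nabla\boldsymbol{m}|^2\boldsymbol{m}$ term is annihilated since $\langle \boldsymbol{m}, Z\rangle = 0$; and the two Laplacian contributions combine into $(\varepsilon - i)\langle \Delta\boldsymbol{m}, Z\rangle$. The principal calculation is to show $\langle \Delta\boldsymbol{m}, Z\rangle = \sum_{k=1}^{3} \mathcal{D}_k u_k$. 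Differentiating $\partial_k \boldsymbol{m}$ once more and substituting the structural equations gives
\begin{align*}
\partial_k^2 \boldsymbol{m} = \mathrm{Re}(\mathcal{D}_k u_k)\, X + \mathrm{Im}(\mathcal{D}_k u_k)\, Y - |u_k|^2 \boldsymbol{m},
\end{align*}
where the tangential coefficients are recognized as the real and imaginary parts of $\partial_k u_k + i a_k u_k$. Summing over $k$ and pairing with $Z$ kills the normal component and assembles the covariant sum.

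The zero-torsion identity $\mathcal{D}_\alpha u_\beta = \mathcal{D}_\beta u_\alpha$ follows from $\partial_\alpha \partial_\beta \boldsymbol{m} = \partial_\beta \partial_\alpha \boldsymbol{m}$: expanding both sides by the structural equations and projecting onto $X$ and $Y$ yields the real and imaginary parts of the claim. The curvature identity $\partial_\alpha a_\beta - \partial_\beta a_\alpha = \mathrm{Im}(u_\alpha \bar{u}_\beta)$ follows from $\partial_\alpha \partial_\beta X = \partial_\beta \partial_\alpha X$: projecting onto $Y$, the derivatives of $a_\cdot$ give $\partial_\alpha a_\beta - \partial_\beta a_\alpha$, while the remaining cross terms collapse to $\mathrm{Re}(u_\alpha)\mathrm{Im}(u_\beta) - \mathrm{Re}(u_\beta)\mathrm{Im}(u_\alpha) = \mathrm{Im}(u_\alpha \bar{u}_\beta)$.

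The main obstacle is the bookkeeping in the Laplacian calculation: the normal part $-|u_k|^2 \boldsymbol{m}$ must be cleanly separated from the tangential parts after two differentiations, and the signs in $\partial_k Y = -\mathrm{Im}(u_k)\boldsymbol{m} - a_k X$ must be tracked so that the tangential coefficients reassemble into $\mathcal{D}_k u_k = \partial_k u_k + i a_k u_k$ rather than its conjugate. Once this calculation is verified, the torsion and curvature identities are essentially symmetry statements and the assembly of the covariant equation is straightforward.
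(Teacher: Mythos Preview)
Your proposal is correct and follows essentially the same route as the paper: both compute $\Delta\boldsymbol{m}$ in the moving frame and then pair with $Z=X+iY$, arriving at $(\varepsilon-i)\sum_k\mathcal{D}_k u_k$ on the right. Your treatment is in fact slightly more complete, since the paper defers the zero-torsion and curvature identities to \cite{Melcher1} while you sketch them directly via $\partial_\alpha\partial_\beta\boldsymbol{m}=\partial_\beta\partial_\alpha\boldsymbol{m}$ and $\partial_\alpha\partial_\beta X=\partial_\beta\partial_\alpha X$; one small point worth making explicit is the identification $v_k=(1+i)\boldsymbol{v}_k$, which is how the two convective terms combine into the single sum $\sum_k v_k u_k$.
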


\begin{proof}
In the following, we mainly prove that $u_0+\sum_{k=1}^{3}v_ku_k=(\varepsilon-i)\sum_{k=1}^{3}\mathcal{D}_ku_k$, the proof of zero torsion identity and the curvature identity refer to \cite{Melcher1}. With the fact that $|\boldsymbol{m}|,~|X|,~|Y|=1$, and transforming $\Delta\boldsymbol{m},~|\nabla\boldsymbol{m}|^2$ into the covariant forms respectively by \eqref{2.6}, one has
\begin{align}\label{2.7}
\begin{split}
\Delta\boldsymbol{m}&=\sum_{k=1}^{3}\partial_k\left[{\rm Re}(u_k)X+{\rm Im}(u_k)Y\right] \\
&=\sum_{k=1}^3\left[\partial_k({\rm Re}(u_k))X+\partial_k({\rm Im}(u_k))Y-{\rm Re}^2(u_k)\boldsymbol{m}\right] \\
&\quad-\sum_{k=1}^3\left[{\rm Im}^2(u_k)\boldsymbol{m}+a_k{\rm Re}(u_k)Y-a_k{\rm Im}(u_k)X\right],
\end{split}
\end{align}
\begin{align}\label{2.8}
\begin{split}
|\nabla\boldsymbol{m}|^2=\sum_{k=1}^3\left[{\rm Re}(u_k)X+{\rm Im}(u_k)Y\right]^2
=\sum_{k=1}^3\left[{\rm Re}^2(u_k)+{\rm Im}^2(u_k)\right].
\end{split}
\end{align}
Combining \eqref{2.7} and \eqref{2.8}, we deal with the right-hand side of \eqref{main-eq} as
\begin{align*}
\boldsymbol{m}\times\Delta\boldsymbol{m}&=\sum_{k=1}^3\left[\partial_k({\rm Re}(u_k))Y-a_k{\rm Im}(u_k)Y-\partial_k({\rm Im}(u_k))X
-a_k{\rm Re}(u_k)X\right],\\
\Delta\boldsymbol{m}+|\nabla\boldsymbol{m}|^{2}\boldsymbol{m}&=\sum_{k=1}^3\left[\partial_k({\rm Re}(u_k))-a_k{\rm Im}(u_k)\right]X
+\sum_{k=1}^3\left[\partial_k({\rm Im}(u_k))-a_k{\rm Re}(u_k)\right]Y.
\end{align*}
Multiplying \eqref{main-eq} by $(X+iY)$  and then simplifying by the definition of $\mathcal{D}_k$, using the orthogonality of $X,Y,\boldsymbol{m}$, we get
\begin{align*}
(X+iY)\left[-\boldsymbol{m}\times\Delta\boldsymbol{m}+\varepsilon(\Delta\boldsymbol{m}+|\nabla\boldsymbol{m}|^{2}\boldsymbol{m})\right]
=(\varepsilon-i)\sum_{k=1}^{n}\mathcal{D}_ku_k.
\end{align*}
Considering the terms on the left-hand side of \eqref{main-eq}, we write $v=(1+i)\boldsymbol{v}$ to find that
\begin{align*}
(X+iY)\left(\frac{\partial\boldsymbol{m}}{\partial t}+(\boldsymbol{v}\cdot\nabla)\boldsymbol{m}\right)=u_0+\sum_{k=1}^{3}v_ku_k.
\end{align*}
Hence, we obtain the desired results.
\end{proof}

\begin{corollary}\label{cor1}
Along with the same assumptions as Proposition \ref{pro2.1}, then $u=(u_1,u_{2},u_3)$ solves the covariant complex Ginzburg-Landau equation:
\begin{align}\label{2.10}
\mathcal{D}_{0} u_l+\mathcal{D}_l\sum_{k=1}^{n}(v_{k}u_{k})=(\alpha-i)\sum_{k=1}^{3}(\mathcal{D}_{k}\mathcal{D}_{k} u_l
+\mathcal{R}_{l k} u_{k}),\quad l \in\{1,2,3\}.
\end{align}
with the initial data
\begin{align*}
u(0)=\langle\nabla\boldsymbol{m}_{0},X\rangle+i\langle\nabla \boldsymbol{m}_{0},Y\rangle,\quad
u(0)\in M^{p,q,\infty}(\mathbb{R}^3;\mathbb{C}^3).
\end{align*}
\end{corollary}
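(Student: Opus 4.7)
The plan is to derive the covariant complex Ginzburg--Landau equation \eqref{2.10} by applying the covariant derivative $\mathcal{D}_l$ to the first identity of Proposition \ref{pro2.1} and then rearranging using the two geometric identities in that proposition (zero torsion and curvature). Concretely, starting from
\begin{align*}
u_{0}+\sum_{k=1}^{3} v_{k}u_k=(\varepsilon-i)\sum_{k=1}^{3}\mathcal{D}_{k}u_{k},
\end{align*}
I would apply $\mathcal{D}_l$ (for $l\in\{1,2,3\}$) to both sides, treating it as a first order operator that acts componentwise on the complex scalars $u_k$ and $v_k u_k$.

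The two key manipulations are as follows. First, on the left I use the zero torsion identity $\mathcal{D}_l u_0=\mathcal{D}_0 u_l$ to turn $\mathcal{D}_l u_0$ into a time-covariant derivative of $u_l$; the term $\mathcal{D}_l \sum_k v_k u_k$ is simply the second term of \eqref{2.10} and requires no change. Second, on the right I rewrite each $\mathcal{D}_l\mathcal{D}_k u_k$ by commuting derivatives:
\begin{align*}
\mathcal{D}_l\mathcal{D}_k u_k=\mathcal{D}_k\mathcal{D}_l u_k+[\mathcal{D}_l,\mathcal{D}_k]u_k
=\mathcal{D}_k\mathcal{D}_k u_l+\mathcal{R}_{lk}u_k,
\end{align*}
where the first step uses the curvature identity $[\mathcal{D}_l,\mathcal{D}_k]=\mathcal{R}_{lk}$ and the second uses zero torsion $\mathcal{D}_l u_k=\mathcal{D}_k u_l$ again. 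Summing in $k$ and collecting terms yields exactly \eqref{2.10} (with $\alpha$ replaced by $\varepsilon$, which is apparently a typo in the statement).

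For the initial data, I would simply specialize the definition $u_\alpha=\langle\partial_\alpha\boldsymbol{m},X\rangle+i\langle\partial_\alpha\boldsymbol{m},Y\rangle$ to $t=0$ and $\alpha=1,2,3$, giving $u(0)=\langle\nabla\boldsymbol{m}_0,X\rangle+i\langle\nabla\boldsymbol{m}_0,Y\rangle$. The Morrey regularity $u(0)\in M^{p,q,\infty}(\mathbb{R}^3;\mathbb{C}^3)$ follows because $\boldsymbol{m}_0-\boldsymbol{m}_\infty\in M^{p,q,\infty}$ by assumption and because the chosen orthonormal frame $\{X,Y\}$ inherits the same regularity along $\boldsymbol{m}$ by the moving frames construction recalled from \cite{Beje}.

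I do not expect a real obstacle here: the derivation is a purely formal computation, and the two identities in Proposition \ref{pro2.1} were precisely set up to make it go through cleanly. The only thing to be mildly careful about is bookkeeping of indices when commuting $\mathcal{D}_l$ past $\mathcal{D}_k$ inside the sum, and making sure that the $v_k u_k$ term is not disturbed (since it does not come from a covariant derivative, there is nothing to commute). The regularity statement for $u(0)$ is the only nontrivial point, and it is handled by the already cited regularization of $(X,Y)$.
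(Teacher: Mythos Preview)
Your proposal is correct and follows essentially the same route as the paper: apply $\mathcal{D}_l$ to the first identity in Proposition~\ref{pro2.1}, use zero torsion $\mathcal{D}_l u_0=\mathcal{D}_0 u_l$ on the left, and on the right commute $\mathcal{D}_l\mathcal{D}_k u_k=\mathcal{D}_k\mathcal{D}_k u_l+\mathcal{R}_{lk}u_k$ via the curvature and zero torsion identities. Your remark that $\alpha$ should be $\varepsilon$ is also correct, and you additionally spell out the initial data and its regularity, which the paper's proof leaves implicit.
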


\begin{proof}
By the conclusion of \eqref{covariant-main-eq}, one can easily obtain that
\begin{align*}
\mathcal{D}_{0}u_l=\mathcal{D}_lu_{0}
=\mathcal{D}_l\left[(\varepsilon-i)\sum_{k=1}^{3}\mathcal{D}_{k}u_k-\sum_{k=1}^{3}v_ku_k\right]
=(\varepsilon-i)\sum_{k=1}^{3}\mathcal{D}_l(\mathcal{D}_{k}u_k),
\end{align*}
which further yields that
\begin{align*}
\mathcal{D}_{0}u_l+\mathcal{D}_l\sum_{k=1}^{3}(v_{k} u_{k})=(\varepsilon-i)\sum_{k=1}^{3}(\mathcal{D}_{k} \mathcal{D}_{k}u_l
+\mathcal{R}_{l k} u_{k}),
\end{align*}
where $l\in\{1,2,3\}$.
\end{proof}

\subsection{The Coulomb gauge}
Notice that the covariant LLS equation \eqref{covariant-main-eq} is invariant with respect to the individual choice of the moving orthonormal frame $\{X,Y\}$.
The local rotation of $\{X,Y\}$ is expressed by a rotation angle $\theta:\mathbb{R}^3\times[0,T]\rightarrow\mathbb{R}$ from the gauge transformation \cite[Proposition 2.3]{Beje}. The common gauge includes the Coulomb gauge and the Lorentz gauge, here we choose the former in terms of the feature of equations.
\begin{lemma}\label{Lemma2.2}
There exists a suitable Coulomb gauge \eqref{2.13} such that equation \eqref{covariant-main-eq} satisfies the invariance.
\begin{eqnarray}\label{2.13}
\left\{\begin{array}{ll}
\!\!\!u_{\alpha}\mapsto e^{-i\theta} u_{\alpha},\\
\!\!\!a_{\alpha}\mapsto a_{\alpha}+\partial_{\alpha}\theta,
\end{array}\right.
\end{eqnarray}
where $\theta:\mathbb{R}^3\times[0,T]\rightarrow\mathbb{R}$.
\end{lemma}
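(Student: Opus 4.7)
The plan has two stages: first, verify that the transformation (2.13) leaves the covariant system (2.12) form-invariant for \emph{any} choice of smooth $\theta$; second, choose $\theta$ specifically so that the transformed connection $\tilde a_\alpha := a_\alpha + \partial_\alpha\theta$ satisfies the Coulomb condition $\sum_{k=1}^{3}\partial_k \tilde a_k = 0$. The first stage is essentially an algebraic check. The key observation is that the covariant derivative transforms covariantly: writing $\tilde u_\alpha := e^{-i\theta}u_\alpha$ and $\tilde{\mathcal{D}}_\alpha := \partial_\alpha + i\tilde a_\alpha$, a direct computation gives
\begin{equation*}
\tilde{\mathcal{D}}_\alpha \tilde u_\beta = \partial_\alpha(e^{-i\theta}u_\beta) + i(a_\alpha + \partial_\alpha\theta)e^{-i\theta}u_\beta = e^{-i\theta}\mathcal{D}_\alpha u_\beta,
\end{equation*}
and by iteration $\tilde{\mathcal{D}}_\alpha\tilde{\mathcal{D}}_\beta\tilde u_\gamma = e^{-i\theta}\mathcal{D}_\alpha\mathcal{D}_\beta u_\gamma$. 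Plugging these identities into the three equations of (2.12) and using that $v_k$ is a real coefficient independent of the frame, each side acquires the same overall factor $e^{-i\theta}$, while the curvature relation $\partial_\alpha \tilde a_\beta - \partial_\beta \tilde a_\alpha = \partial_\alpha a_\beta - \partial_\beta a_\alpha = \operatorname{Im}(u_\alpha\bar u_\beta) = \operatorname{Im}(\tilde u_\alpha\bar{\tilde u}_\beta)$ is manifestly preserved. Hence (2.12) is invariant under (2.13).

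For the second stage, imposing $\sum_{k}\partial_k \tilde a_k = 0$ reduces to the Poisson equation
\begin{equation*}
\Delta\theta = -\sum_{k=1}^{3}\partial_k a_k \qquad \text{on } \mathbb{R}^{3},
\end{equation*}
whose right-hand side is determined by the original frame data. I would solve this by the Riesz potential representation $\theta = c\, I_2\bigl(\sum_k \partial_k a_k\bigr)$, and then invoke Lemma 1.1 to place $\nabla\theta$ in the appropriate Morrey space so that $\tilde a_\alpha$ inherits the regularity we need later in the analysis. Since the frame pair $\{X,Y\}$ arising from the construction in Subsection 4.1 yields $a_\alpha \in C^0([0,T];M^{p,q,\infty}(\mathbb{R}^3;\mathbb{R}))$, the source $\sum_k \partial_k a_k$ lies in a Morrey class to which Lemma 1.1 applies, producing a solution $\theta$ of the required regularity.

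The step I expect to be the main obstacle is justifying existence of $\theta$ at the right level of regularity and decay: on $\mathbb{R}^3$ the Poisson equation has solutions only up to harmonic polynomials, and one must verify that the Riesz-potential solution belongs to the correct Morrey space so that the transformed $(\tilde u_\alpha, \tilde a_\alpha)$ still falls within the functional framework under which (2.12) was derived. Once this is settled, the Coulomb condition $\operatorname{div}\tilde a = 0$ holds and, together with the invariance established in the first stage, the lemma follows.
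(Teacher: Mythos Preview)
Your proposal is correct, but it proceeds by a genuinely different route than the paper's. You take the gauge transformation \eqref{2.13} as given and verify algebraically that $\tilde{\mathcal{D}}_\alpha\tilde u_\beta = e^{-i\theta}\mathcal{D}_\alpha u_\beta$, which immediately shows form-invariance of all three equations in \eqref{covariant-main-eq}; you then solve the Poisson equation $\Delta\theta = -\operatorname{div} a$ to achieve the Coulomb condition. The paper instead works geometrically: it rotates the moving frame $\{X,Y\}$ by an angle $\theta$ to obtain $\{X',Y'\}$, computes from the definition $a'_\alpha = \langle\partial_\alpha X',Y'\rangle$ that $a'_\alpha = a_\alpha + \partial_\alpha\theta$, and then \emph{derives} the required transformation of $u_\alpha$ by positing $u'_\alpha = s(\theta)u_\alpha$ and solving the ODE $s'(\theta)+is(\theta)=0$ forced by the zero-torsion identity, obtaining $s(\theta)=e^{-i\theta}$. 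Your approach is the standard gauge-theoretic one and is arguably cleaner, since it checks invariance of the full system (not just the zero-torsion relation) in one stroke and bundles in the existence of the Coulomb gauge. The paper's approach has the advantage of explaining \emph{where} the transformation law \eqref{2.13} comes from---namely, from an actual rotation of the orthonormal frame along $\boldsymbol{m}$---which connects the abstract gauge freedom to the geometric picture of moving frames. Note that the paper defers the Poisson-equation step to the paragraph following the proof, whereas you fold it into the lemma; either organization is fine.
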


\begin{proof}
 We need to choose a suitable gauge and then verify the invariance of the Coulomb gauge. Firstly, construct a new coordinate system $\{X',Y'\}$, that is,
\begin{eqnarray}\label{2.12}
\left\{\begin{array}{ll}
\!\!\!X'=\cos{\theta}X+\sin{\theta}Y,\\
\!\!\!Y'=-\sin{\theta}X+\cos{\theta}Y.
\end{array}\right.
\end{eqnarray}
Assume that there exists $a'_{\alpha},~u'_{\alpha}$  satisfying equation \eqref{covariant-main-eq}, and we first suppose that
\begin{align*}
a'_{\alpha}=\langle\partial_{\alpha}X',Y'\rangle.
\end{align*}
Substituting \eqref{2.12} into the above equality implies that
\begin{align*}
a'_{\alpha}=\langle \partial_{\alpha}(\cos{\theta}X+\sin{\theta}Y),
-\sin{\theta}X+\cos{\theta}Y\rangle
=a_{\alpha}+\partial_{\alpha}\theta.
\end{align*}
Next, we suppose that $a'_{\alpha}$ and $u'_{\alpha}$ satisfy \eqref{covariant-main-eq}, i.e.,
\begin{align*}
\mathcal{D}'_\alpha u'_\beta=\mathcal{D}'_\beta u'_\alpha,
\end{align*}
where $\mathcal{D}'_\alpha=\partial_\alpha+ia'_\alpha$, then we have
\begin{align*}
(\partial_{\alpha}+ia'_{\alpha})u'_{\beta}=(\partial_{\beta}+ia'_{\beta})u'_{\alpha}.
\end{align*}
It is easy to get
\begin{align}
\mathcal{D}_\alpha u_\beta+i\partial_{\alpha}\theta u'_{\beta}=\mathcal{D}_\beta u_\alpha+i\partial_{\beta}\theta u'_{\alpha}.\label{2.14}
\end{align}
We want to prove that \eqref{2.14} holds for any $\alpha,\beta$ in view of the invariance. Therefore, we may further assume that $u'_{\alpha}=s(\theta)u(\alpha)$, where $s(\theta)$ is the function related to $\theta$. Recalculating \eqref{2.14}, one has
\begin{align*}
[s'(\theta)+is(\theta)][\partial_{\beta}\theta u_{\alpha}-\partial_{\alpha}\theta u_{\beta}]=0,
\end{align*}
which implies that $s'(\theta)+is(\theta)=0$. By the separate variable method, we have
\begin{align*}
s(\theta)=e^{-i\theta}u.
\end{align*}
Combining the above arguments, we can choose the Coulomb gauge \eqref{2.13} such that \eqref{covariant-main-eq} still preserve invariance.
\end{proof}

For simplicity, we write $a_{\alpha},~u_{\alpha}$ instead of $a'_{\alpha},u'_{\alpha}$. Observe that the Coulomb gauge has the following characteristics:
\begin{align*}
{\rm div}{a^{\ast}}=\sum_{k=1}^3\partial_{k}a^{\ast}_{l}=0.
\end{align*}
Substituting $a(t)$ into the above equality, we can obtain an elliptic equation:
\begin{align}
-\Delta\theta(t)={\rm div}a(t),\label{ellip-equa}
\end{align}
which implies that $\theta$ solves equation \eqref{ellip-equa} when $a(t)\in C^{0}([0,T];M^{2,3,\infty}(\mathbb{R}^3))$, where
\begin{align*}
\nabla \theta\in C^{0}([0,T];M^{2,3,\infty}(\mathbb{R}^3)).
\end{align*}

\subsection{The energy estimate of  a(t)}
Recalling equation \eqref{covariant-main-eq}, for any $\alpha, \beta=1,2,3$, we have
\begin{align*}
\partial_\alpha a_\beta-\partial_\beta a_\alpha={\rm Im}(u_{\alpha}\bar{u}_{\beta}).
\end{align*}
Under the Coulomb gauge, it holds that
\begin{align}
-\Delta a_{\alpha}=\sum_{k=1}^3 \partial_k{\rm Im}(u_{\alpha} \bar{u}_k).\label{a-u}
\end{align}
Taking into account the estimate of $u(t)$ in later sections, it is necessary to obtain the corresponding estimates for $a(t)$ in view of \eqref{a-u}.
\begin{lemma}\label{Lemma2.1}
Assume that $a(t)\in M^{p,3}(\mathbb{R}^{3}),~\nabla u(t)\in M^{3,3}(\mathbb{R}^{3})$, for any $2<p<6,~0\leq t\leq T$, then we obtain
\begin{align}\label{2.1}
\|a(t)\|_{M^{\frac{3p}{6-p},3}(\mathbb{R}^{3})}\leq\|u(t)\|^{2}_{M^{p,3}(\mathbb{R}^{3})}.
\end{align}
Moreover, for any $3<p<6$, there exists the decomposition of the term $a(t)$ such that
\begin{align}\label{2.2}
\begin{split}
\|a_{0}^{(1)}(t)\|_{M^{p,3}(\mathbb{R}^{3})}
&\leq\|u(t)\|_{M^{p,3}(\mathbb{R}^{3})}\|\nabla u(t)\|_{M^{3,3}(\mathbb{R}^{3})},\\
\|a_{0}^{(2)}(t)\|_{M^{\frac{3p}{12-2p},3}(\mathbb{R}^{3})}
&\leq\|u(t)\|^{4}_{M^{p,3}(\mathbb{R}^{3})}+\|u(t)\|^{2}_{M^{p,3}(\mathbb{R}^{3})}\|v(t)\|_{M^{\frac{3p}{6-p},3}(\mathbb{R}^{3})},
\end{split}
\end{align}
where $a_{0}(t)=a_{0}^{(1)}(t)+a_{0}^{(2)}(t)$.
\end{lemma}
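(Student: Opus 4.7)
The plan is to read \eqref{a-u} as a linear elliptic equation in $a_\alpha$ with right-hand side depending on $u$, invert the Laplacian, and then reduce everything to the Riesz-potential estimate in Lemma \ref{Lemma1.1}. Schematically, $-\Delta a_\alpha = \operatorname{div}\operatorname{Im}(u_\alpha\bar{u})$, so
\[
a_\alpha \;=\; \sum_{k=1}^{3}\partial_k(-\Delta)^{-1}\operatorname{Im}(u_\alpha\bar{u}_k),
\]
and the operator $\partial_k(-\Delta)^{-1}$ acts on scales like the Riesz potential $I_1$. Hence bounding $a(t)$ amounts to bounding $I_1(u\otimes u)$, etc. All product bounds will use the Morrey H\"older inequality $\|fg\|_{M^{r,3}}\leq\|f\|_{M^{p_1,3}}\|g\|_{M^{p_2,3}}$ with $1/r=1/p_1+1/p_2$.

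For \eqref{2.1}: since $u(t)\in M^{p,3}$, H\"older gives $u\otimes u\in M^{p/2,3}$; the hypothesis $2<p<6$ is exactly $1<p/2<3$, which is the admissibility range in Lemma \ref{Lemma1.1} for $\alpha=1$, $q=3$. Applying it with input exponent $p/2$ yields $I_1(u\otimes u)\in M^{\tilde p,3}$ with $\tilde p = \tfrac{3(p/2)}{3-p/2}=\tfrac{3p}{6-p}$, which is precisely the target norm.

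For \eqref{2.2}: I first differentiate \eqref{a-u} in time,
\[
-\Delta a_0 \;=\; \sum_{k=1}^{3}\partial_k\operatorname{Im}\bigl(u_0\bar{u}_k+u\,\overline{(u_k)_0}\bigr),
\]
and then substitute $u_0$ from the first line of \eqref{covariant-main-eq}, namely
\[
u_0 \;=\; -\sum_{k=1}^{3} v_k u_k + (\varepsilon-i)\sum_{k=1}^{3}\bigl(\partial_k u_k + i a_k u_k\bigr).
\]
This splits the source into a ``derivative'' piece (terms containing $\nabla u$) and a ``multiplicative'' piece (terms containing $vu$ and $au$). Accordingly I set $a_0^{(1)}:=(-\Delta)^{-1}\operatorname{div}(\text{derivative piece})$ and $a_0^{(2)}:=(-\Delta)^{-1}\operatorname{div}(\text{multiplicative piece})$, so that $a_0=a_0^{(1)}+a_0^{(2)}$.

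For $a_0^{(1)}$, the source is (schematically) $\partial(u\cdot\nabla u)$; H\"older gives $u\cdot\nabla u\in M^{q,3}$ with $1/q=1/p+1/3$, and the condition $1<q<3$ reduces to $p>3/2$, which is implied by $3<p<6$. Lemma \ref{Lemma1.1} then outputs the exponent $\tilde q$ with $1/\tilde q=1/q-1/3=1/p$, giving the first bound $\|a_0^{(1)}\|_{M^{p,3}}\lesssim\|u\|_{M^{p,3}}\|\nabla u\|_{M^{3,3}}$. For $a_0^{(2)}$, I first insert \eqref{2.1} to bound $a\in M^{3p/(6-p),3}$ by $\|u\|_{M^{p,3}}^2$; then $u\cdot a\cdot u$ and $u\cdot v\cdot u$ both lie in $M^{r,3}$ with $1/r=2/p+(6-p)/(3p)=(12-p)/(3p)$, i.e.\ $r=3p/(12-p)$, and $1<r<3$ holds exactly for $3<p<6$. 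Applying Lemma \ref{Lemma1.1} a second time gives $1/\tilde r=1/r-1/3=(12-2p)/(3p)$, so $\tilde r=\tfrac{3p}{12-2p}$, together with the desired quartic/quadratic product bound.

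The main obstacle I anticipate is purely bookkeeping: tracking which Morrey exponent each factor carries, verifying the admissibility $1<\text{(source exponent)}<3$ for every application of Lemma \ref{Lemma1.1}, and justifying that the nonlocal operator $\partial_k(-\Delta)^{-1}$ inherits the scale-invariant mapping properties of $I_1$ on Morrey spaces (i.e.\ the accompanying Riesz transform is bounded on $M^{p,3}$), which is standard but must be invoked at the right place. No essential new idea is required beyond this combination of the structural identity \eqref{a-u}, the substitution of $u_0$ from the covariant LLS system, and two applications of the Morrey-space Riesz potential estimate.
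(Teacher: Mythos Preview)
Your proposal is essentially the paper's proof: invert the Laplacian in \eqref{a-u}, recognize $\partial_k(-\Delta)^{-1}$ as a Riesz transform composed with $I_1$, and feed the products into Lemma~\ref{Lemma1.1} with the exponent bookkeeping you describe. The estimates for $a$, $a_0^{(1)}$, and $a_0^{(2)}$ and the admissibility checks on $p$ all match.

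There is one slip worth correcting. The elliptic equation for $a_0$ is \emph{not} obtained by differentiating \eqref{a-u} in time; that would give an equation for $\partial_t a_\alpha$, which is a different object from the connection coefficient $a_0=\langle\partial_t X,Y\rangle$. The correct route is the curvature identity $\partial_0 a_k-\partial_k a_0=\operatorname{Im}(u_0\bar u_k)$: take the spatial divergence and use the Coulomb gauge $\operatorname{div}a=0$ to get
\[
-\Delta a_0=\sum_{k=1}^{3}\partial_k\operatorname{Im}(u_0\bar u_k),
\]
with only the single term $u_0\bar u_k$ on the right (your extra term $u\,\overline{(u_k)_0}$ should not be there). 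After this fix, your substitution of $u_0$ from the first line of \eqref{covariant-main-eq}, the splitting into derivative and multiplicative pieces, and the two applications of Lemma~\ref{Lemma1.1} are exactly what the paper does.
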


\begin{proof}
We can easily get that
\begin{align*}
{\rm div}({\rm Im}(u_{\alpha}\bar{u}))=\sum_{k=1}^{3}\partial_{k}{\rm Im}(u_{\alpha} \bar{u}_{\beta})=-\Delta a_{\alpha}(t).
\end{align*}
By the definition of the Riesz potential, we have
\begin{align*}
a_{\alpha}(t)=(-\Delta)^{-\frac{1}{2}}\sum_{k=1}^3\mathcal{R}_{k}{\rm Im}(u_{\alpha}\bar{u}_{k}),
\end{align*}
where $\mathcal{R}_{k}$ are the Riesz transforms and $(-\Delta)^{-\frac{1}{2}}$ is the Riesz potential corresponding to the Fourier multiplier $\frac{1}{|\xi|}$. Obviously, one has
\begin{align*}
a(t)\approx I_{1}(|u(t)|^{2}).
\end{align*}

Using Lemma \ref{Lemma1.1}, we obtain
\begin{align*}
\|a(t)\|_{M^{\frac{3p}{6-p},3}(\mathbb{R}^{3})}\leq\||u(t)|^{2}\|_{M^{\frac{p}{2},3}(\mathbb{R}^{3})}\leq\|u(t)\|^{2}_{M^{p,3}(\mathbb{R}^{3})},
\end{align*}
then we obtain \eqref{2.1}.

Suppose that $3<p<6$, similar to the arguments of \eqref{2.1}, we write
\begin{align*}
-\Delta a_{0}^{(1)}(t)&={\rm div}(\varepsilon {\rm Im}(\bar{u}{\rm div}u)-{\rm Re}(\bar{u}{\rm div}u)),\\
-\Delta a_{0}^{(2)}(t)&={\rm div}(\varepsilon {\rm Re}(\bar{u}(a\cdot u))+{\rm Im}(\bar{u}(a\cdot u))-{\rm Im}( \bar{u}(v\cdot u))).
\end{align*}
Based on the definition of Riesz potential, we can obtain
\begin{align*}
a_{0}^{(1)}(t)\approx I_{1}(|\bar{u}||\nabla u(t)|),\quad
a_{0}^{(2)}(t)\approx I_{1}(|u(t)|^2|a(t)|+|u(t)|^2|v(t)|).
\end{align*}
Combining the two equivalent relationships with Lemma \ref{Lemma1.1}, one has
\begin{align*}
\|a_{0}^{(1)}(t)\|_{M^{p,3}(\mathbb{R}^{3})}\leq\||\bar{u}||\nabla u(t)|\|_{M^{\frac{3p}{p+3},3}(\mathbb{R}^{3})}
\leq\|u(t)\|_{M^{p,3}}\|\nabla u(t)\|_{M^{3,3}(\mathbb{R}^{3})}.
\end{align*}
Similarly, it holds
\begin{align*}
\|a_{0}^{(2)}(t)\|_{M^{\frac{3p}{12-2p},3}(\mathbb{R}^{3})}&\leq\||u|^2|a|\|_{M^{\frac{3p}{12-p},3}(\mathbb{R}^{3})}
+\||u|^2|v|\|_{M^{\frac{3p}{12-p},3}(\mathbb{R}^{3})}\\
&\leq\|u(t)\|^2_{M^{p,3}(\mathbb{R}^{3})}\|a(t)\|_{M^{\frac{3p}{6-p},3}(\mathbb{R}^{3})}+\|u(t)\|^2_{M^{p,3}(\mathbb{R}^{3})}
\|v(t)\|_{M^{\frac{3p}{6-p},3}(\mathbb{R}^{3})}\\
&\leq\|u(t)\|^4_{M^{p,3}(\mathbb{R}^{3})}+\|u(t)\|^2_{M^{p,3}(\mathbb{R}^{3})}\|v(t)\|_{M^{\frac{3p}{6-p},3}(\mathbb{R}^{3})}.
\end{align*}
Obviously, we complete the proof of \eqref{2.2}.
\end{proof}

\section{Estimates for the covariant complex Ginzburg-Landau equation}\label{Sec5}
In this section, we establish the linear estimate and the nonlinear estimate to address $\|\nabla \boldsymbol{m}\|_{L^{\infty}(\mathbb{R}^{n})}<\infty$, which is our main task to extend the local solution. Meanwhile, we utilize Duhamel's principle to solve the covariant complex Ginzburg-Landau equation. Assume that $u\in C^{0}\left([0,T];M^{2,3,\infty}(\mathbb{R}^3;\mathbb{C}^3)\right)$.

\subsection{The nonlinear estimate}
Equation \eqref{2.10} can be simplified by ${\rm div} a(t)=0$, and it holds that
\begin{align}
\frac{\partial u_l}{\partial t}=(\varepsilon-i)\Delta u_l+F_l({\bf a},u),~l=1,2,3.\label{3.1}
\end{align}
where
\begin{align*}
F_l({\bf a},u)=(\varepsilon-i)\left[\sum_{k=1}^{3}i{\rm Im} (u\bar{u_k})u_k+2i(a\cdot\nabla)u_{l}-|a|^2u_{l}\right]-ia_{0}u_{l}-(\partial_l+ia_l)(v\cdot u).
\end{align*}
Substituting $u=(u_1,u_2,u_3),~a_{0}(t)=a_{0}^{(1)}(t)+a_{0}^{(2)}(t)$ into \eqref{3.1}, we obtain $\frac{\partial u}{\partial t}
=(\varepsilon-i)\Delta u+F({\bf a},u)$. More precisely, we can easily observe that the estimate of $|a(t)|$
can be roughly viewed as the estimate of $|u(t)|^{2}$ in the Morrey space by Lemma \ref{Lemma2.1}. Similarly, the estimate of $|a_0^{(1)}(t)|$
is approximately equal to the product of $|u(t)|$ and $|\nabla u(t)|$, and the estimate of $|a_0^{(2)}(t)|$ can be considered as $|u(t)|^{4}$.
Hence, we can divide $F({\bf a},u)$ into four parts:
\begin{align*}
F({\bf a},u)=F^{(1)}+F^{(2)}+F^{(3)}+\nabla g=:f(t)-\nabla g(t),
\end{align*}
specifically,
\begin{align*}
&f=:F^{(1)}+F^{(2)}+F^{(3)},\\
&F^{(1)}:=(\varepsilon-i)\sum_{k=1}^{3}i{\rm Im}(u\bar{u_k})u_k,\\
&F^{(2)}:=2i(\varepsilon-i)(a\cdot\nabla)u-ia_0^{(1)}u,\\
&F^{(3)}:=-(\varepsilon-i)|a|^2u-ia_0^{(2)}u-ia(v\cdot u),\\
&\nabla g:=\nabla (v\cdot u).
\end{align*}

\begin{lemma}\label{Lemma3.1}
Assume that $0<p<6$, $0\leq t \leq T$, we have
\begin{align*}
&\|F^{(1)}(t)\|_{M^{\frac{p}{3},3}(\mathbb{R}^{3})}\leq \|u(t)\|^3_{M^{p,3}(\mathbb{R}^{3})},\\
&\|F^{(2)}(t)\|_{M^{\frac{p}{2},3}(\mathbb{R}^{3})}\leq \|u(t)\|^2_{M^{p,3}(\mathbb{R}^{3})}\|\nabla u(t)\|_{M^{3,3}(\mathbb{R}^{3})},\\
&\|F^{(3)}(t)\|_{M^{\frac{3p}{15-2p},3}(\mathbb{R}^{3})}\leq\|u(t)\|^5_{M^{p,3}(\mathbb{R}^{3})}+\|u(t)\|^3_{M^{p,3}(\mathbb{R}^{3})}
\|v(t)\|_{M^{\frac{3p}{6-p},3}(\mathbb{R}^{3})}.
\end{align*}
\end{lemma}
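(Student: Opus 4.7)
The plan is to estimate each of $F^{(1)}, F^{(2)}, F^{(3)}$ separately by combining the pointwise algebraic structure of each term with H\"older's inequality in Morrey spaces and the auxiliary bounds on $a(t)$ and $a_0(t)$ provided by Lemma \ref{Lemma2.1}. The overall principle is: on each summand, bound the pointwise magnitude by a product of factors whose Morrey norms we control, then apply the Morrey H\"older inequality $\|fgh\|_{M^{r,3}}\leq\|f\|_{M^{p_1,3}}\|g\|_{M^{p_2,3}}\|h\|_{M^{p_3,3}}$ with $\frac{1}{r}=\frac{1}{p_1}+\frac{1}{p_2}+\frac{1}{p_3}$, choosing the $p_i$'s so that the resulting exponent matches the one in the statement.

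For the cubic term $F^{(1)}$, observe that $|F^{(1)}|\lesssim |u|^{3}$ pointwise. Applying the Morrey H\"older inequality with three factors in $M^{p,3}(\mathbb{R}^{3})$ immediately yields the bound in $M^{p/3,3}(\mathbb{R}^{3})$. For $F^{(2)}$, I would split into its two summands. For $(a\cdot\nabla)u$, invoke the first estimate of Lemma \ref{Lemma2.1} giving $\|a\|_{M^{3p/(6-p),3}}\lesssim\|u\|_{M^{p,3}}^{2}$ and pair it with $\|\nabla u\|_{M^{3,3}}$; since $\frac{6-p}{3p}+\frac{1}{3}=\frac{2}{p}$, this lands in $M^{p/2,3}$ with exactly the claimed bound. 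For $a_{0}^{(1)}u$, use $\|a_{0}^{(1)}\|_{M^{p,3}}\lesssim\|u\|_{M^{p,3}}\|\nabla u\|_{M^{3,3}}$ from Lemma \ref{Lemma2.1} and pair it with $\|u\|_{M^{p,3}}$; the exponents combine as $\frac{1}{p}+\frac{1}{p}=\frac{2}{p}$, again giving $M^{p/2,3}$.

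The term $F^{(3)}$ is the most involved and splits into three pieces, each of which should land in $M^{3p/(15-2p),3}(\mathbb{R}^{3})$. For $|a|^{2}u$, apply $\|a\|_{M^{3p/(6-p),3}}^{2}\lesssim\|u\|_{M^{p,3}}^{4}$ together with $\|u\|_{M^{p,3}}$; the exponent check is $\frac{2(6-p)}{3p}+\frac{1}{p}=\frac{15-2p}{3p}$, producing the $\|u\|_{M^{p,3}}^{5}$ contribution. For $a_{0}^{(2)}u$, use the second bound from Lemma \ref{Lemma2.1} on $\|a_{0}^{(2)}\|_{M^{3p/(12-2p),3}}$ and pair with $\|u\|_{M^{p,3}}$; since $\frac{12-2p}{3p}+\frac{1}{p}=\frac{15-2p}{3p}$, this gives exactly $\|u\|_{M^{p,3}}^{5}+\|u\|_{M^{p,3}}^{3}\|v\|_{M^{3p/(6-p),3}}$. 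For the remaining term $a(v\cdot u)$, use H\"older on three factors $a$, $v$ and $u$ with exponents $\frac{6-p}{3p}+\frac{6-p}{3p}+\frac{1}{p}=\frac{15-2p}{3p}$ and then replace $\|a\|_{M^{3p/(6-p),3}}$ by $\|u\|_{M^{p,3}}^{2}$ using \eqref{2.1}; this produces the $\|u\|_{M^{p,3}}^{3}\|v\|_{M^{3p/(6-p),3}}$ contribution. Summing the three pieces gives the stated estimate.

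The main obstacle I anticipate is bookkeeping rather than depth: one must verify carefully that all the scaling exponents $\frac{1}{r}=\sum\frac{1}{p_i}$ come out equal to the target $\frac{15-2p}{3p}$ (resp.\ $\frac{2}{p}$, $\frac{3}{p}$) in each subterm, and that the range $0<p<6$ (in particular $p<6$ so that $\frac{3p}{6-p}$ is finite and positive, and $p<15/2$ so that $\frac{3p}{15-2p}$ is positive) is consistent with the hypotheses of Lemma \ref{Lemma2.1} wherever it is invoked; this restricts one implicitly to $3<p<6$ when $a_{0}^{(1)}$ and $a_{0}^{(2)}$ appear, which is compatible with the proposition. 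Modulo this exponent accounting and the use of the Morrey H\"older inequality (which follows from the standard H\"older inequality applied on each ball $B_{r}(x)$ followed by taking the supremum defining the Morrey norm), the proof is a direct application of Lemma \ref{Lemma2.1}.
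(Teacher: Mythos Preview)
Your proposal is correct and follows essentially the same approach as the paper: bound each summand of $F^{(i)}$ pointwise, apply H\"older's inequality in Morrey spaces with the indicated exponent splittings, and invoke Lemma~\ref{Lemma2.1} to replace norms of $a$, $a_0^{(1)}$, $a_0^{(2)}$ by the corresponding powers of $\|u\|_{M^{p,3}}$, $\|\nabla u\|_{M^{3,3}}$ and $\|v\|_{M^{3p/(6-p),3}}$. Your exponent bookkeeping and the observation about the implicit restriction $3<p<6$ coming from Lemma~\ref{Lemma2.1} are both accurate and in fact spelled out more carefully than in the paper's own proof.
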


\begin{proof}
By Lemma \ref{Lemma3.1} and H\"{o}lder's inequality, one has
\begin{align*}
\|F^{(1)}(t)\|_{M^{\frac{p}{3},3}(\mathbb{R}^{3})}\leq \||u(t)|^3\|_{M^{\frac{p}{3},3}(\mathbb{R}^{3})}=\|u(t)\|^3_{M^{p,3}(\mathbb{R}^{3})}.
\end{align*}
Using the same method, we can obtain
\begin{align*}
\|F^{(2)}(t)\|_{M^{\frac{p}{2},3}(\mathbb{R}^{3})}&\leq\||a(t)|\;|\nabla
u(t)|\|_{M^{\frac{p}{2},3}(\mathbb{R}^{3})}+\||a^{(1)}_0(t)|\;|u(t)|\|_{M^{\frac{p}{2},3}(\mathbb{R}^{3})}\\
&\leq \|a(t)\|_{M^{\frac{3p}{6-p},3}(\mathbb{R}^{3})}\|\nabla u(t)\|_{M^{3,3}(\mathbb{R}^{3})}+\|a^{(1)}_0(t)\|_{M^{p,3}(\mathbb{R}^{3})}
\|u(t)\|_{M^{p,3}(\mathbb{R}^{3})}\\
&\leq \|u(t)\|^2_{M^{p,3}(\mathbb{R}^{3})}\|\nabla u(t)\|_{M^{3,3}(\mathbb{R}^{3})},\\
\|F^{(3)}(t)\|_{M^{\frac{3p}{15-2p},3}(\mathbb{R}^{3})}
&\leq\||a(t)|^2\|_{M^{\frac{3p}{12-2p},3}(\mathbb{R}^{3})}\|u(t)\|_{M^{p,3}(\mathbb{R}^{3})}
+\|a^{(2)}_0(t)\|_{M^{\frac{3p}{12-2p},3}(\mathbb{R}^{3})}\|u(t)\|_{M^{p,3}(\mathbb{R}^{3})}\\
&\quad+\|a(t)\|_{M^{\frac{3p}{6-p},3}(\mathbb{R}^{3})}\|u(t)\|_{M^{p,3}(\mathbb{R}^{3})}\|v(t)\|_{M^{\frac{3p}{6-p},3}(\mathbb{R}^{3})}\\
&\leq\|u(t)\|^5_{M^{p,3}(\mathbb{R}^{3})}+\|u(t)\|^3_{M^{p,3}(\mathbb{R}^{3})}\|v(t)\|_{M^{\frac{3p}{6-p},3}(\mathbb{R}^{3})}.
\end{align*}
Then, we get the desired estimates.
\end{proof}

\subsection{Duhamel's principle}
Assume that $u$ is the solution of \eqref{3.1} with the initial data $u_{0}$, then  equation \eqref{3.1} can be expressed by
\begin{eqnarray*}
\left\{\begin{array}{ll}
\!\!\! \frac{\partial u}{\partial t}=(\varepsilon-i)\Delta u+f-\nabla g,\\
\!\!\! u(0)=u_{0}.\\
\end{array}\right.
\end{eqnarray*}
Using the Fourier transform, we have
\begin{eqnarray}\label{sys1.1}
\left\{\begin{array}{ll}
\!\!\!\mathcal{F}(\frac{\partial u}{\partial t})=(i-\varepsilon)|\xi|^2\mathcal{F}(u)(\xi)+\mathcal{F}(f)(\xi)-\mathcal{F}(\nabla g)(\xi),\\
\!\!\!\mathcal{F}(u(0))=\mathcal{F}(u_{0})({\xi}).\\
\end{array}\right.
\end{eqnarray}
Applying Duhamel's principle to solve \eqref{sys1.1},  we obtain the expression of the solutions:
\begin{align}
\mathcal{F}(u(t))=\left[\int_0^t[\mathcal{F}(f(s))-\mathcal{F}(\nabla g(s))]e^{(\varepsilon-i)|\xi|^2s}ds
+\mathcal{F}(u_{0})\right]e^{(i-\varepsilon)|\xi|^2t}.\label{3.11}
\end{align}
Next, we define the semigroup by $S=S(t)$ which is generated by $(\varepsilon-i)\Delta$ and it is well-defined.
Calculating \eqref{3.11} by the inverse fourier transform and representing it by the semigroup, one has
\begin{align}
u(x,t)=S(t)u(0)+(S\ast f)(t)+(\nabla S\ast g)(t).\label{3.12}
\end{align}

\subsection{Semigroup estimate}
For simplify, let
\begin{align*}
K_1(t)&=\sup_{\tau\in(0,t)}\tau^{\frac{p-3}{2p}}\|u(\tau)\|_{M^{p,3}(\mathbb{R}^{3})},\\
K_2(t)&=\sup_{\tau\in(0,t)}\tau^{\frac{1}{2}}\|\nabla u(\tau)\|_{M^{3,3}(\mathbb{R}^{3})},\\
K_0(t)&=\|u(0)\|_{M^{3,3}(\mathbb{R}^{3})}.
\end{align*}
Moveover,
\begin{align*}
K(t)&=\max \left\{K_1(t),K_2(t)\right\},\\
V(t)&=\sup_{\tau\in(0,t)}\tau^{\frac{p-3}{p}}\|v(\tau)\|_{M^{\frac{3p}{6-p},3}(\mathbb{R}^{3})}.
\end{align*}
Besides, we recall a conclusion about Euler's integral. If constants $\theta,~\eta\in(0,1)$ and for any $t>0$, we know that
\begin{align*}
\int_{0}^{t}(t-s)^{-\theta}s^{-\eta}ds=ct^{1-\theta-\eta},
\end{align*}
where the constant $c=c(\theta,~\eta)$ is independently of $t$.

\begin{lemma}\label{Lemma3.2}
Assume that $3<p<5$, then we have
\begin{align}
\|u(t)\|_{M^{3,3}(\mathbb{R}^{3})}\leq K_0(t)+\left[R(t)\left(1+R(t)^2\right)\left(R(t)^2+V(t)\right)\right],\label{3.5}
\end{align}
where $t\in[0,T]$.
\end{lemma}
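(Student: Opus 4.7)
The plan is to estimate $\|u(t)\|_{M^{3,3}(\mathbb{R}^3)}$ directly from the Duhamel representation \eqref{3.12}, namely
\[
u(t)=S(t)u(0)+\int_{0}^{t}S(t-s)f(s)\,ds+\int_{0}^{t}\nabla S(t-s)g(s)\,ds,
\]
by splitting the convolution against $f=F^{(1)}+F^{(2)}+F^{(3)}$ into three pieces and handling the $\nabla S\ast g$ term separately. Each piece is bounded by combining the nonlinear estimates of Lemma \ref{Lemma3.1} with the semigroup estimates of Lemma \ref{Lemma1.2}(2), choosing the target exponent $\tilde p=3$. The linear term $\|S(t)u(0)\|_{M^{3,3}}\le CK_{0}(t)$ is immediate from Lemma \ref{Lemma1.2}(1).

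Concretely, applying Lemma \ref{Lemma1.2}(2) with source exponents $p/3$, $p/2$, $3p/(15-2p)$ (dictated by Lemma \ref{Lemma3.1}) produces the decay factors $(t-s)^{-(9-p)/(2p)}$, $(t-s)^{-(6-p)/(2p)}$, $(t-s)^{-3(5-p)/(2p)}$, respectively. Inserting the a priori bounds
\[
\|u(s)\|_{M^{p,3}}\le K(t)\,s^{-(p-3)/(2p)},\qquad \|\nabla u(s)\|_{M^{3,3}}\le K(t)\,s^{-1/2},\qquad \|v(s)\|_{M^{3p/(6-p),3}}\le V(t)\,s^{-(p-3)/p},
\]
the $s$-integrand in each convolution becomes $(t-s)^{-\alpha_i}s^{-\beta_i}$ with $\alpha_i,\beta_i\in(0,1)$ provided $3<p<5$; this is exactly where the stated range originates, since $\alpha_3=3(5-p)/(2p)$ requires $p<5$ to remain positive and integrable near $s=t$. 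A short arithmetic verification shows $\alpha_i+\beta_i=1$ in every one of the three cases, so Euler's Beta identity produces a constant independent of $t$ and contributes $K(t)^{3}$, $K(t)^{3}$, and $K(t)^{5}+K(t)^{3}V(t)$ from $F^{(1)},F^{(2)},F^{(3)}$, respectively.

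For the $\nabla S\ast g$ term I use Hölder in Morrey to write $\|g(s)\|_{M^{3p/(9-p),3}}\le\|v(s)\|_{M^{3p/(6-p),3}}\|u(s)\|_{M^{p,3}}$, and then Lemma \ref{Lemma1.2}(2) applied to $\nabla S$ yields the decay $(t-s)^{-1/2-(9-2p)/(2p)}=(t-s)^{-(9-p)/(2p)}$ with target $\tilde p=3$. The admissibility condition $\tilde p\in[p',p'(n+1)]$ is easily checked for $p\in(3,5)$, and the Beta-integral exponent sum $(9-p)/(2p)+3(p-3)/(2p)=1$ is again critical, giving a contribution bounded by $K(t)V(t)$.

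Collecting the four contributions yields
\[
\|u(t)\|_{M^{3,3}(\mathbb{R}^{3})}\le K_{0}(t)+C\bigl(K(t)^{3}+K(t)^{5}+K(t)^{3}V(t)+K(t)V(t)\bigr),
\]
and a direct factorisation $K(t)^{3}(1+K(t)^{2})+K(t)V(t)(1+K(t)^{2})=K(t)(1+K(t)^{2})(K(t)^{2}+V(t))$ rewrites the right-hand side in the form asserted in \eqref{3.5} (with $R(t)$ read as $K(t)$). The main obstacle is the meticulous bookkeeping of the six exponent pairs $(\alpha_i,\beta_i)$ and the Morrey-admissibility check $\tilde p\in[p',p'(n+1)]$; in particular, both endpoints of the hypothesised range $3<p<5$ are genuinely used, the lower for the integrability of $s^{-\beta_i}$ near $0$ and the upper for the integrability of $(t-s)^{-\alpha_3}$ near $s=t$.
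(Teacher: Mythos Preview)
Your proof is correct and follows essentially the same route as the paper: apply the Duhamel formula \eqref{3.12}, split $f=F^{(1)}+F^{(2)}+F^{(3)}$, bound each piece via Lemma~\ref{Lemma3.1} combined with the semigroup estimates of Lemma~\ref{Lemma1.2}(2) at target exponent $\tilde p=3$, and close each time integral with the Beta identity using the criticality $\alpha_i+\beta_i=1$. Your reading of $R(t)$ as $K(t)$ is the intended one, and your bookkeeping is in fact slightly cleaner than the paper's---the $F^{(2)}$ contribution is indeed $K_1^2K_2\le K^3$ (the paper records $K^2$, which is a slip), and this is exactly what is needed for the final factorisation $K(1+K^2)(K^2+V)$ to go through.
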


\begin{proof}
By the fact that the norm is $M^{3,3}(\mathbb{R}^{3})$, it holds
\begin{align*}
\|u(x,t)\|_{M^{3,3}(\mathbb{R}^{3})}&\leq\|S(t)u(0)\|_{M^{3,3}(\mathbb{R}^{3})}+\sum_{i=1}^3\|(S\ast F^{(i)})(t)\|_{M^{3,3}(\mathbb{R}^{3})} \\
&\quad+\|(\nabla S\ast g)(t)\|_{M^{3,3}(\mathbb{R}^{3})}.
\end{align*}
Using Lemma \ref{Lemma1.2}, Lemma \ref{Lemma3.1}, Euler's integral and Bochner's inequality to estimate the above five parts, we see that
\begin{align*}
\|S(t)u(0)\|_{M^{3,3}(\mathbb{R}^{3})}&\leq\|u(0)\|_{M^{3,3}(\mathbb{R}^{3})}= K_0(t),
\end{align*}
\begin{align*}
\|(S\ast F^{(1)})(t)\|_{M^{3,3}(\mathbb{R}^{3})}
&\leq\int_{0}^{t}\|S(t-s)F^{(1)}(s)\|_{M^{3,3}(\mathbb{R}^{3})}ds\\
&\leq\int_{0}^{t}(t-s)^{-\frac{9-p}{2p}}\|F^{(1)}(s)\|_{M^{\frac{p}{3},3}(\mathbb{R}^{3})}ds \\
&\leq\int_{0}^{t}(t-s)^{-\frac{9-p}{2p}}\|u(s)\|^3_{M^{p,3}(\mathbb{R}^{3})}ds\\
&\leq K_1^3(t)\int_{0}^{t}(t-s)^{-\frac{9-p}{2p}}s^{-\frac{3(p-3)}{2p}}ds  \\
&\leq K^3(t),
\end{align*}
\begin{align*}
\|(S\ast F^{(2)})(t)\|_{M^{3,3}(\mathbb{R}^{3})}
&\leq\int_{0}^{t}\|S(t-s)F^{(2)}(s)\|_{M^{3,3}(\mathbb{R}^{3})}ds\\
&\leq\int_{0}^{t}(t-s)^{-\frac{6-p}{2p}}\|F^{(2)}(s)\|_{M^{\frac{p}{2},3}(\mathbb{R}^{3})}ds \\
&\leq\int_{0}^{t}(t-s)^{-\frac{6-p}{2p}}\left[\|u(s)\|^2_{M^{p,3}(\mathbb{R}^{3})}\|\nabla u(s)\|_{M^{3,3}(\mathbb{R}^{3})}\right]ds\\
&\leq K_1(t)\cdot K_2(t)\int_{0}^{t}(t-s)^{-\frac{6-p}{2p}}s^{-\frac{1}{2}}s^{-\frac{2(p-3)}{2p}}ds  \\
&\leq K^2(t),
\end{align*}
\begin{align*}
\|(S\ast F^{(3)})(t)\|_{M^{3,3}(\mathbb{R}^{3})}
&\leq\int_{0}^{t}\|S(t-s)F^{(3)}(s)\|_{M^{3,3}(\mathbb{R}^{3})}ds\\
&\leq\int_{0}^{t}(t-s)^{-\frac{15-3p}{2p}}\|F^{(3)}(s)\|_{M^{\frac{3p}{15-2p},3}(\mathbb{R}^{3})}ds \\
&\leq\int_{0}^{t}(t-s)^{-\frac{15-3p}{2p}}
\left[\|u(s)\|^3_{M^{p,3}(\mathbb{R}^{3})}\|v(s)\|_{M^{{\frac{3p}{6-p}},3}(\mathbb{R}^{3})}
+\|u(s)\|^5_{M^{p,3}(\mathbb{R}^{3})}\right]ds \\
&\leq \left[K^5_1(t)+K^3_1(t)\cdot V(t)\right]\int_{0}^{t}(t-s)^{-\frac{5p-15}{2p}}s^{-\frac{3(p-3)}{2p}}s^{-\frac{p-3}{p}}ds \\
&\leq K^5(t)+K^3(t)\cdot V(t),
\end{align*}
\begin{align*}
\|\nabla (S\ast g)(t)\|_{M^{3,3}(\mathbb{R}^{3})}
&\leq\int_0^t\|\nabla S(t-s)g(s)\|_{M^{3,3}(\mathbb{R}^{3})}ds \\
&\leq\int_0^t(t-s)^{\frac{p-9}{2p}}\|g(s)\|_{M^{\frac{3p}{9-p}}(\mathbb{R}^{3})}ds \\
&\leq K_1(t)\cdot V(t)\int_0^t(t-s)^{-\frac{9-p}{2p}}s^{-\frac{p-3}{2p}}s^{-\frac{p-3}{p}}ds \\
&\leq K(t)\cdot V(t).
\end{align*}
Collecting all above arguments, we obtain \eqref{3.5}.
\end{proof}

\begin{lemma}\label{Lemma3.3}
Assume that $3<p<6$, then we have
\begin{align}
t^{\frac{p-3}{2p}}\| u(t)\|_{M^{p,3}(\mathbb{R}^{3})}\leq K_0(t)+\left[K(t)\left(1+K(t)^2\right)\left(K(t)^2+V(t)\right)\right],\label{3.13}
\end{align}
where $t\in[0,T]$.
\end{lemma}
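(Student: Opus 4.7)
The proof follows the same template as Lemma 3.2, with two modifications: the target output Morrey norm is changed from $M^{3,3}$ to $M^{p,3}$, and there is an additional prefactor $t^{(p-3)/(2p)}$ introduced on the left-hand side. Starting from the Duhamel representation (3.12), I take the $M^{p,3}(\mathbb{R}^{3})$ norm and split into five pieces: the homogeneous evolution $S(t)u(0)$, the three nonlinear convolutions $(S\ast F^{(i)})(t)$ for $i=1,2,3$, and the gradient convolution $(\nabla S\ast g)(t)$.

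Each piece is handled in two steps. First, I apply the smoothing bound from Lemma \ref{Lemma1.2}(2) with output index $\tilde{p}=p$ and input indices $r=3,\ p/3,\ p/2,\ 3p/(15-2p)$ for the initial-data and $F^{(i)}$ pieces respectively, together with an extra $(t-s)^{-1/2}$ from the spatial derivative in the $\nabla g$ term where $r=3p/(9-p)$. Second, I bound each source in the corresponding Morrey norm: Lemma \ref{Lemma3.1} covers $F^{(1)}, F^{(2)}, F^{(3)}$, while $g=v\cdot u$ is handled by H\"{o}lder's inequality in Morrey spaces (combined with Lemma \ref{Lemma2.1} for the $a$-factors hidden in $F^{(i)}$). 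Substituting the defining time weights of $K_{0}, K_{1}, K_{2}, V$ produces singular factors $s^{-\beta_{i}}$, and Euler's beta integral $\int_{0}^{t}(t-s)^{-\alpha_{i}}s^{-\beta_{i}}\,ds = c\,t^{1-\alpha_{i}-\beta_{i}}$ collapses each convolution to a single power of $t$.

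The heart of the argument is the scaling identity $1-\alpha_{i}-\beta_{i} = (3-p)/(2p)$, which must hold for every one of the four convolution pieces; only then does multiplication by the outer prefactor $t^{(p-3)/(2p)}$ produce a time-independent bound purely in $K_{0},K,V$. For example, in the $F^{(1)}$ contribution one has $\alpha = 3/p$ and $\beta = 3(p-3)/(2p)$, and the analogous verification goes through for $F^{(2)}$, $F^{(3)}$ and $\nabla g$. Collecting the resulting powers produces contributions of orders $K^{3}$, $K^{3}$, $K^{5}+K^{3}V$, and $KV$, which together are dominated by $K(1+K^{2})(K^{2}+V)$, yielding \eqref{3.13}.

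The main obstacle I anticipate is exactly this scaling bookkeeping: with five pieces to track, five different Morrey input indices, and a common target exponent to hit, a single arithmetic slip in computing $\alpha_{i}$ or $\beta_{i}$ destroys the cancellation with the outer weight. The restriction $3<p<6$ is precisely what is required to guarantee both that every intermediate Morrey index lies in the admissible range for Lemma \ref{Lemma1.2} (in particular that $\tilde p \in [r, r(n+1)]$ in each case) and that each kernel exponent $\alpha_{i}$ is strictly less than one so the beta integrals converge; verifying this admissibility piece by piece is the secondary technical check.
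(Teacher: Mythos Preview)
Your proposal is correct and follows essentially the same approach as the paper: the same five-piece Duhamel decomposition, the same input Morrey indices $3,\ p/3,\ p/2,\ 3p/(15-2p),\ 3p/(9-p)$, the same use of Lemma~\ref{Lemma1.2}(2), Lemma~\ref{Lemma3.1}, and Euler's beta integral, and the same collection of contributions into $K(1+K^{2})(K^{2}+V)$. Your explicit articulation of the scaling identity $1-\alpha_{i}-\beta_{i}=(3-p)/(2p)$ and the admissibility checks under $3<p<6$ matches exactly what the paper does implicitly in its term-by-term computation.
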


\begin{proof}
We establish the estimate of solution \eqref{3.12} in the space $M^{p,3}(\mathbb{R}^{3})$ at this time, that is,
\begin{align*}
\|u(x,t)\|_{M^{p,3}(R^n)}&\leq\|S(t)u(0)\|_{M^{p,3}(\mathbb{R}^{3})}+\sum_{i=1}^3\|(S\ast F^{(i)})(t)\|_{M^{p,3}(\mathbb{R}^{3})}\\
&\quad+\|(\nabla S\ast g)(t)\|_{M^{p,3}(\mathbb{R}^{3})}.
\end{align*}
By applying Lemma \ref{Lemma1.2} and Lemma \ref{Lemma3.1}, one has
\begin{align*}
\|S(t)u(0)\|_{M^{p,3}(\mathbb{R}^{3})}&\leq t^{-\frac{p-3}{2p}}\|u(0)\|_{M^{3,3}(\mathbb{R}^{3})}= t^{-\frac{p-3}{2p}} K_0(t),
\end{align*}
\begin{align*}
\|(S\ast F^{(1)})(t)\|_{M^{p,3}(\mathbb{R}^{3})}&\leq\int_{0}^{t}(t-s)^{-\frac{3}{p}}\|F^{(1)}(s)\|_{M^{\frac{p}{3},3}(\mathbb{R}^{3})}ds\\
&\leq\int_{0}^{t}(t-s)^{-\frac{3}{p}}\|u(s)\|^3_{M^{p,3}(\mathbb{R}^{3})}ds \\
&\leq K_1^3(t)\int_{0}^{t}(t-s)^{-\frac{3}{p}}s^{-\frac{3(p-3)}{2p}}ds \\
&\leq t^{-\frac{p-3}{2p}} K^3(t),
\end{align*}
\begin{align*}
\|(S\ast F^{(2)})(t)\|_{M^{p,3}(\mathbb{R}^{3})}
&\leq\int_{0}^{t}(t-s)^{-\frac{3}{2p}}\|F^{(2)}(s)\|_{M^{\frac{p}{2},3}(\mathbb{R}^{3})}ds \\
&\leq\int_{0}^{t}(t-s)^{-\frac{3}{2p}}\left[\|u(s)\|^2_{M^{p,3}(\mathbb{R}^{3})}\|\nabla u(s)\|_{M^{3,3}(\mathbb{R}^{3})}\right]ds \\
&\leq K_1(t)\cdot K_2(t)\int_{0}^{t}(t-s)^{-\frac{3}{2p}}s^{-\frac{1}{2}}s^{-\frac{2(p-3)}{2p}}ds  \\
&\leq t^{-\frac{p-3}{2p}}K^2(t),
\end{align*}
\begin{align*}
\|(S\ast F^{(3)})(t)\|_{M^{p,3}(\mathbb{R}^{3})}&\leq\int_{0}^{t}(t-s)^{-\frac{6-p}{p}}
\|F^{(3)}(s)\|_{M^{\frac{3p}{15-2p},3}(\mathbb{R}^{3})}ds \\
&\leq\int_{0}^{t}(t-s)^{-\frac{6-p}{p}}\left[\|u(s)\|^3_{M^{p,3}(\mathbb{R}^{3})}\|v(s)\|_{M^{{\frac{3p}{6-p}},3}(\mathbb{R}^{3})}
+\|u(s)\|^5_{M^{p,3}(\mathbb{R}^{3})}\right]ds \\
&\leq [K^5_1(t)+K^3_1(t)\cdot V(t)]\int_{0}^{t}(t-s)^{-\frac{6-p}{p}}s^{-\frac{5(p-3)}{2p}}ds \\
&\leq t^{-\frac{p-3}{2p}}[K^5(t)+K^3(t)\cdot V(t)],
\end{align*}
\begin{align*}
\|\nabla (S\ast g)(t)\|_{M^{p,3}(\mathbb{R}^{3})}&\leq\int_0^t(t-s)^{-\frac{3}{p}}\|g(s)\|_{M^{\frac{3p}{9-p}}(\mathbb{R}^{3})}ds\\
&\leq K_1(t)\cdot V(t)\int_0^t(t-s)^{-\frac{3}{p}}s^{-\frac{p-3}{2p}}s^{-\frac{p-3}{p}}ds\\
&\leq t^{-\frac{p-3}{2p}}K(t)\cdot V(t).
\end{align*}
Then for any $t\in[0,T]$, we complete the proof of \eqref{3.13}.
\end{proof}

\begin{lemma}\label{Lemma3.4}
Assume that $3<p<6$, then we have
\begin{align}
t^{\frac{1}{2}}\|\nabla u(t)\|_{M^{3,3}(\mathbb{R}^{3})}\leq K_0(t)+\left[K(t)\left(1+K(t)^2\right)\left(K(t)^2+V(t)\right)\right],\label{3.19}
\end{align}
where $t\in[0,T]$.
\end{lemma}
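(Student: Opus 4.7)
The plan is to mirror the proofs of Lemma \ref{Lemma3.2} and Lemma \ref{Lemma3.3}, but now applying one extra gradient to the Duhamel representation \eqref{3.12}. Writing
\[
\nabla u(t) = \nabla S(t) u(0) + \bigl(\nabla S \ast f\bigr)(t) + \bigl(\nabla^2 S \ast g\bigr)(t),
\]
I will estimate the three pieces separately in the $M^{3,3}(\mathbb{R}^{3})$-norm. The linear piece is immediate from Lemma \ref{Lemma1.2}(1): $\|\nabla S(t)u(0)\|_{M^{3,3}(\mathbb{R}^{3})} \leq C t^{-1/2} K_0(t)$.

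For the nonlinear piece $f = F^{(1)} + F^{(2)} + F^{(3)}$, I would use the gradient estimate from Lemma \ref{Lemma1.2}(2) together with the bounds of Lemma \ref{Lemma3.1}, exactly as in the $M^{p,3}$-argument of Lemma \ref{Lemma3.3}, but with target space $M^{3,3}$. Concretely, for $F^{(1)} \in M^{p/3,3}$ the computation $\tfrac{1}{2}+\tfrac{3}{2}(\tfrac{3}{p}-\tfrac{1}{3})=\tfrac{9}{2p}$ gives
\[
\|\nabla (S\ast F^{(1)})(t)\|_{M^{3,3}(\mathbb{R}^{3})} \leq K_1^3(t)\int_0^t (t-s)^{-9/(2p)} s^{-3(p-3)/(2p)} ds \lesssim t^{-1/2} K^3(t),
\]
by Euler's integral. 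Analogous computations for $F^{(2)}$ and $F^{(3)}$, with the same source/target indices used in the proofs of Lemmas \ref{Lemma3.2}--\ref{Lemma3.3} but now weighted by an extra $(t-s)^{-1/2}$ from the gradient, should produce bounds of the form $t^{-1/2} K^3(t)$ and $t^{-1/2}\bigl(K^5(t) + K^3(t) V(t)\bigr)$, respectively, after one more application of Euler's integral.

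For the $(\nabla S \ast g)(t)$ term, taking one more gradient gives $\nabla^2 S \ast g$, and this is the delicate piece since Lemma \ref{Lemma1.2} only provides first-order semigroup gradient bounds. I would dispose of this via the semigroup splitting
\[
\nabla^2 S(t-s) = \nabla S\bigl(\tfrac{t-s}{2}\bigr) \circ \nabla S\bigl(\tfrac{t-s}{2}\bigr),
\]
applying Lemma \ref{Lemma1.2}(2) twice: one step moves $M^{3p/(9-p),3}$ up to $M^{3,3}$, and the second is a pure $M^{3,3} \to M^{3,3}$ gradient bound. Combined with the Hölder bound $\|g(s)\|_{M^{3p/(9-p),3}} \leq \|v(s)\|_{M^{3p/(6-p),3}} \|u(s)\|_{M^{p,3}}$ (from $g = v\cdot u$) and Euler's integral, this yields
\[
\|\nabla(\nabla S\ast g)(t)\|_{M^{3,3}(\mathbb{R}^{3})} \lesssim t^{-1/2} K(t) V(t).
\]

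Summing all contributions and multiplying through by $t^{1/2}$ yields \eqref{3.19}. The main obstacle I expect is this last step: Lemma \ref{Lemma1.2} does not directly provide a second-order derivative semigroup estimate, so the semigroup splitting (or an equivalent analytic device) is essential for producing the $(t-s)^{-9/(2p)}$ decay needed to balance the singularities of $u$ and $v$ at $s=0$. The range constraint $3<p<6$ is also relevant in keeping all source-to-target index pairs within the admissible window $\tilde{p}\in[p,4p]$ of Lemma \ref{Lemma1.2}(2); one should verify at each application that the indices lie in this range, which is straightforward but tedious.
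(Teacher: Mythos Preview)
Your proposal is correct and follows essentially the same line as the paper: Duhamel representation, Lemma~\ref{Lemma1.2} combined with Lemma~\ref{Lemma3.1} on each of the five pieces, and Euler's beta integral to close. The only difference is cosmetic---the paper applies a second-derivative semigroup bound $\|\nabla^{2}S(t-s)g\|_{M^{3,3}}\lesssim (t-s)^{-9/(2p)}\|g\|_{M^{3p/(9-p),3}}$ directly as an implicit extension of Lemma~\ref{Lemma1.2}, whereas your semigroup splitting $\nabla^{2}S(t-s)=\nabla S\bigl(\tfrac{t-s}{2}\bigr)\circ\nabla S\bigl(\tfrac{t-s}{2}\bigr)$ reproduces exactly the same exponent.
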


\begin{proof}
By \eqref{3.12} and the definition of the space $M^{3,3}(\mathbb{R}^{3})$, we have
\begin{align*}
\|\nabla u(x,t)\|_{M^{3,3}(\mathbb{R}^{3})}&\leq\|\nabla S(t)u(0)\|_{M^{3,3}(\mathbb{R}^{3})}
+\sum_{i=1}^3\|\nabla(S\ast F^{(i)})(t)\|_{M^{3,3}(\mathbb{R}^{3})} \\
&+\|\nabla^2(S\ast g)(t)\|_{M^{3,3}(\mathbb{R}^{3})}.
\end{align*}
Then, we use the same method as Lemma \ref{Lemma3.2} and Lemma \ref{Lemma3.3} to get
\begin{align*}
\|\nabla S(t)u(0)\|_{M^{3,3}(\mathbb{R}^{3})}&\leq t^{-\frac{1}{2}}\|u(0)\|_{M^{3,3}(\mathbb{R}^{3})}= t^{-\frac{1}{2}} K_0(t),
\end{align*}
\begin{align*}
\|\nabla (S\ast F^{(1)})(t)\|_{M^{3,3}(\mathbb{R}^{3})}&\leq\int_{0}^{t}(t-s)^{-\frac{9}{2p}}\|F^{(1)}(s)\|_{M^{\frac{p}{3},3}(R^n)}ds \\
&\leq\int_{0}^{t}(t-s)^{-\frac{9}{2p}}\|u(s)\|^3_{M^{p,3}(\mathbb{R}^{3})}ds \\
&\leq K_1^3(t)\int_{0}^{t}(t-s)^{-\frac{9}{2p}}s^{-\frac{3p-9}{2p}}ds \\
&\leq t^{-\frac{1}{2}} K^3(t),
\end{align*}
\begin{align*}
\|\nabla (S\ast F^{(2)})(t)\|_{M^{3,3}(\mathbb{R}^{3})}
&\leq\int_{0}^{t}(t-s)^{-\frac{3}{p}}\|F^{(2)}(s)\|_{M^{\frac{p}{2},3}(\mathbb{R}^{3})}ds \\
&\leq\int_{0}^{t}(t-s)^{-\frac{3}{p}}\left[\|u(s)\|^2_{M^{p,3}(\mathbb{R}^{3})}\|\nabla u(s)\|_{M^{3,3}(\mathbb{R}^{3})}\right]ds \\
&\leq K_1(t)\cdot K_2(t)\int_{0}^{t}(t-s)^{-\frac{3}{p}}s^{-\frac{1}{2}}s^{-\frac{2(p-3)}{2p}}ds  \\
&\leq t^{-\frac{1}{2}}K^2(t),
\end{align*}
\begin{align*}
&\|\nabla (S\ast F^{(3)})(t)\|_{M^{3,3}(\mathbb{R}^{3})}\\
&\leq\int_{0}^{t}(t-s)^{-\frac{15-2p}{2p}}\|F^{(3)}(s)\|_{M^{\frac{3p}{15-2p},3}(\mathbb{R}^{3})}ds \\
&\leq\int_{0}^{t}(t-s)^{-\frac{15-2p}{2p}}
\left[\|u(s)\|^3_{M^{3,3}(\mathbb{R}^{3})}\|v(s)\|_{M^{{\frac{3p}{6-p}},3}(\mathbb{R}^{3})}+\|u(s)\|^5_{M^{p,3}(\mathbb{R}^{3})}\right]ds \\
&\leq \left[K^5_1(t)+K^3_1(t)\cdot V(t)\right]\int_{0}^{t}(t-s)^{-\frac{15-2p}{2p}}s^{-\frac{5(p-3)}{2p}}ds \\
&\leq t^{-{\frac{1}{2}}}[K^5(t)+K^3(t)\cdot V(t)],
\end{align*}
\begin{align*}
\|\nabla^2 (S\ast g)(t)\|_{M^{3,3}(\mathbb{R}^{3})}
&\leq\int_0^t(t-s)^{-\frac{9}{2p}}\|g(s)\|_{M^{\frac{3p}{9-p}}(\mathbb{R}^{3})}ds \\
&\leq K_1(t)\cdot V(t)\int_0^t(t-s)^{-\frac{9}{2p}}s^{-\frac{9}{2p}}s^{-\frac{p-3}{p}}ds \\
&\leq t^{-\frac{1}{2}}K(t)\cdot V(t).
\end{align*}
Combining the above estimates, for any $t\in[0,T]$, we get \eqref{3.19}.
\end{proof}

\begin{lemma}\label{Lemma3.5}
Assume that $3<p<6$, for any $t\in(0,T)$, we have
\begin{align*}
V(t)\leq c\max\left\{~\sup_{\tau\in(0,t)}\|v(\tau)\|_{M^{3,3}(\mathbb{R}^{3})},
~\sup_{\tau\in(0,t)}\tau^{\frac{1}{2}}\|v(\tau)\|_{L^{\infty}(\mathbb{R}^{3})}~\right\},
\end{align*}
where $c>0$ is a constant independently of $t$.
\end{lemma}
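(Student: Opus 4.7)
The plan is to reduce the statement to a single interpolation inequality for Morrey spaces and then choose weights so that the powers of $\tau$ match up exactly. Since $3<p<6$, the exponent $\tilde p:=\tfrac{3p}{6-p}$ satisfies $\tilde p>3$, so $v(\tau)\in M^{3,3}(\mathbb{R}^{3})\cap L^{\infty}(\mathbb{R}^{3})$ embeds into $M^{\tilde p,3}(\mathbb{R}^{3})$ via a $L^\infty$-vs-Morrey interpolation. Concretely, I would bound the Morrey integrand directly: for any ball $B_{r}(x)\subset\mathbb{R}^{3}$,
\begin{align*}
r^{3-3}\int_{B_{r}(x)}|v(\tau,y)|^{\tilde p}\,dy
\leq \|v(\tau)\|_{L^{\infty}}^{\tilde p-3}\,r^{3-3}\int_{B_{r}(x)}|v(\tau,y)|^{3}\,dy
\leq \|v(\tau)\|_{L^{\infty}}^{\tilde p-3}\|v(\tau)\|_{M^{3,3}}^{3},
\end{align*}
so taking the $\tilde p$-th root yields
\begin{align*}
\|v(\tau)\|_{M^{\tilde p,3}(\mathbb{R}^{3})}
\leq \|v(\tau)\|_{M^{3,3}(\mathbb{R}^{3})}^{\,3/\tilde p}\,\|v(\tau)\|_{L^{\infty}(\mathbb{R}^{3})}^{\,1-3/\tilde p}
=\|v(\tau)\|_{M^{3,3}(\mathbb{R}^{3})}^{\frac{6-p}{p}}\,\|v(\tau)\|_{L^{\infty}(\mathbb{R}^{3})}^{\frac{2p-6}{p}}.
\end{align*}

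Next I would distribute the weight $\tau^{(p-3)/p}$ across the two factors on the right. The key algebraic identity is $\tfrac{1}{2}\cdot\tfrac{2p-6}{p}=\tfrac{p-3}{p}$, which lets me write
\begin{align*}
\tau^{\frac{p-3}{p}}\|v(\tau)\|_{M^{\tilde p,3}(\mathbb{R}^{3})}
\leq \bigl(\|v(\tau)\|_{M^{3,3}(\mathbb{R}^{3})}\bigr)^{\frac{6-p}{p}}
\bigl(\tau^{1/2}\|v(\tau)\|_{L^{\infty}(\mathbb{R}^{3})}\bigr)^{\frac{2p-6}{p}}.
\end{align*}
Because $\tfrac{6-p}{p},\tfrac{2p-6}{p}\in(0,1)$ and $\tfrac{6-p}{p}+\tfrac{2p-6}{p}=1$ (this is where the hypothesis $3<p<6$ is used), the weighted geometric mean is majorized by the maximum of the two bases, giving
\begin{align*}
\tau^{\frac{p-3}{p}}\|v(\tau)\|_{M^{\tilde p,3}(\mathbb{R}^{3})}
\leq \max\bigl\{\|v(\tau)\|_{M^{3,3}(\mathbb{R}^{3})},\ \tau^{1/2}\|v(\tau)\|_{L^{\infty}(\mathbb{R}^{3})}\bigr\}.
\end{align*}
Taking the supremum over $\tau\in(0,t)$ produces the claimed bound on $V(t)$.

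In short, the only real step is a one-line $L^{\infty}$-trivial interpolation between $M^{3,3}$ and $L^{\infty}$ inside Morrey norms, followed by an arithmetic check that the weights combine correctly. I do not anticipate a genuine obstacle; the delicate point is simply to verify the exponent bookkeeping, i.e.\ that the Morrey interpolation exponent $3/\tilde p=(6-p)/p$ pairs with the $L^{\infty}$ exponent $(2p-6)/p$ in such a way that $\tau^{(p-3)/p}$ splits as $1\cdot(\tau^{1/2})^{(2p-6)/p}$. The restriction $3<p<6$ is exactly what keeps both exponents in $(0,1)$ so that the convex-combination-to-maximum step is valid.
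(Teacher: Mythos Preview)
Your proof is correct and follows essentially the same route as the paper: interpolate $M^{3p/(6-p),3}$ between $M^{3,3}$ and $L^{\infty}$, then check that the time weight $\tau^{(p-3)/p}$ falls entirely on the $L^{\infty}$ factor as $(\tau^{1/2})^{(2p-6)/p}$. The only cosmetic difference is that the paper passes from the geometric mean to a sum via Young's inequality and then to the maximum, whereas you go directly from the geometric mean to the maximum; your route is in fact slightly cleaner.
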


\begin{proof}
By interpolation inequalities, one has
\begin{align*}
\|v(t)\|_{M^{\frac{3p}{6-p},3}(\mathbb{R}^{3})}\leq \|v(t)\|^{\frac{6-p}{p}}_{M^{3,3}(\mathbb{R}^{3})}
 \|v(t)\|^{\frac{2p-6}{p}}_{L^{\infty}(\mathbb{R}^{3})}.
\end{align*}
Obviously, for any $t\in(0,T)$, we have
\begin{align*}
t^{\frac{p-3}{p}}\|v(t)\|_{M^{\frac{3p}{6-p},3}(\mathbb{R}^{3})}
\leq \|v(t)\|^{\frac{6-p}{p}}_{M^{3,3}(\mathbb{R}^{3})}\left[t^{\frac{1}{2}}\|v(t)\|_{L^{\infty}(\mathbb{R}^{3})}\right]^{\frac{2p-6}{p}}.
\end{align*}
For the arbitrariness of $t$, it implies that $V(t)$ satisfies the above inequality, and then applying Young's inequality, one has
\begin{align*}
V(t)\leq \|v(t)\|_{M^{3,3}(\mathbb{R}^{3})}+\;t^{\frac{1}{2}}\|v(t)\|_{L^{\infty}(\mathbb{R}^{3})}.
\end{align*}
Hence, it holds that
\begin{align*}
V(t)\leq c \max\left\{~\sup_{\tau\in(0,t)}\|v(\tau)\|_{M^{3,3}(\mathbb{R}^{3})},\;
\sup_{\tau\in(0,t)}\tau^{\frac{1}{2}}\|v(\tau)\|_{L^{\infty}(\mathbb{R}^{3})}~\right\}.
\end{align*}
We can see that $c>0$ is a constant independently of $t$.
\end{proof}

\begin{proposition}\label{pro3.1}
Assume that $3<p<5$, we have
\begin{align}
K(t)\leq K_0(t)+\left[K(t)\left(1+K(t)^2\right)\left(K(t)^2+V(t)\right)\right].\label{3.25}
\end{align}
Moreover, suppose that there exists a sufficiently small constant $\rho>0$, if
\begin{align*}
\sup_{t\in(0,T)}(K_0(t)+V(t))<\rho,
\end{align*}
then for any $t\in[0,T]$, we can obtain
\begin{align}
K(t)\leq2K_0(t).\label{3.26}
\end{align}
\end{proposition}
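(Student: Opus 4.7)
The plan for establishing \eqref{3.25} is direct: Lemmas \ref{Lemma3.3} and \ref{Lemma3.4} already produce, for each fixed $t\in[0,T]$, the pointwise bounds
\[
t^{\frac{p-3}{2p}}\|u(t)\|_{M^{p,3}(\mathbb{R}^3)}\;\le\;K_0(t)+K(t)\bigl(1+K(t)^2\bigr)\bigl(K(t)^2+V(t)\bigr)
\]
and the analogous one for $t^{1/2}\|\nabla u(t)\|_{M^{3,3}(\mathbb{R}^3)}$. Because $K(\tau)$ and $V(\tau)$ are nondecreasing in $\tau$ and $K_0$ does not depend on time, the right-hand side is nondecreasing. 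Hence I can first replace $t$ on the left by an arbitrary $\tau\le t$, then take the supremum over $\tau\in(0,t)$ on each of the two displays, obtaining $K_1(t)$ and $K_2(t)$ bounded by the same RHS. Taking the max yields \eqref{3.25}.

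For \eqref{3.26} my approach is a standard continuity (bootstrap) argument. First I would note that $K(t)$ is continuous and nondecreasing in $t$ with $K(0)=0$; this uses $u\in C^0([0,T];M^{2,3,\infty})$, so that $t\mapsto t^{(p-3)/(2p)}\|u(t)\|_{M^{p,3}}$ and $t\mapsto t^{1/2}\|\nabla u(t)\|_{M^{3,3}}$ are continuous on $(0,T]$ and vanish as $t\to 0^+$. Define
\[
T^{\ast}:=\sup\bigl\{\,t\in[0,T]\;:\;K(s)\le 2K_0(s)\text{ for all }s\in[0,t]\,\bigr\}.
\]
Since $K(0)=0\le 2K_0$, we have $T^{\ast}>0$ by continuity. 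Suppose, for contradiction, that $T^{\ast}<T$; then by continuity $K(T^{\ast})=2K_0(T^{\ast})$. Substituting this equality and the bounds $K(T^{\ast})\le 2K_0\le 2\rho$, $V(T^{\ast})\le\rho$ into \eqref{3.25} gives
\[
2K_0(T^{\ast})\;\le\;K_0(T^{\ast})+2K_0(T^{\ast})\bigl(1+4\rho^{2}\bigr)\bigl(4\rho^{2}+\rho\bigr),
\]
i.e.\ $1\le 2(1+4\rho^2)(4\rho^2+\rho)$. Choosing $\rho$ small enough from the outset forces the right-hand side to be strictly less than $1$, which is a contradiction; therefore $T^{\ast}=T$ and \eqref{3.26} holds on all of $[0,T]$.

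The main obstacle I anticipate is the continuity at $t=0$ needed to initialize the bootstrap: the weights $t^{(p-3)/(2p)}$ and $t^{1/2}$ vanish at $t=0$, so $K_1(t),K_2(t)\to 0$ as $t\to 0^+$ provided $\|u(t)\|_{M^{p,3}}$ and $\|\nabla u(t)\|_{M^{3,3}}$ do not blow up faster than these weights. This is where the regularity class $u\in C^0([0,T];M^{2,3,\infty})$ and the semigroup smoothing estimates in Lemma \ref{Lemma1.2} applied to the Duhamel representation \eqref{3.12} must be invoked; once this short-time smallness is in hand, the rest of the argument is a clean algebraic absorption. A secondary subtlety is choosing the threshold $\rho$ universally in terms of $p$ and $\varepsilon$, independent of $t$ and of the particular solution, which is automatic because the constants appearing in Lemmas \ref{Lemma3.2}--\ref{Lemma3.4} are independent of $t$.
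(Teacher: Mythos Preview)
Your proposal is correct and follows essentially the same approach as the paper: derive \eqref{3.25} by combining Lemmas \ref{Lemma3.3} and \ref{Lemma3.4} and taking suprema, then obtain \eqref{3.26} by a continuity argument that plugs $K=2K_0$ back into \eqref{3.25} and forces a contradiction for small $\rho$. The paper organizes the contradiction slightly differently---first excluding any crossing point $K(t_0)=2K_0$, then separately ruling out $K(t)>2K_0$ for all $t$ via $\lim_{t\to 0}K(t)=0$---whereas you package this as a single bootstrap with the maximal time $T^{\ast}$; the ingredients (continuity of $K$, $K(0^+)=0$, and the algebraic absorption) are identical.
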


\begin{proof}
Recalling the definition of $K(t),~K_1(t),~K_2(t)$ and $V(t)$, and then combining Lemma \ref{Lemma3.3} with Lemma \ref{Lemma3.4}, we can obtain \eqref{3.25} directly. In the following, we argue by contradiction. Assuming that there exists $t_0\in(0,T)$ such that $K(t_0)=2K_0(t)\neq0$,
and then substituting $K(t_0)$ into \eqref{3.25}, one has
\begin{align*}
2K_0(t)\leq K_0(t)+2\left(V(t_0)+4K_0(t_0)^2\right)\left(1+4K_0(t_0)^2\right)K_0(t_0).
\end{align*}
We know that $0<K_0(t_0)+V(t_0)<\rho$ and $\rho$ is small enough, there always exists a constant $c>0$ yields that
\begin{align*}
\frac{1}{c}\leq2\left(V(t_0)+4K_0(t_0)^2\right)\left(1+4K_0(t_0)^2\right)<\frac{1}{c},
\end{align*}
which is a contradiction. Hence the continuous functions $t\mapsto K(t)$ and $t\mapsto 2K_0(t)$ do not intersect in $(0,T)$.

Now for any $t\in(0,T)$, we reassume that $K(t)>2K_0(t)$. Substituting it into \eqref{3.25}, we can get
\begin{align*}
1<\frac{1}{2}+\left(V(t)+K(t)^2\right)\left(1+K(t)^2\right),
\end{align*}
since $\lim\limits_{t \to 0} V(t)=0$ and $\lim\limits_{t \to 0} K(t)=0$, which is a contradiction. Then, we derive \eqref{3.26} from the above arguments.
\end{proof}

\begin{lemma}\label{lemma3.7}
Assume that $3<p<5,~\max\{1-\frac{9}{4p},1-\frac{15-2p}{p},1-\frac{6}{p},1-\frac{9-p}{p},\frac{3}{4}\}<\alpha<1$, then we have
\begin{align}
t^{\frac{1+\alpha}{2}}\|\nabla u(t)\|_{M^{\frac{3}{1-\alpha},3}(\mathbb{R}^{3})}
\leq K_0(t)+\left[K(t)\left(1+K(t)^2\right)\left(K(t)^2+V(t)\right)\right],\label{3.33}
\end{align}
where $t\in[0,T]$.
\end{lemma}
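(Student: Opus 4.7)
The plan is to repeat the semigroup--Duhamel template of Lemmas \ref{Lemma3.2}--\ref{Lemma3.4}, now targeting the Morrey space $M^{\tilde p,3}(\mathbb{R}^{3})$ with $\tilde p:=3/(1-\alpha)$. Because the hypothesis forces $\alpha>3/4$, one has $\tilde p>12$, which lies outside the single-step admissible range $[p_{0},4p_{0}]$ of Lemma \ref{Lemma1.2}(2) for every source exponent $p_{0}$ arising in the problem, so each semigroup factor will be resolved by splitting $S(t-s)=S((t-s)/k)^{k}$ for an appropriate $k\geq 2$. The good news is that iterating the single-step estimate this way yields exactly the same effective time exponent $-\tfrac{1}{2}-\tfrac{3}{2}(1/p_{0}-(1-\alpha)/3)$ as a formal one-shot estimate from $M^{p_{0},3}$ to $M^{\tilde p,3}$, so no power is lost in the iteration.

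Applying $\nabla$ to \eqref{3.12} produces five terms to bound in $M^{\tilde p,3}$. For the linear piece $\nabla S(t)u(0)$ with source $M^{3,3}$, iterated Lemma \ref{Lemma1.2} gives $t^{-1/2-(3/2)(1/3-(1-\alpha)/3)}\|u(0)\|_{M^{3,3}}=t^{-(1+\alpha)/2}K_{0}(t)$, contributing the $K_{0}(t)$ term on the right.

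For the three convolutions $\nabla(S*F^{(i)})$ I use Lemma \ref{Lemma3.1} (together with Lemma \ref{Lemma2.1} for the $a$ and $a_{0}^{(i)}$ factors inside $F^{(2)},F^{(3)}$) to bound each $F^{(i)}(s)$ in its natural Morrey norm, insert the time decays of $\|u(s)\|_{M^{p,3}}$, $\|\nabla u(s)\|_{M^{3,3}}$ and $\|v(s)\|_{M^{3p/(6-p),3}}$ encoded in $K_{1},K_{2},V$, and reduce each convolution to an Euler integral $\int_{0}^{t}(t-s)^{\gamma_{i}}s^{-\delta_{i}}ds$. Exactly as in Lemmas \ref{Lemma3.2}--\ref{Lemma3.4}, the identity $3/(2p_{0})+\delta_{i}=3/2$ makes the source dependence cancel after integration and delivers $c\,t^{-(1+\alpha)/2}$ times the respective monomial in $K$ and $V$. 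The lower bounds on $\alpha$ listed in the hypothesis are precisely the integrability conditions $\gamma_{i}>-1$ and $\delta_{i}<1$ for the five pieces $F^{(1)},F^{(2)},F^{(3)}$ and $\nabla g$; the numerology $1-9/(4p)$, $1-(15-2p)/p$, $1-6/p$, $1-(9-p)/p$ records the source exponent associated with each piece. The $\nabla^{2}(S*g)$ term is treated analogously with $g=v\cdot u$ placed in $M^{3p/(9-p),3}$ by H\"older, contributing the final $K(t)V(t)$ piece.

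The main obstacle will be the simultaneous bookkeeping of these five integrability constraints. For the $F^{(1)}$ piece in particular, the naive source $M^{p/3,3}$ from $F^{(1)}\simeq|u|^{3}$ yields a $(t-s)$-exponent integrable only when $\alpha<2-9/p$, which is generally incompatible with $\alpha>3/4$. The remedy is to interpolate $u(s)$ through a Gagliardo--Nirenberg type estimate using $\nabla u\in M^{3,3}$ and $u\in M^{p,3}$ into a higher Lebesgue exponent $M^{q,3}$ with $q$ chosen in a narrow window where both the $(t-s)$ and $s$ exponents are sub-critical; checking that this window is non-empty for every $\alpha\in(\max\{\cdots\},1)$, and that the corresponding $s$-weight from $K_{1},K_{2}$ matches the algebraic cancellation, is the sharp point of the argument. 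Once this is in place, summing the five pieces and multiplying by $t^{(1+\alpha)/2}$ produces \eqref{3.33}.
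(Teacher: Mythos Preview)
Your overall architecture is exactly the paper's: apply $\nabla$ to the Duhamel representation \eqref{3.12}, push each of the five pieces through Lemma~\ref{Lemma1.2}(2) with target $M^{3/(1-\alpha),3}$, feed in the pointwise bounds on $F^{(1)},F^{(2)},F^{(3)},g$ from Lemma~\ref{Lemma3.1}, and close each convolution with an Euler integral, just as in Lemmas~\ref{Lemma3.2}--\ref{Lemma3.4}.

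The difference is that the paper carries this out \emph{without} your two technical safeguards. It does not split $S(t-s)=S((t-s)/k)^{k}$ to stay inside the admissible window $\tilde p\le 4p_{0}$ of Lemma~\ref{Lemma1.2}(2); it simply writes down the one-step exponent $-\tfrac12-\tfrac32(1/p_{0}-(1-\alpha)/3)$ for each piece. Likewise, for $F^{(1)}$ the paper keeps the naive source $M^{p/3,3}$ and records the $(t-s)$-exponent directly, without the Gagliardo--Nirenberg interpolation you propose. In other words, the paper's proof is the ``formal one-shot'' computation that you correctly note yields the right time weight $t^{-(1+\alpha)/2}$; it does not engage with the range restriction or with the borderline integrability of the $F^{(1)}$ kernel. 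Your iteration device and interpolation step are therefore genuine additions: they make the argument rigorous under the stated hypothesis $\alpha>3/4$ (which indeed forces $3/(1-\alpha)>12$ and puts every source outside a single application of Lemma~\ref{Lemma1.2}(2)), at the cost of some extra bookkeeping. If you drop both refinements and simply mimic the computations of Lemma~\ref{Lemma3.4} with the new target exponent, you recover the paper's proof verbatim.
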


\begin{proof}
By \eqref{3.12} and the definition of $M^{\frac{3}{1-\alpha},3}(\mathbb{R}^{3})$, one has
\begin{align*}
\|\nabla u(x,t)\|_{M^{\frac{3}{1-\alpha},3}(\mathbb{R}^{3})}
&\leq\|\nabla S(t)u(0)\|_{M^{\frac{3}{1-\alpha},3}(\mathbb{R}^{3})}
+\sum_{i=1}^3\|\nabla(S\ast F^{(i)})(t)\|_{M^{\frac{3}{1-\alpha},3}(\mathbb{R}^{3})} \\
&\quad+\|\nabla^2(S\ast g)(t)\|_{M^{\frac{3}{1-\alpha},3}(\mathbb{R}^{3})}.
\end{align*}
Now we estimate the above five terms one by one. By Lemma \ref{Lemma1.2}, Lemma \ref{Lemma3.1}, Lemma \ref{Lemma3.6} and Bochner's inequality, we have
\begin{align*}
\|\nabla S(t)u(0)\|_{M^{\frac{3}{1-\alpha},3}(\mathbb{R}^{3})}&\leq t^{-\frac{1+\alpha}{2}}\|u(0)\|_{M^{3,3}(\mathbb{R}^{3})}
\leq t^{-\frac{1+\alpha}{2}} K_0(t),
\end{align*}
\begin{align*}
\|\nabla (S\ast F^{(1)})(t)\|_{M^{\frac{3}{1-\alpha},3}(\mathbb{R}^{3})}
&\leq\int_{0}^{t}(t-s)^{-\frac{9-\alpha p}{2p}}\|F^{(1)}(s)\|_{M^{\frac{p}{3},3}(\mathbb{R}^{3})}ds \\
&\leq\int_{0}^{t}(t-s)^{-\frac{9-\alpha p}{2p}}\|u(s)\|^3_{M^{p,3}(\mathbb{R}^{3})}ds \\
&\leq K_1^3(t)\int_{0}^{t}(t-s)^{-\frac{9-\alpha p}{2p}}s^{-\frac{3(p-3)}{2p}}ds \\
&\leq t^{-\frac{1+\alpha}{2}} K^3(t),
\end{align*}
\begin{align*}
\|\nabla (S\ast F^{(2)})(t)\|_{M^{\frac{3}{1-\alpha},3}(\mathbb{R}^{3})}
&\leq\int_{0}^{t}(t-s)^{-\frac{6+\alpha p}{2p}}\|F^{(2)}(s)\|_{M^{\frac{p}{2},3}(\mathbb{R}^{3})}ds \\
&\leq\int_{0}^{t}(t-s)^{-\frac{6+\alpha p}{2p}}\left[\|u(s)\|^2_{M^{p,3}(\mathbb{R}^{3})}
\|\nabla u(s)\|_{M^{3,3}(\mathbb{R}^{3})}\right]ds \\
&\leq K_1(t)\cdot K_2(t)\int_{0}^{t}(t-s)^{-\frac{6+\alpha p}{2p}}s^{-\frac{1}{2}}s^{-\frac{2(p-3)}{2p}}ds \\
&\leq t^{-\frac{1+\alpha}{2}}K^2(t),
\end{align*}
It suffices to show that
\begin{align*}
&\|\nabla (S\ast F^{(3)})(t)\|_{M^{\frac{3}{1-\alpha},3}(\mathbb{R}^{3})}\\
&\leq\int_{0}^{t}(t-s)^{\frac{2p-15-\alpha p}{2p}}\|F^{(3)}(s)\|_{M^{\frac{3p}{15-2p},3}(\mathbb{R}^{3})}ds \\
&\leq\int_{0}^{t}(t-s)^{\frac{2p-15-\alpha p}{2p}}
\left[\|u(s)\|^3_{M^{3,3}(\mathbb{R}^{3})}\|v(s)\|_{M^{{\frac{3p}{6-p}},3}(\mathbb{R}^{3})}
+\|u(s)\|^5_{M^{p,3}(\mathbb{R}^{3})}\right]ds \\
&\leq \left[K^5_1(t)+K^3_1(t)\cdot V(t)\right]\int_{0}^{t}(t-s)^{\frac{2p-15-\alpha p}{2p}}s^{-\frac{5(p-3)}{2p}}ds \\
&\leq t^{-\frac{1+\alpha}{2}}[K^5(t)+K^3(t)\cdot V(t)],
\end{align*}
\begin{align*}
\|\nabla^2 (S\ast g)(t)\|_{M^{\frac{3}{1-\alpha},3}(\mathbb{R}^{3})}
&\leq\int_0^t(t-s)^{-\frac{9+\alpha p}{2p}}\|g(s)\|_{M^{\frac{3p}{9-p}}(\mathbb{R}^{3})}ds \\
&\leq K_1(t)\cdot V(t)\int_0^t(t-s)^{-\frac{9+\alpha p}{2p}}s^{-\frac{9}{2p}}s^{-\frac{p-3}{p}}ds \\
&\leq t^{-\frac{1+\alpha}{2}}K(t)\cdot V(t).
\end{align*}
Then for any $t\in[0,T]$, we get \eqref{3.33}.
\end{proof}

\begin{lemma}\label{Lemma3.6}
Under the arguments of Lemma \ref{Lemma3.1}--Lemma \ref{lemma3.7}, we obtain
\begin{align}
t^{\frac{1}{2}}\|u(t)\|_{L^{\infty}(\mathbb{R}^{3})}\leq P(K_0(t),V(t),\alpha).\label{3.35}
\end{align}
where $0<\alpha<1$, $P(s)$ denotes the polynomial function related to $s$ .
\end{lemma}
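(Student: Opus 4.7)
The plan is to estimate $\|u(t)\|_{L^{\infty}}$ by interpolating the two Morrey bounds already secured: the $M^{p,3}$-bound on $u$ from Lemma \ref{Lemma3.3} and the $M^{3/(1-\alpha),3}$-bound on $\nabla u$ from Lemma \ref{lemma3.7}. Since $3/(1-\alpha)>3$ for every admissible $\alpha$, the Morrey--Campanato embedding puts $\nabla u$ into a space whose primitives are H\"{o}lder continuous of order $\alpha$, and combining this with an $M^{p,3}$ average control of $u$ will yield a pointwise bound.

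Concretely, I would first establish a Morrey--Gagliardo--Nirenberg interpolation of the form
$$\|u\|_{L^{\infty}(\mathbb{R}^{3})} \leq C\,\|u\|_{M^{p,3}(\mathbb{R}^{3})}^{1-\theta}\,\|\nabla u\|_{M^{3/(1-\alpha),3}(\mathbb{R}^{3})}^{\theta},\qquad \theta=\frac{3}{3+\alpha p}.$$
The argument is the standard one: for any $x\in\mathbb{R}^{3}$ and any ball $B_{r}(x)$, split
$$|u(x)| \leq |B_{r}|^{-1}\!\!\int_{B_{r}(x)}|u(y)|\,dy + \sup_{y\in B_{r}(x)}|u(x)-u(y)|,$$
control the first term by $C\,r^{-3/p}\|u\|_{M^{p,3}}$ via H\"{o}lder's inequality and the definition of the Morrey norm (which has full scaling $q=n=3$), control the second by $C\,r^{\alpha}\|\nabla u\|_{M^{3/(1-\alpha),3}}$ via Morrey--Campanato applied to $\nabla u$, and then optimize in $r$. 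The exponent $\theta=3/(3+\alpha p)$ is exactly the one forced by scaling.

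I then substitute the bounds from Lemmas \ref{Lemma3.3} and \ref{lemma3.7}. Writing $R(t):=K_{0}(t)+K(t)(1+K(t)^{2})(K(t)^{2}+V(t))$, we have
$$\|u(t)\|_{M^{p,3}(\mathbb{R}^{3})}\leq t^{-(p-3)/(2p)}R(t),\qquad \|\nabla u(t)\|_{M^{3/(1-\alpha),3}(\mathbb{R}^{3})}\leq t^{-(1+\alpha)/2}R(t),$$
so the interpolation produces
$$\|u(t)\|_{L^{\infty}(\mathbb{R}^{3})}\leq C\, t^{-[(1-\theta)(p-3)/(2p)+\theta(1+\alpha)/2]}R(t).$$
A short algebraic check is the miracle that makes the lemma true: with $\theta=3/(3+\alpha p)$, the exponent of $t$ is exactly $-1/2$, since
$$\frac{(1-\theta)(p-3)}{2p}+\frac{\theta(1+\alpha)}{2}=\frac{\alpha(p-3)+3(1+\alpha)}{2(3+\alpha p)}=\frac{\alpha p+3}{2(3+\alpha p)}=\frac{1}{2}.$$
Finally, invoking the smallness regime of Proposition \ref{pro3.1} to replace $K(t)$ by $2K_{0}(t)$ turns $R(t)$ into a polynomial $P$ in $K_{0}(t)$ and $V(t)$, delivering $t^{1/2}\|u(t)\|_{L^{\infty}}\leq P(K_{0}(t),V(t),\alpha)$.

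The main obstacle I anticipate is cleanly justifying the Morrey-type interpolation inequality rather than the algebra itself: one must verify that the full-scaling Morrey norm $M^{p,3}(\mathbb{R}^{3})$ truly controls the local averages with rate $r^{-3/p}$ in the form used above, and that the Morrey--Campanato oscillation bound extends to $\mathbb{R}^{3}$ with the precise constant needed. These are standard facts, but they are the step that has to be done carefully. A secondary subtlety is that the lower bound $\alpha>3/4$ in the hypothesis (together with the other thresholds in Lemma \ref{lemma3.7}) is tight: if $\alpha$ is any smaller, the Euler--beta integrals underpinning Lemma \ref{lemma3.7} diverge and the estimate fed into the interpolation simply does not exist.
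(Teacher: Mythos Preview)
Your argument is correct and follows the same overall strategy as the paper: interpolate an $L^{\infty}$ bound on $u$ between a Morrey norm of $u$ and the high-integrability Morrey norm of $\nabla u$ supplied by Lemma~\ref{lemma3.7}, then absorb the resulting powers of $t$ and invoke Proposition~\ref{pro3.1} to rewrite everything as a polynomial in $K_{0}(t)$ and $V(t)$.

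The one genuine difference is the choice of the second interpolation endpoint. The paper pairs $\|\nabla u\|_{M^{3/(1-\alpha),3}}$ with the \emph{unweighted} bound $\|u\|_{M^{3,3}}$ from Lemma~\ref{Lemma3.2}, using the inequality
\[
\|u(t)\|_{L^{\infty}(\mathbb{R}^{3})}\leq \|\nabla u(t)\|_{M^{3/(1-\alpha),3}(\mathbb{R}^{3})}^{1/(1+\alpha)}\,\|u(t)\|_{M^{3,3}(\mathbb{R}^{3})}^{\alpha/(1+\alpha)},
\]
so that only one factor carries a power of $t$ and the exponent $t^{-1/2}$ drops out immediately. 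You instead pair with the $M^{p,3}$ bound of Lemma~\ref{Lemma3.3}, which forces the extra scaling computation you carried out; your exponents $\theta=3/(3+\alpha p)$ reduce to the paper's $1/(1+\alpha)$ when $p=3$, so your version is a strict generalisation of the paper's interpolation. Both routes are sound; the paper's is marginally cleaner because it avoids tracking the $t^{-(p-3)/(2p)}$ weight, while yours has the advantage of not needing Lemma~\ref{Lemma3.2} at all.
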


\begin{proof}
Applying the interpolation inequality, we have
\begin{align*}
\|u(t)\|_{L^{\infty}(\mathbb{R}^3)}\leq\|\nabla u(x,t)\|^{\frac{1}{1+\alpha}}_{M^{\frac{3}{1-\alpha},3}(\mathbb{R}^{3})}
\|u(t)\|^{\frac{\alpha}{\alpha+1}}_{M^{3,3}(\mathbb{R}^{3})}.
\end{align*}
Since ${\sup\limits_{\tau\in(0,T)}}(K_0(t)+V(t))<\rho$, by Lemma \ref{pro3.1} and Lemma \ref{Lemma3.2}, one has
\begin{align*}
\|u(t)\|_{M^{3,3}(\mathbb{R}^{3})}\leq P(K_0(t),V(t),\alpha).
\end{align*}
Then, we can derive that
\begin{align*}
t^{\frac{1}{2}}\|u(t)\|_{{L^{\infty}}(\mathbb{R}^3)}\leq \left(t^{\frac{1+\alpha}{2}}
\|\nabla u(x,t)\|^{\frac{1}{1+\alpha}}_{M^{\frac{3}{1-\alpha},3}(\mathbb{R}^{3})}\right)^{\frac{1}{1+\alpha}} P(K_0(t),V(t),\alpha).
\end{align*}
Combining the above arguments with Lemma \ref{lemma3.7} and Proposition \ref{pro3.1}, we get the following desired result:
\begin{align*}
t^{\frac{1}{2}}\|u(t)\|_{L^{\infty}(\mathbb{R}^3)}\leq P(K_0(t),V(t),\alpha).
\end{align*}
We complete the proof of Lemma \ref{Lemma3.6}.
\end{proof}

\section{The regularity theory}\label{Sec6}
According to the previous analysis, we get the existence of local smooth solutions with high regularity of the initial data. At this time, it is easy to extend local solutions to global solutions by Proposition \ref{global-smooth-solu} and Lemma \ref{Lemma3.6}. In this section, we prove the existence of the global smooth solution with the initial data as main theorem, namely $\boldsymbol{m}_{0}-\boldsymbol{m}_{\infty}\in M^{2,3,1}(\mathbb{R}^{3})\cap M^{3,3,1}(\mathbb{R}^{3})$. It is obviously that the method of proving the existence of local smooth solutions is invalid. In subsection \ref{Sec6.1}, we reduce the regularity of the initial data and get a limitation of a sequence of global smooth solutions in terms of an approximating result. In subsection \ref{Sec6.2}, we improve the regularity of the limitation result. We shall use the following approximation result, which is originally given by Schoen and Uhlenbeck in \cite{Scho} and \cite[Proposition 7.2]{Stu}:
\begin{lemma}\label{lemma appro}
Assume that $\boldsymbol{m}:\mathbb{R}^{34}\rightarrow\mathbb{S}^{2}$ and $\boldsymbol{m}-\boldsymbol{m}_{\infty}\in M^{2,3,1}(\mathbb{R}^{3})\cap M^{3,3,1}(\mathbb{R}^{3})$, then there exists a sequence $\boldsymbol{m}^{k}\in M^{2,3,\infty}_{\ast}(\mathbb{R}^{3};\mathbb{S}^{2})$ satisfies
\begin{align*}
\boldsymbol{m}-\boldsymbol{m}^{k}\rightarrow 0~~\in M^{2,3,1}(\mathbb{R}^{3})\cap M^{3,3,1}(\mathbb{R}^{3}).
\end{align*}
Moreover, for any integer $\sigma>1$, if $\boldsymbol{m}\in M^{2,3,\sigma}_{\ast}(\mathbb{R}^{3};\mathbb{S}^{2})$, then $\{\boldsymbol{m}^{k}\}$ is uniformly bounded in  $M^{2,3,\sigma}_{\ast}(\mathbb{R}^{n};\mathbb{S}^{2})$.
\end{lemma}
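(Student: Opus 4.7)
The plan is to prove the approximation lemma by the classical mollification-then-projection strategy, adapted from Schoen--Uhlenbeck to the Morrey setting. Let $\eta \in C_c^\infty(\mathbb{R}^3)$ be a standard mollifier with $\int \eta = 1$, and set $\eta_\epsilon(x) = \epsilon^{-3}\eta(x/\epsilon)$. Define
\begin{align*}
\tilde{\boldsymbol{m}}_\epsilon := \boldsymbol{m}_\infty + \eta_\epsilon \ast (\boldsymbol{m} - \boldsymbol{m}_\infty).
\end{align*}
This is smooth and $\tilde{\boldsymbol{m}}_\epsilon - \boldsymbol{m}_\infty \in M^{2,3,\infty}(\mathbb{R}^3)$ for each $\epsilon > 0$, because convolution with a Schwartz function pushes $M^{2,3}$-data into $M^{2,3,j}$ for every $j$. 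However, $\tilde{\boldsymbol{m}}_\epsilon$ generally does not take values in $\mathbb{S}^2$, so I would project it back to the sphere.

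The second step is the key pointwise estimate $|\tilde{\boldsymbol{m}}_\epsilon(x)| \to 1$ uniformly. Using $|\boldsymbol{m}|=1$ a.e. and the symmetry of $\eta_\epsilon$, one writes
\begin{align*}
1 - |\tilde{\boldsymbol{m}}_\epsilon(x)|^2 = \tfrac{1}{2}\iint \eta_\epsilon(x-y)\eta_\epsilon(x-z)\,|\boldsymbol{m}(y)-\boldsymbol{m}(z)|^2\,dy\,dz.
\end{align*}
Since $\nabla\boldsymbol{m} \in M^{3,3}(\mathbb{R}^3)$ is borderline-critical in dimension three, a Morrey--Campanato (VMO) embedding yields $\boldsymbol{m} \in \mathrm{VMO}(\mathbb{R}^3;\mathbb{S}^2)$, so that the right-hand side tends to $0$ uniformly in $x$ as $\epsilon \to 0$. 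Thus for $\epsilon$ sufficiently small, $|\tilde{\boldsymbol{m}}_\epsilon(x)| \geq \tfrac{1}{2}$ on all of $\mathbb{R}^3$, and the nearest-point projection $\pi(y) := y/|y|$ is a smooth map of $\{|y|\ge 1/2\}$ onto $\mathbb{S}^2$ with derivatives of every order bounded there. For a sequence $\epsilon_k \downarrow 0$ set
\begin{align*}
\boldsymbol{m}^k(x) := \pi\bigl(\tilde{\boldsymbol{m}}_{\epsilon_k}(x)\bigr),
\end{align*}
which is smooth, $\mathbb{S}^2$-valued, and satisfies $\boldsymbol{m}^k - \boldsymbol{m}_\infty \in M^{2,3,\infty}_\ast(\mathbb{R}^3;\mathbb{S}^2)$.

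The third step is convergence. Applying the chain rule and the mean value form $\pi(y) - \pi(y') = \int_0^1 D\pi(\text{path})\cdot(y-y')\,ds$, together with uniform bounds on $D\pi$ on $\{|y|\ge 1/2\}$, I would reduce convergence of $\boldsymbol{m}^k$ in $M^{p,3,1}$ ($p=2,3$) to convergence of $\tilde{\boldsymbol{m}}_{\epsilon_k}$ in the same spaces, which is standard for Morrey mollification since $\nabla\tilde{\boldsymbol{m}}_\epsilon = \eta_\epsilon \ast \nabla\boldsymbol{m}$ and $\nabla\boldsymbol{m} \in M^{2,3}\cap M^{3,3}$. For the uniform bound when $\boldsymbol{m} \in M^{2,3,\sigma}_\ast$, one uses $\nabla^j \tilde{\boldsymbol{m}}_\epsilon = \eta_\epsilon \ast \nabla^j(\boldsymbol{m}-\boldsymbol{m}_\infty)$ for $j\le \sigma$ together with the Faà di Bruno formula for $\nabla^j \pi(\tilde{\boldsymbol{m}}_\epsilon)$, whose terms are uniformly controlled by $\|\nabla^i \boldsymbol{m}\|_{M^{2,3}}$, $i\le \sigma$, thanks to the $L^\infty$ bound on the derivatives of $\pi$.

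The main obstacle is the second step: producing the \emph{uniform} bound $|\tilde{\boldsymbol{m}}_\epsilon| \ge 1/2$ in the Morrey framework. Unlike classical $W^{1,p}$ with $p>n$, a priori $\boldsymbol{m}$ is not continuous, so the uniform closeness $|\tilde{\boldsymbol{m}}_\epsilon|\to 1$ is not automatic; it relies essentially on the $M^{3,3}$-regularity of $\nabla\boldsymbol{m}$, which gives VMO regularity of $\boldsymbol{m}$ in dimension three via a Poincaré--Morrey estimate $\fint_{B_r(x)}|\boldsymbol{m}-\boldsymbol{m}_{B_r(x)}|^2\,dy \lesssim r^{2-3/3}\|\nabla\boldsymbol{m}\|^2_{M^{3,3}}\cdot\text{(vanishing factor)}$. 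Verifying this embedding and using it to bound the double convolution integral uniformly is the delicate part; once it is established, the rest of the argument is a routine combination of mollifier estimates in Morrey spaces and the smooth chain rule for $\pi$.
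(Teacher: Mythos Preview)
Your proposal is correct and is precisely the Schoen--Uhlenbeck mollify-then-project argument that the paper invokes: the paper does not give its own proof of this lemma but simply cites \cite{Scho} and \cite[Proposition 7.2]{Stu}, and your sketch reproduces that method adapted to the present setting (noting that here $M^{p,3}(\mathbb{R}^3)=L^p(\mathbb{R}^3)$, so the VMO step from $\nabla\boldsymbol{m}\in M^{3,3}=L^3$ is the classical $W^{1,n}\hookrightarrow\mathrm{VMO}$ embedding). Your identification of the uniform lower bound $|\tilde{\boldsymbol{m}}_\epsilon|\ge 1/2$ as the crux is exactly right, and the rest (chain rule, Fa\`a di Bruno for the higher-order uniform bounds) is routine as you say.
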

\subsection{Global weak solution}\label{Sec6.1}
For any given initial data $\boldsymbol{m}_{0}:\mathbb{R}^{3}\rightarrow\mathbb{S}^{2}$ and $\boldsymbol{m}_{0}-\boldsymbol{m}_{\infty}\in M^{2,3,1}\cap M^{3,3,1}(\mathbb{R}^{3})$, applying Lemma \ref{lemma appro}, we see that there exists a sequence $\boldsymbol{m}_{0}^{k}\in M^{2,3,\infty}_{\ast}(\mathbb{R}^{n};\mathbb{S}^{2})$ satisfies
\begin{align*}
\|\boldsymbol{m}_{0}-\boldsymbol{m}_{0}^{k}\|_{M^{2,3,1}\cap M^{3,3,1}(\mathbb{R}^{3})}\rightarrow 0\qquad (k\rightarrow\infty).
\end{align*}
At this time, the corresponding solutions are local and smooth solution by Proposition \ref{global-smooth-solu}, i.e., $\boldsymbol{m}^{k}\in C^{0}([0, T);M^{2,3,\infty}_{\ast}(\mathbb{R}^{3};\mathbb{S}^{2}))$.

Next, we extend local solutions $\boldsymbol{m}^{k}\in C^{0}([0, T);M^{2,3,\infty}_{\ast}(\mathbb{R}^{3};\mathbb{S}^{2}))$ to global solutions. The key point is to prove that $\|\nabla \boldsymbol{m}^{k}\|_{L^{\infty}(\mathbb{R}^{3})}$ is bounded. Since $|\nabla\boldsymbol{m}^{k}|=|u^{k}(t)|$, it is equivalent to prove that $\|u^{k}(t)\|_{L^{\infty}(\mathbb{R}^{3})}\leq \infty$. Suppose that $u^{k}(t)$ is the smooth solution of \eqref{2.10}, by Lemma \ref{Lemma3.2} and Lemma \ref{Lemma3.6}, one has
\begin{align*}
\|u^{k}(t)\|_{M^{3,3}(\mathbb{R}^{3})}+t^{\frac{1}{2}}\|u^{k}(t)\|_{{L^{\infty}}(\mathbb{R}^3)} \leq P(K^{k}_{0}(t),V^{k}(t),\alpha),
\end{align*}
where $K^{k}_{0}(t),V^{k}(t)$ represent the variables related to $u^{k}$ and $v_{k}$. By Proposition \ref{pro3.1} if $\sup_{t\in(0,T)}(K^{k}_{0}(t)+V^{k}(t))<\rho$, for any $t\in[0,T]$, then $K^{k}(t)\leq2K^{k}_{0}(t)$. Therefore, we have
\begin{align*}
P(K^{k}_{0}(t),V^{k}(t),\alpha) \leq  K^{k}_{0}(t)\leq \rho,
\end{align*}
which yields that
\begin{align*}
\|u^{k}(t)\|_{M^{3,3}(\mathbb{R}^{3})}+t^{\frac{1}{2}}\|u^{k}(t)\|_{{L^{\infty}}(\mathbb{R}^3)} \leq K^{k}_{0}(t)+V^{k}(t).
\end{align*}
Now if $\boldsymbol{m}^{k}\in M^{2,3,\infty}(\mathbb{R}^{3})$ are local smooth solutions of \eqref{1.1}, we notice that $|\nabla \boldsymbol{m}^{k}|=|u^{k}(t,x)|$ by moving frames, and then we have
\begin{align*}
\sup_{t>0}\|\nabla \boldsymbol{m}^{k}(t)\|_{M^{3,3}(\mathbb{R}^{3})}+\sqrt{t}\sup_{t>0}\|\nabla \boldsymbol{m}^{k}\|_{{L^{\infty}}(\mathbb{R}^3)} \leq \|\nabla \boldsymbol{m}^{k}_{0}\|_{M^{3,3}(\mathbb{R}^{3})}.
\end{align*}
Hence for any $t>0$, we can extend the local solution to the global solution without loss of regularity by Proposition \ref{global-smooth-solu}, namely $\boldsymbol{m}^{k}\in C^{0}((0, \infty);M^{2,3,\infty}_{\ast}(\mathbb{R}^{3};\mathbb{S}^{2}))$.

Moreover, a natural idea is to consider whether the limitation of $\boldsymbol{m}^{k}$ (which is still denoted by $\boldsymbol{m}$ for simplicity) is also a global smooth solution. It seems very challenging for verifying directly $\boldsymbol{m}$ is smooth and global, with the fact that it is equivalent to show
$\lim_{k\rightarrow\infty}\|\boldsymbol{m}^{k}-\boldsymbol{m}\|_{M^{2,3,\sigma}_{\ast}(\mathbb{R}^{3})}\rightarrow0, (\sigma=1,2,...)$. Therefore, we firstly prove that $\boldsymbol{m}$ is a global weak solution, and then improve the regularity of the weak solution to gain a global smooth solution.

The inequality \eqref{3.2} implies that
\begin{align*}
\|\boldsymbol{m}_{1}^{k}-\boldsymbol{m}_{2}^{k}\|_{M^{2,3}(\mathbb{R}^{3})}^{2}+\int_{0}^{t}
\|\nabla\boldsymbol{m}_{1}^{k}-\nabla\boldsymbol{m}_{2}^{k}\|_{M^{2,3}(\mathbb{R}^{3})}^{2}ds
\leq \|\boldsymbol{m}_{1}^{k}(0)-\boldsymbol{m}_{2}^{k}(0)\|_{M^{2,3}(\mathbb{R}^{3})}^{2},
\end{align*}
by the fact that $\|\boldsymbol{m}_{0}-\boldsymbol{m}_{0}^{k}\|_{M^{2,3,1}\cap M^{3,3,1}(\mathbb{R}^{3})}\rightarrow0$ and $\{\boldsymbol{m}^{k}_{0}\}_{k=1}^{\infty}$ is the Cauchy sequence. Then, it holds that $\|\boldsymbol{m}_{1}^{k}(0)-\boldsymbol{m}_{2}^{k}(0)\|_{M^{2,3}(\mathbb{R}^{3})}^{2}\rightarrow0$. Obviously it easy to obtain that $\boldsymbol{m}^{k}$ converges to $\boldsymbol{m}$ in $M^{2,3,1}(\mathbb{R}^{3})$ . By the definition of weak solutions, choosing a test function $\phi\in C^{\infty}_{0}(\mathbb{R}^{3})$ and multiplying \eqref{main-eq} by $\phi$, we get
{\small
\begin{align*}
\langle \frac{\partial\boldsymbol{m}^{k}}{\partial t},\phi\rangle +\langle(v\cdot\nabla)\boldsymbol{m}^{k}
+\boldsymbol{m}^{k}\times(v\cdot\nabla)\boldsymbol{m}^{k},\phi\rangle=\langle\boldsymbol{m}^{k}\times\nabla\boldsymbol{m}^{k},\nabla\phi\rangle
+\varepsilon\langle\Delta\boldsymbol{m}^{k}+|\nabla\boldsymbol{m}^{k}|^{2}\boldsymbol{m}^{k},\phi\rangle.
\end{align*}}
Since $\boldsymbol{m}^{k}\rightarrow \boldsymbol{m}$ strongly in $M^{2,3,1}(\mathbb{R}^{3})$, we have
\begin{align*}
\langle \frac{\partial\boldsymbol{m}}{\partial t},\phi\rangle +\langle(v\cdot\nabla)\boldsymbol{m},\phi\rangle
+\langle \boldsymbol{m}\times(v\cdot\nabla)\boldsymbol{m},\phi\rangle=\langle\boldsymbol{m}\times\nabla\boldsymbol{m},\nabla\phi\rangle
+\varepsilon\langle\Delta\boldsymbol{m}+|\nabla\boldsymbol{m}|^{2}\boldsymbol{m},\phi\rangle.
\end{align*}
By the definition of weak solutions, $\boldsymbol{m}$ is the global weak solution with the initial data $\boldsymbol{m}_{0}-\boldsymbol{m}_{\infty}\in M^{2,3,1}\cap M^{3,3,1}(\mathbb{R}^{3})$.

\subsection{Global smooth solution}\label{Sec6.2}
In this subsection, we want to use the regularity theory to prove the regularity of $\boldsymbol{m}$. The main key in showing global regularity of weak solutions is to prove that the boundedness of $\boldsymbol{m}$ in a H\"{o}lder space which is isomorphic to a functional space. Notice that the feature of the functional space is similar to the BMO space, which implies that we will take into account of an estimate of $\boldsymbol{m}$ in the BMO space. Before one can reach that conclusion, we need a vital inequality that is the estimate of the $M^{2,3}$-norm of $\partial_{t}\boldsymbol{m}$ can be controlled by the $M^{2,3}$-norm of $\nabla\boldsymbol{m}$ in appropriate cylinders.
The other important point is that the following monotonicity property: for any $z=(x,t)\in \mathbb{R}^{3}\times \mathbb{R}^{1}_{+}$, the function
\begin{align*}
F_{z}(r)=:E(r,z):=\frac{1}{r^{n}}\int_{P_{r}(z)}|\nabla\boldsymbol{m}|^{2}dxdt.
\end{align*}
is nondecreasing, which implies a corresponding small energy condition about $E(z,r)$. Firstly, we give some useful lemmas and corresponding conclusions mentioned above.
\begin{lemma}\label{Im inequality}
For any $\alpha\in(0,1)$, $z\in\mathbb{R}^{3}\times\mathbb{R}^{1}_{+}, r>0$, there exists a constant $C_{\alpha}=C(\alpha)$ such that
\begin{align*}
\int_{p_{\alpha r(z)}}\left|\frac{\partial \boldsymbol{m}}{\partial t}\right|^{2}dxdt
\lesssim \frac{C_\alpha}{r^{2}}\int_{p_{r(z)}}|\nabla \boldsymbol{m}|^{2}dxdt
\end{align*}
\end{lemma}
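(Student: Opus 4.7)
The target inequality is a parabolic Caccioppoli bound, trading one time derivative for one spatial derivative at the expected cost of a factor $r^{-2}$ as one shrinks from $P_{r}(z)$ to $P_{\alpha r}(z)$. My plan is to test the reformulated equation derived at the start of Section \ref{Sec3},
\begin{equation*}
\partial_{t}\boldsymbol{m}+\boldsymbol{m}\times\partial_{t}\boldsymbol{m}+(\boldsymbol{v}\cdot\nabla)\boldsymbol{m}+\boldsymbol{m}\times(\boldsymbol{v}\cdot\nabla)\boldsymbol{m}=\varepsilon\Delta\boldsymbol{m}+\varepsilon|\nabla\boldsymbol{m}|^{2}\boldsymbol{m},
\end{equation*}
against $\eta^{2}\partial_{t}\boldsymbol{m}$, where $\eta\in C_{c}^{\infty}(P_{r}(z))$ is a standard parabolic cutoff with $\eta\equiv 1$ on $P_{\alpha r}(z)$, $|\nabla\eta|\le C_{\alpha}/r$, and $|\partial_{t}\eta|\le C_{\alpha}/r^{2}$. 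The pointwise constraint $|\boldsymbol{m}|\equiv 1$ yields $\boldsymbol{m}\cdot\partial_{t}\boldsymbol{m}=0$, which immediately kills the gyroscopic term $\boldsymbol{m}\times\partial_{t}\boldsymbol{m}$ and the Lagrange term $|\nabla\boldsymbol{m}|^{2}\boldsymbol{m}$ after pairing.

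Integrating the Laplacian term by parts in space produces
\begin{equation*}
\varepsilon\!\int\eta^{2}\Delta\boldsymbol{m}\cdot\partial_{t}\boldsymbol{m}\,dx=-\frac{\varepsilon}{2}\frac{d}{dt}\!\int\eta^{2}|\nabla\boldsymbol{m}|^{2}dx+\varepsilon\!\int\eta\,\partial_{t}\eta\,|\nabla\boldsymbol{m}|^{2}dx-2\varepsilon\!\int\eta(\nabla\eta\cdot\nabla\boldsymbol{m})\cdot\partial_{t}\boldsymbol{m}\,dx,
\end{equation*}
and integrating in time over the parabolic interval kills the exact time derivative because $\eta$ vanishes on the parabolic boundary of $P_{r}(z)$. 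The remaining cross term and the two $(\boldsymbol{v}\cdot\nabla)\boldsymbol{m}$-contributions on the left are controlled by Cauchy--Schwarz and Young's inequality with a small parameter $\delta$, so that $\tfrac{1}{2}\int\eta^{2}|\partial_{t}\boldsymbol{m}|^{2}$ is absorbed to the left, leaving
\begin{equation*}
\int_{P_{\alpha r}(z)}|\partial_{t}\boldsymbol{m}|^{2}\,dxdt\lesssim\bigl(\varepsilon\|\nabla\eta\|_{\infty}^{2}+\|\partial_{t}\eta\|_{\infty}+\|\boldsymbol{v}\|_{\infty}^{2}\bigr)\!\int_{P_{r}(z)}|\nabla\boldsymbol{m}|^{2}\,dxdt.
\end{equation*}

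The first two multipliers already deliver the desired $C_{\alpha}/r^{2}$ factor, so the real obstacle is the $\|\boldsymbol{v}\|_{\infty}^{2}$ coefficient, which, unlike in the LLG setting, has no built-in $r^{-2}$ scaling; this is precisely the \emph{invariance failure} flagged in the introduction. I plan to dispose of it via the scaling hypothesis $\sqrt{t}\,\|\boldsymbol{v}(t)\|_{L^{\infty}}<\rho$ from Theorem \ref{main-th}: on a cylinder sitting at time $t\gtrsim r^{2}$, which is the relevant regime once the monotonicity formula $F_{z}(r)$ is in play in Section \ref{Sec6}, one has $\|\boldsymbol{v}\|_{\infty}^{2}\lesssim \rho^{2}/r^{2}$ and the bound closes. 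Should a purely local argument be required, I would instead extend $\eta\,\partial_{t}\boldsymbol{m}$ by zero to all of $\mathbb{R}^{3}$, convert the $L^{2}$-norm by Plancherel, and interpolate the Fourier multiplier $|\xi|^{2}$ between $|\xi|$ and $|\xi|^{3}$ as hinted in the introduction, using the compact spatial support to rebalance the powers of $r$. The main difficulty throughout is this final $r$-balancing; the rest is the classical parabolic Caccioppoli calculation adapted to the extra STT terms.
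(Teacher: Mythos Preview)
Your proposal is correct in outline and in fact cleaner in one respect than the paper's own argument. The paper tests against $\partial_{t}\boldsymbol{m}\,\phi^{2}$ with a purely \emph{spatial} cutoff $\phi\in C_{c}^{\infty}(B_{r})$, so the term $-\tfrac{1}{2}\partial_{t}\!\int\phi^{2}|\nabla\boldsymbol{m}|^{2}$ does not vanish after time integration; the resulting boundary contribution $\int_{B_{r}}|\nabla\boldsymbol{m}|^{2}(t)\,dx$ is then controlled by an external local-energy lemma \cite[Lemma 2.2]{DingWang}. Your parabolic cutoff $\eta$ avoids this detour entirely. The substantive difference is in the $\boldsymbol{v}$-term. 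The paper does not use $\sqrt{t}\,\|\boldsymbol{v}\|_{L^{\infty}}<\rho$ here; instead it places the $\|\boldsymbol{v}\|_{L^{\infty}}^{2}$ factor with the absorbable $\delta\|\partial_{t}\boldsymbol{m}\phi\|^{2}$ (choosing $\delta$ small depending on $\|\boldsymbol{v}\|_{L^{\infty}}$), leaving a bare $\|\nabla\boldsymbol{m}\,\phi\|_{L^{2}}^{2}$, and then runs exactly your fallback plan: extend by zero, apply Plancherel and Young for convolutions, and use Gagliardo--Nirenberg on $\phi$ to manufacture the missing $r^{-2}$. So your ``alternative'' is the paper's actual route. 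Your primary route via the scaling $t\gtrsim r^{2}$ is simpler and perfectly adequate for the only place this lemma is used (Lemma~\ref{E(z,r)}, where $r<\sqrt{T_{0}}/2$ and $t\ge T_{0}$), but it does not prove the lemma as stated for arbitrary $z$ and $r$; if you want the full statement you should commit to the Fourier argument.
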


\begin{proof}
Considering $\boldsymbol{m}\times\eqref{main-eq}+\eqref{main-eq}$, and then omitting some constants for simplicity, we have
\begin{align}\label{main-eq2}
\begin{split}
{\partial_t}\boldsymbol{m}+\boldsymbol{m}\times{\partial_t}\boldsymbol{m}
+\boldsymbol{m}\times(\boldsymbol{v}\cdot\nabla)\boldsymbol{m}
=\Delta\boldsymbol{m}+|\nabla\boldsymbol{m}|^{2}\boldsymbol{m}.
\end{split}
\end{align}
Without loss of generality, for any $P_{r}(z)\subseteq P_{2}(0,4)$, we choose a test function $\phi(\xi)\in C^{\infty}_{0}(\mathbb{R}^{n})$ satisfying $\phi(\xi)=1$ in $B_{\alpha r}$ and $|\nabla\phi(\xi)|\leq \frac{c}{r}$. Multiplying  \eqref{main-eq2} by ${\partial_t}\boldsymbol{m}\phi^{2}$, with the fact that $|\boldsymbol{m}|=1$,  it holds that
\begin{align*}
\langle{\partial_t}\boldsymbol{m},{\partial_t}\boldsymbol{m}\phi^{2} \rangle
+\langle\boldsymbol{m}\times(\boldsymbol{v}\cdot\nabla)\boldsymbol{m},{\partial_t}\boldsymbol{m}\phi^{2}\rangle
=\langle\Delta\boldsymbol{m},{\partial_t}\boldsymbol{m}\phi^{2}\rangle.
\end{align*}
By using H\"{o}lder's inequality and Young's inequality, one has
\begin{align}\label{2r}
\begin{split}
&\int_{B_{r}}|{\partial_t}\boldsymbol{m}\phi|^{2} dx\\
&\leq\|\boldsymbol{v}\nabla\boldsymbol{m}{\partial_t}\boldsymbol{m}\phi^{2}\|_{M^{1,3}(B_{r})}
+C(\delta)\|\nabla\boldsymbol{m}\nabla\phi\|^{2}_{M^{2,3}(B_{r})}
+\delta\|{\partial_t}\boldsymbol{m}\phi\|^{2}_{M^{2,3}(B_{r})}
+\int_{B_{r}}{\partial_{t}(\nabla\boldsymbol{m})^{2}}\phi^{2}dx\\
&\lesssim\|\boldsymbol{v}\nabla\boldsymbol{m}{\partial_t}\boldsymbol{m}\phi^{2}\|_{M^{1,3}(B_{r})}
+C(\delta)\frac{1}{r^{2}}\|\nabla\boldsymbol{m}\|^{2}_{M^{2,3}(B_{r})}
+\int_{B_{r}}{\partial_{t}(\nabla\boldsymbol{m})^{2}}\phi^{2}dx.
\end{split}
\end{align}
Next, we estimate the first term on the right-hand side of \eqref{2r}. It is worthy that the definition of $\nabla\boldsymbol{m}\phi$ is in the whole space, so we can use the Fourier transform. By the Plancherel equality, Young's inequality, and then using the Gagliardo-Nirenberg-Sobolev inequality, one has
\begin{align}\label{1.3}
\begin{split}
&\|\boldsymbol{v}\nabla\boldsymbol{m}{\partial_t}\boldsymbol{m}\phi^{2}\|_{M^{1,3}(B_{r})}\\
&\leq \delta\|{\partial_t}\boldsymbol{m}\phi \boldsymbol{v}\|^{2}_{M^{2,3}(B_{r})}
+C(\delta)\|\nabla\boldsymbol{m}\phi\|^{2}_{M^{2,3}(B_{r})}\\
&\leq \delta\|{\partial_t}\boldsymbol{m}\phi\|^{2}_{M^{2,3}(B_{r})}\|\boldsymbol{v}\|^{2}_{L^{\infty}(B_{r})}
+C(\delta)\|(i\xi)\hat{\boldsymbol{m}}\ast\hat{\phi}\|^{2}_{M^{2,3}_{\xi}(\mathbb{R}^{n})}\\
&\leq\delta\|{\partial_t}\boldsymbol{m}\|^{2}_{M^{2,3}(B_{r})}\|\boldsymbol{v}\|^{2}_{L^{\infty}(\mathbb{R}^{n})}
+C(\delta)\|\nabla\boldsymbol{m}\|^{2}_{M^{2,3}(B_{r})}\|\hat{\phi}\|^{2}_{M^{1,3}(\mathbb{R}^{n})}\\
&\leq\delta\|{\partial_t}\boldsymbol{m}\|^{2}_{M^{2,3}(B_{r})}\|\boldsymbol{v}\|^{2}_{L^{\infty}(\mathbb{R}^{n})}
+C(\delta)\|\nabla\boldsymbol{m}\|^{2}_{M^{2,3}(B_{r})}\|\phi\|^{4}_{M^{2,3}(B_{r})}\\
&\leq\delta\|{\partial_t}\boldsymbol{m}\|^{2}_{M^{2,3}(B_{r})}\|\boldsymbol{v}\|^{2}_{L^{\infty}(\mathbb{R}^{n})}
+C(\delta)\|\nabla\boldsymbol{m}\|^{2}_{M^{2,3}(B_{r})}\|\nabla\phi\|^{\frac{8}{3}}_{M^{1,3}(B_{r})}
\|\phi\|^{\frac{4}{3}}_{M^{6,3}(B_{r})}\\
&\leq\delta\|{\partial_t}\boldsymbol{m}\|^{2}_{M^{2,3}(B_{r})}\|\boldsymbol{v}\|^{2}_{L^{\infty}(\mathbb{R}^{n})}
+C(\delta)\|\nabla\boldsymbol{m}\|^{2}_{M^{2,3}(B_{r})}\|\nabla\phi\|^{\frac{8}{3}}_{M^{1,3}(B_{2}(0))}
\|\phi\|^{\frac{4}{3}}_{M^{6,3}(B_{r})}\\
&\leq\delta\|{\partial_t}\boldsymbol{m}\|^{2}_{M^{2,3}(B_{r})}\|\boldsymbol{v}\|^{2}_{L^{\infty}(\mathbb{R}^{n})}
+C(\delta)\|\nabla\boldsymbol{m}\|^{2}_{M^{2,3}(B_{r})}\left|\frac{1}{r}\right|^{\frac{8}{3}}|B_{r}|^{\frac{4}{3}\cdot\frac{1}{6}}\\
&\lesssim\delta\|{\partial_t}\boldsymbol{m}\|^{2}_{M^{2,3}(B_{r})}\|\boldsymbol{v}\|^{2}_{L^{\infty}(\mathbb{R}^{n})}
+C(\delta)\frac{1}{r^{2}}\|\nabla\boldsymbol{m}\|^{2}_{M^{2,3}(B_{r})}.
\end{split}
\end{align}
Let $\delta'=\delta\|\boldsymbol{v}\|^{2}_{L^{\infty}(\mathbb{R}^{n})}$, we select a suitable $\delta$ such that $\delta'$ is sufficiently small to absorb the term $\|{\partial_t}\boldsymbol{m}\|^{2}_{M^{2,3}(B_{r})}$.

Observing that the third term on the right-hand side of \eqref{2r}, we can add an integral with respect to time on $(s,t)$ to get
\begin{align}\label{1.1}
\int_{s}^{t}\int_{B_{r}(z)}{\partial_{t}(\nabla\boldsymbol{m})^{2}}dxdt
\leq \int_{B_{r}(z)}(\nabla\boldsymbol{m})^{2}(t)dx-\int_{B_{r}(z)}(\nabla\boldsymbol{m})^{2}(s)dx.
\end{align}
Inspired by \cite[Lemma 2.2]{DingWang}, we can derive that
\begin{align}\label{1.2}
\int_{B_{r}(z)}(\nabla\boldsymbol{m})^{2}(t)dx
\lesssim \frac{1}{r^{2}}\int_{P_{r}(z)}(\nabla\boldsymbol{m})^{2}dxdt.
\end{align}
Then, substituting \eqref{1.3}--\eqref{1.2} into \eqref{2r}, we get
\begin{align*}
\int_{p_{\alpha r(z)}}\left|\frac{\partial \boldsymbol{m}}{\partial t}\right|^{2}dxdt
\lesssim \frac{C_{\alpha}}{r^{2}}\int_{p_{r(z)}}|\nabla \boldsymbol{m}|^{2}dxdt.
\end{align*}
\end{proof}

\begin{lemma}\label{lemma BMO es}
For any $\alpha\in(0,1)$, $z\in\mathbb{R}^{3}\times\mathbb{R}^{1}_{+}, r>0$, there exists a constant $C_{\alpha}$ such that
\begin{align*}
\int_{p_{\alpha r(z)}}|\boldsymbol{m}(x,t)-(\boldsymbol{m})_{z,\alpha r}|^{2}dxdt
\leq C_{\alpha}r^{2}\int_{p_{r(z)}}|\nabla \boldsymbol{m}|^{2}dxdt.
\end{align*}
\end{lemma}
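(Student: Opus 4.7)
The goal is a parabolic Poincaré--Wirtinger inequality, where the key obstacle is that the natural parabolic version of the Poincaré inequality requires control of $\partial_t\boldsymbol{m}$; this is precisely what Lemma \ref{Im inequality} supplies. My plan is therefore to split $\boldsymbol{m}(x,t)-(\boldsymbol{m})_{z,\alpha r}$ through the time-slice spatial average
\begin{align*}
(\boldsymbol{m})_{\alpha r}(t):=\frac{1}{|B_{\alpha r}(x_{0})|}\int_{B_{\alpha r}(x_{0})}\boldsymbol{m}(y,t)\,dy,\qquad z=(x_{0},t_{0}),
\end{align*}
and estimate the two pieces
\begin{align*}
\text{(I)}\;\boldsymbol{m}(x,t)-(\boldsymbol{m})_{\alpha r}(t),\qquad\text{(II)}\;(\boldsymbol{m})_{\alpha r}(t)-(\boldsymbol{m})_{z,\alpha r}
\end{align*}
separately via the triangle inequality $|a+b|^{2}\leq 2|a|^{2}+2|b|^{2}$.

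For (I), I would apply the classical (elliptic) Poincaré--Wirtinger inequality on the ball $B_{\alpha r}(x_{0})$ at each fixed $t$, yielding $\int_{B_{\alpha r}}|\boldsymbol{m}(x,t)-(\boldsymbol{m})_{\alpha r}(t)|^{2}dx\lesssim(\alpha r)^{2}\int_{B_{\alpha r}}|\nabla\boldsymbol{m}(x,t)|^{2}dx$, then integrate in $t$ over $(t_{0}-(\alpha r)^{2},t_{0})$ and use $P_{\alpha r}(z)\subseteq P_{r}(z)$ to obtain a bound by $C_{\alpha}r^{2}\int_{P_{r}(z)}|\nabla\boldsymbol{m}|^{2}dx\,dt$.

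For (II), I would rewrite $(\boldsymbol{m})_{z,\alpha r}$ as the time-average of $(\boldsymbol{m})_{\alpha r}(s)$ over $s\in(t_{0}-(\alpha r)^{2},t_{0})$ and express the difference as $(\boldsymbol{m})_{\alpha r}(t)-(\boldsymbol{m})_{\alpha r}(s)=\int_{s}^{t}\frac{1}{|B_{\alpha r}|}\int_{B_{\alpha r}}\partial_{\tau}\boldsymbol{m}(y,\tau)\,dy\,d\tau$. Two successive applications of Cauchy--Schwarz/Jensen produce
\begin{align*}
|(\boldsymbol{m})_{\alpha r}(t)-(\boldsymbol{m})_{\alpha r}(s)|^{2}\lesssim\frac{|t-s|}{|B_{\alpha r}|}\int_{P_{\alpha r}(z)}|\partial_{\tau}\boldsymbol{m}|^{2}dy\,d\tau.
\end{align*}
Invoking Lemma \ref{Im inequality} converts the right-hand side into $\frac{C_{\alpha}|t-s|}{r^{2}|B_{\alpha r}|}\int_{P_{r}(z)}|\nabla\boldsymbol{m}|^{2}$. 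Averaging in $s$ and integrating in $t$, the $|t-s|$ factor contributes $(\alpha r)^{2}$ and multiplying by $|B_{\alpha r}|$ (from the $x$-integration, since (II) is $x$-independent) gives precisely $C_{\alpha}r^{2}\int_{P_{r}(z)}|\nabla\boldsymbol{m}|^{2}dx\,dt$. Summing the two contributions closes the proof.

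The step that requires the most care is not conceptual but arithmetic: keeping track of the factors $(\alpha r)^{2}$ from parabolic scaling, the normalizing $|B_{\alpha r}|^{-1}$ in each spatial average, and the compensating $r^{-2}$ coming out of Lemma \ref{Im inequality}, so that everything balances to the stated $r^{2}$. All other ingredients---Poincaré on a ball, Jensen, Cauchy--Schwarz---are standard, and the use of the equation \eqref{main-eq} is already packaged inside Lemma \ref{Im inequality}.
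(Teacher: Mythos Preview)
Your proof is correct and follows essentially the same approach as the paper: both arguments amount to a parabolic Poincar\'e inequality bounding the left-hand side by $C_{\alpha}r^{2}\int_{P_{\alpha r}(z)}(|\nabla\boldsymbol{m}|^{2}+r^{2}|\partial_{t}\boldsymbol{m}|^{2})$ and then invoke Lemma~\ref{Im inequality} to absorb the $\partial_{t}\boldsymbol{m}$ term. The only cosmetic difference is that the paper rescales to the unit cylinder and quotes the parabolic Poincar\'e inequality as a black box, whereas you prove it directly via the time-slice decomposition (I)+(II); your version is therefore slightly more self-contained.
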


\begin{proof}
Let $w(x,t)=\boldsymbol{m}(x_{0}+rx,t_{0}+r^{2}t)-(\boldsymbol{m})_{z_{0},\alpha r}$, where $\boldsymbol{m}\in H^{1}_{loc}(\mathbb{R}^{3}\times\mathbb{R}^{1})$, $z_{0}=(x_{0},t_{0})\in\mathbb{R}^{n}\times\mathbb{R}^{1}_{+}, r>0$.
By Poincar\'{e}'s inequality, we have
\begin{align*}
\int_{p_{\alpha}(0)}|w-(w)_{0,\alpha}|^{2}dxdt \leq C_{\alpha} \int_{p_{\alpha}(0)}|\nabla w|^{2}+w_{t}^{2}dxdt.
\end{align*}
Applying the definition of $w(x,t)$ implies that $(w)_{0,\alpha}=0$, then one has
\begin{align*}
&\int_{p_{\alpha}(0)}|\boldsymbol{m}(x_{0}+rx,t_{0}+r^{2}t)-(\boldsymbol{m})_{z_{0},\alpha r}|^{2}dxdt\\
&\leq C_{\alpha} \int_{p_{\alpha}(0)}|\nabla \boldsymbol{m}(x_{0}+rx,t_{0}+r^{2}t)|^{2}+\boldsymbol{m}^{2}_{t}(x_{0}+rx,t_{0}+r^{2}t)dxdt.
\end{align*}
We use the coordinate transform to obtain that
\begin{align*}
\int_{p_{\alpha r}(z_{0})}|\boldsymbol{m}(x,t)-(\boldsymbol{m})_{z_{0},\alpha r}|^{2}dxdt
\leq C_{\alpha}r^{2}\int_{p_{\alpha r}(z_{0})}|\nabla \boldsymbol{m}(x,t)|^{2}+r^{2}\boldsymbol{m}^{2}_{t}(x,t)dxdt.
\end{align*}
Finally, by Lemma \ref{Im inequality}, one has
\begin{align*}
\int_{p_{\alpha r}(z_{0})}|\boldsymbol{m}(x,t)-(\boldsymbol{m})_{z_{0},\alpha r}|^{2}dxdt
\leq C_{\alpha}r^{2}\int_{p_{r}(z_{0})}|\nabla \boldsymbol{m}(x,t)|^{2}dxdt.
\end{align*}
\end{proof}

\begin{lemma}\label{E(z,r)}
Assume that $\boldsymbol{m}$ is a weak solution of \eqref{main-eq}, if there exist a constant $c_{1}>0$ and a sufficiently small constant $\varepsilon_{0}>0$ such that for any $T_{0}>0,~\tau\in (0,1),~z\in \mathbb{R}^{n}\times[T_{0},\infty),r<\frac{\sqrt{T_{0}}}{2}$, $E(z,r)<\varepsilon_{0}$, then we have $E(z,\tau r)\leq 2c_{1}\tau^{2}E(z,r)$.
\end{lemma}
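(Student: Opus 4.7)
The plan is to establish the decay by the harmonic-approximation scheme: compare $\boldsymbol{m}$ on $P_r(z)$ with a reference solution $h$ of a frozen-coefficient linear model (which enjoys polynomial energy decay automatically) and show that the correction $w=\boldsymbol{m}-h$ is controlled by a higher power of $E(z,r)$. First I would rescale by setting $\boldsymbol{m}_r(y,s):=\boldsymbol{m}(x_0+ry,\,t_0+r^2s)$ on $P_1(0)$; the hypothesis $r<\sqrt{T_0}/2$ with $z\in\mathbb{R}^n\times[T_0,\infty)$ keeps this rescaling away from $t=0$, transforms the claim into $\int_{P_\tau(0)}|\nabla \boldsymbol{m}_r|^2\,dy\,ds\le 2c_1\tau^{n+2}E(z,r)$, and makes the rescaled drift $\boldsymbol{v}_r(y,s):=r\,\boldsymbol{v}(x_0+ry,t_0+r^2s)$ small in $L^\infty(P_1)$ through the main-theorem hypothesis $\sup_{t>0}\sqrt{t}\|\boldsymbol{v}\|_{L^\infty}<\rho$.

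Next I would define the linear comparison on $P_{1/2}(0)$ by
\begin{align*}
\partial_t h-\varepsilon\Delta h+\boldsymbol{m}_\ast\times\Delta h=0,\qquad h\big|_{\partial_p P_{1/2}(0)}=\boldsymbol{m}_r,
\end{align*}
where $\boldsymbol{m}_\ast:=(\boldsymbol{m}_r)_{P_1(0)}$ and $\boldsymbol{m}_\ast\times(\cdot)$ denotes the constant skew-symmetric operator $\xi\mapsto\boldsymbol{m}_\ast\times\xi$. The generator $(\varepsilon-i\boldsymbol{m}_\ast\times)\Delta$ is of the same dissipative--Schr\"odinger type as $(\varepsilon-i)\Delta$ in Lemma \ref{Lemma1.2}, so its kernel admits the same Gaussian-type bounds and yields the Campanato decay
\begin{align*}
\int_{P_\tau(0)}|\nabla h|^2\,dy\,ds\ \le\ c_1\tau^{n+2}\!\int_{P_{1/2}(0)}|\nabla h|^2\,dy\,ds\ \le\ c_1\tau^{n+2}E(z,r)
\end{align*}
with $c_1$ depending only on $n$ and $\varepsilon$.

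The substantive work is on the correction $w:=\boldsymbol{m}_r-h$, which vanishes on the parabolic boundary and solves an inhomogeneous equation whose right-hand side collects the genuinely nonlinear quantities $(\boldsymbol{m}_r-\boldsymbol{m}_\ast)\times\Delta\boldsymbol{m}_r$, $\varepsilon|\nabla\boldsymbol{m}_r|^2\boldsymbol{m}_r$, $(\boldsymbol{v}_r\cdot\nabla)\boldsymbol{m}_r$, and $\boldsymbol{m}_r\times(\boldsymbol{v}_r\cdot\nabla)\boldsymbol{m}_r$. Testing with $w$, integrating by parts to move the Laplacian off $\boldsymbol{m}_r$, applying Lemma \ref{lemma BMO es} to convert the leading error into $\|\boldsymbol{m}_r-\boldsymbol{m}_\ast\|_{L^2(P_1)}^2\lesssim E(z,r)$, and using the parabolic Gagliardo--Nirenberg inequality together with the Caccioppoli-type bound of Lemma \ref{Im inequality}, each contribution is dominated by $CE(z,r)^{1+\delta}$ for some $\delta>0$; the drift pieces are absorbed by the $L^\infty$-smallness of $\boldsymbol{v}_r$. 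Hence $\int_{P_{1/2}(0)}|\nabla w|^2\le CE(z,r)^{1+\delta}$, and choosing $\varepsilon_0$ so small that $CE(z,r)^{\delta}\le c_1$ gives by the triangle inequality $E(z,\tau r)\le 2c_1\tau^2 E(z,r)$ as required.

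The principal obstacle I foresee is the dispersive term $\boldsymbol{m}\times\Delta\boldsymbol{m}$, which is of the \emph{same} order as the dissipative Laplacian and therefore must stay inside the linear comparison model; freezing its coefficient at $\boldsymbol{m}_\ast$ produces an error $(\boldsymbol{m}_r-\boldsymbol{m}_\ast)\times\Delta w$ whose absorption into the Campanato estimate is exactly what forces the smallness threshold $\varepsilon_0$. A secondary complication, specific to the LLS model, is that the drift $(\boldsymbol{v}\cdot\nabla)\boldsymbol{m}$ spoils the natural parabolic scaling invariance, so after rescaling one must rely on the $\sqrt{t}$-weighted hypothesis on $\boldsymbol{v}$ together with $r<\sqrt{T_0}/2$ to ensure that $\boldsymbol{v}_r$ remains uniformly small on $P_1(0)$.
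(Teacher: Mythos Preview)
Your plan follows the \emph{direct} harmonic--approximation route, whereas the paper proves this lemma by the \emph{indirect} blow--up/compactness method: one assumes a sequence $(z_k,r_k)$ with $E(z_k,r_k)=\lambda_k^2\to 0$ violating the conclusion, rescales $h_k(y,s)=\lambda_k^{-1}\bigl(\boldsymbol{m}(x_k+r_ky,t_k+r_k^2s)-a_k\bigr)$, and shows that a subsequence converges to a weak solution of the bare heat equation $\partial_t h-\Delta h=0$ on $P_{9/10}$, for which the Caccioppoli decay is automatic and yields the contradiction. The crucial step is upgrading $\nabla h_k\rightharpoonup\nabla h$ to strong $L^2$--convergence; this is exactly where the paper invokes H\'elein's antisymmetry identity for sphere--valued maps to rewrite the cubic term $|\nabla h_k|^2(\lambda_k h_k+a_k)$ in div--curl form (Lemma~\ref{strong-conver-th}).

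Your outline is a legitimate alternative strategy in principle, but it has a concrete gap at the sentence ``each contribution is dominated by $CE(z,r)^{1+\delta}$.'' After integrating by parts, the dispersive error $(\boldsymbol{m}_r-\boldsymbol{m}_\ast)\times\Delta\boldsymbol{m}_r$ tested against $w$ becomes $\int(\boldsymbol{m}_r-\boldsymbol{m}_\ast)\times\nabla\boldsymbol{m}_r\cdot\nabla w$, and the cubic term contributes $\int|\nabla\boldsymbol{m}_r|^2\,\boldsymbol{m}_r\cdot w$. Neither of these can be bounded by a power of $E$ strictly larger than one using only $\|\nabla\boldsymbol{m}_r\|_{L^2(P_1)}^2=E$ together with Lemma~\ref{lemma BMO es} and parabolic Gagliardo--Nirenberg: you would need $\|\boldsymbol{m}_r-\boldsymbol{m}_\ast\|_{L^\infty}\lesssim E^{\delta}$, which is precisely the H\"older continuity you are trying to establish, or else $\nabla\boldsymbol{m}_r\in L^p$ for some $p>2$. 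The paper's blow--up argument avoids this entirely because the rescaling inserts an explicit factor $\lambda_k\to 0$ in front of every nonlinear term, so they vanish in the weak limit without any extra integrability, and the H\'elein trick then closes the compactness. If you want to salvage the direct approach you must either import the scale--invariant bound $\sup_t\|\nabla\boldsymbol{m}(t)\|_{M^{3,3}}<\rho$ from the main--theorem hypotheses (which gives $\|(\boldsymbol{m}_r-\boldsymbol{m}_\ast)\nabla\boldsymbol{m}_r\|_{L^2}^2\lesssim\rho^2 E$, with smallness coming from $\rho$ rather than from $E$), or build in the same div--curl structure by hand; neither appears in your sketch.
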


\begin{proof}
We argue by contradiction. Suppose that there exists a sequence $\{(z_{k},r_{k})\}$, where $z_{k}\in \mathbb{R}^{n}\times [T_{0},\infty), 0<r_{k}<\frac{\sqrt{T_{0}}}{2}$ such that
\begin{align*}
E(z_{k},r_{k})=\lambda_{k}^{2}\rightarrow0,~as~k\rightarrow\infty.
\end{align*}
and
\begin{align*}
E(z_{k},\tau r_{k})>2c_{1}\tau^{2}\lambda_{k}^{2},\quad \tau\in(0,1).
\end{align*}
We define the functions $h_{k}(y,s)=\frac{\boldsymbol{m}(x_{k}+yr_{k},t_{k}+sr^{2}_{k})-a_{k}}{\lambda_{k}}$,
where $a_{k}={\int\hspace{-0.90em}-}_{P_{\frac{9}{10}r_{k}}(z_{k})} \boldsymbol{m} dz$. Employing Lemma \ref{Im inequality}, we have
\begin{align*}
\lambda_{k}^{2}\int_{P_{\frac{9}{10}}}|h_{k,t}|^{2}dz=\frac{1}{r_{k}^{n-2}}\int_{P_{\frac{9}{10}r_{k}(z_{k})}}|\boldsymbol{m}_{t}|^{2}dz
\leq \frac{1}{r_{k}^{n}}\int_{P_{\frac{9}{10}r_{k}(z_{k})}}|\nabla\boldsymbol{m}|^{2}dz
\leq E(z_{k},r_{k})=\lambda_{k}^{2}.
\end{align*}
Applying the coordinate transform and Lemma \ref{lemma BMO es}, we easily see that
\begin{align}\label{contradictory}
\frac{1}{\tau^{n}}\int_{P_{\tau}}|\nabla h_{k}|^{2}dz
=\frac{1}{\tau^{n}r^{n}_{k}\lambda_{k}^{2}}\int_{P_{\tau r_{k}}}|\nabla \boldsymbol{m}|^{2}dz
=\frac{1}{\lambda_{k}^{2}}E(z_{k},\tau r_{k})>2c_{1}\tau^{2},
\end{align}
\begin{align*}
\sup_{k}\int_{P_{\frac{9}{10}}}|h_{k,t}|^{2}dz\leq C,
\end{align*}
\begin{align*}
\int_{P_{1}(0)}|\nabla h_{k}|^{2}dz=\frac{1}{\lambda_{k}^{2}r_{k}^{n}}\int_{P_{r_{k}(z_{k})}}|\nabla\boldsymbol{m}|^{2}dz
=\frac{1}{\lambda_{k}^{2}}E(z_{k},r_{k})=1,
\end{align*}
\begin{align*}
\sup_{k}\int_{P_{\frac{9}{10}}}|h_{k}|^{2}dz=&\sup_{k}\frac{1}{\lambda_{k}^{2}r_{k}^{n+2}}\int_{P_{\frac{9}{10}r_{k}(z_{k})}}
|\boldsymbol{m}(x,t)-a_{k}|^{2}dz
\leq \frac{1}{\lambda_{k}^{2}r_{k}^{n}}\int_{P_{r_{k}(z_{k})}}|\nabla\boldsymbol{m}|^{2}dz
\leq C,
\end{align*}
where $C$ is a positive constant independently of $\lambda_{k}$ and $z_{k}$.
Hence, we can obtain the sequence $\{h_{k}\}_{k=1}^{\infty}$ is bounded in $M^{2,3,1}(P_{\frac{9}{10}};\mathbb{R}\times\mathbb{R}^{3})$, and then by the weak compactness which implies that there exists a subsequence $\{h_{k_{i}}\}_{i=1}^{\infty}$ (it is still written as $\{h_{k}\}$) such that
\begin{align}\label{weak conver}
\begin{split}
h_{k}\rightharpoonup h~in~M^{2,3}(P_{\frac{9}{10}};\mathbb{R}\times\mathbb{R}^{3}),\\
\nabla h_{k}\rightharpoonup  \nabla h~in~M^{2,3}(P_{1};\mathbb{R}\times\mathbb{R}^{3}),\\
h_{k,t}\rightharpoonup h_{t}~in~M^{2,3}(P_{\frac{9}{10}};\mathbb{R}\times\mathbb{R}^{3}).
\end{split}
\end{align}
Using the fact that $\{h_{k}\}\in M^{2,3,1}(P_{\frac{9}{10}};\mathbb{R}\times\mathbb{R}^{3})\subset\subset M^{2,3}(P_{\frac{9}{10}};\mathbb{R}\times\mathbb{R}^{3})$, we see that
\begin{align}\label{strong conver}
h_{k}\rightarrow h~in~M^{2,3}(P_{\frac{9}{10}};\mathbb{R}\times\mathbb{R}^{3}).
\end{align}
Next, we select a smooth function with compact support $w:P_{\frac{9}{10}}\rightarrow\mathbb{R}\times\mathbb{R}^{3}$ and define functions $w_{k}:P_{\frac{9}{10}r_{k}(z_{k})}\rightarrow\mathbb{R}\times\mathbb{R}^{3}$ satisfying
\begin{align*}
w_{k}(y,s)=w(\frac{y-x_{k}}{r_{k}},\frac{s-t_{k}}{r_{k}^{2}}).
\end{align*}
Consider
\begin{align*}
&\langle{\partial_s}\boldsymbol{m},w_{k}\rangle_{P_{r_{k}(z_{k})}}
+\langle\boldsymbol{m}\times{\partial_s}\boldsymbol{m},w_{k}\rangle_{P_{r_{k}(z_{k})}}
+\langle\boldsymbol{m}\times(\boldsymbol{v}\cdot\nabla)\boldsymbol{m},w_{k}\rangle_{P_{r_{k}(z_{k})}}\\
&=\langle\Delta\boldsymbol{m},w_{k}\rangle_{P_{r_{k}(z_{k})}}
+\langle|\nabla\boldsymbol{m}|^{2}\boldsymbol{m},w_{k}\rangle_{P_{r_{k}(z_{k})}}.
\end{align*}
Rescaling the domain of integration to the unit cylinder $x=\frac{y-x_{k}}{r_{k}},~t=\frac{s-t_{k}}{r_{k}^{2}}$, and then replacing $h_{k}$ with $\boldsymbol{m}$, it holds that
\begin{align}\label{hk-eq}
\begin{split}
&\int_{P_{\frac{9}{10}}}{\partial_t}h_{k}\cdot w dxdt
\quad+\int_{P_{\frac{9}{10}}}(\lambda_{k}h_{k}+a_{k})\times{\partial_t}h_{k}\cdot w dxdt\\
&+r_{k}\int_{P_{\frac{9}{10}}}(\lambda_{k}h_{k}+a_{k})\times[v(x_{k}+xr_{k},t_{k}+tr_{k}^{2})\cdot \nabla]h_{k}\cdot w dxdt\\
&=-\int_{P_{\frac{9}{10}}}\nabla h_{k}:\nabla w dxdt
+\lambda_{k}\int_{P_{\frac{9}{10}}}|\nabla h_{k}|^{2}(\lambda_{k}h_{k}+a_{k})\cdot w dxdt.
\end{split}
\end{align}
We know that $\lambda_{k}\rightarrow0$, $a_{k}, r_{k}$ are constant, $v,r_{k}$ is bounded function, and then combining \eqref{weak conver} with \eqref{strong conver}, one has
\begin{align}\label{h-eq}
\int_{P_{\frac{9}{10}}}{\partial_t}h\cdot w dxdt+\int_{P_{\frac{9}{10}}}\nabla h:\nabla w dxdt=0.
\end{align}
Then, we can deduce that $h(t,x)\in M^{2,3,1}(P_{\frac{9}{10}}, \mathbb{R}\times\mathbb{R}^{3})$ is the solution of ${\partial_{t}}h-\Delta h=0$ in the weak sense and therefore in classical sense, hence $h$ is smooth inside $P_{\frac{9}{10}}$.
By the Caccioppoli inequality, one has
\begin{align}\label{E-Dh-bdd}
\begin{split}
\frac{1}{\tau^{n}}\int_{P_{\tau}}|\nabla h|^{2}dxdt
&\leq \frac{1}{\tau^{n-2}}\int_{P_{\tau}}h^{2}dxdt
\leq \tau^{2}{\int\hspace{-1.05em}-}_{P_{\tau}} h^{2} dxdt \\
&=\tau^{2}{\int\hspace{-1.05em}-}_{P_{\frac{9}{10}}}h^{2}dxdt
\leq c_{1}\tau^{2},
\end{split}
\end{align}
and then using Lemma \ref{strong-conver-th}, we know that
\begin{align}\label{Dhk convergence}
\nabla h_{k}\rightarrow \nabla h \text{ strongly in }M^{2,3}(P_{\frac{1}{2}}).
\end{align}
By the convergence \eqref{E-Dh-bdd} and \eqref{Dhk convergence}, we have
\begin{align*}
\frac{1}{\tau^{n}}\int_{P_{\tau}}|\nabla h_{k}|^{2}dyds
\leq c_{1}\tau^{2}.
\end{align*}
Substituting the definition of $h_{k}$ into the above inequality, we get
\begin{align*}
E(z_{k},\tau r_{k})\leq c_{1}\tau^{2}\lambda_{k}^{2},\quad \tau\in(0,1),
\end{align*}
which is contradictory to \eqref{contradictory}. Therefore, the assumption is wrong! So we have $E(z,\tau r)\leq 2c_{1}\tau^{2}E(z,r)$.
\end{proof}

\begin{remark}
We notice that ${\int\hspace{-0.90em}-}_{P_{r}(z_{0})}|h_{k}-(h_{k})_{z_{0},r}|dxdt\leq C$ by Poincar\'{e}'s inequality, i.e., $h_{k}\in BMO(P_{r}(z_{0}))$. Applying the John-Nirenberg inequality, we have
\begin{align*}
\{h_{k}\}_{k=1}^{\infty}~~is~~ bounded~~in~~M^{p,3}(P_{\frac{9}{10}}),\;1<q<\infty.
\end{align*}
By the Kondrachov-Rellich theorem we know that $M^{2,3,1}\subset\subset M^{q,3}$. Combining this with the weak compactness, similar to \eqref{strong conver}, one has
\begin{align}\label{hk-L4-strong}
h_{k}\rightarrow h~in~M^{p,3}(P_{\frac{9}{10}};\mathbb{R}\times\mathbb{R}^{3}),\;1<q<6.
\end{align}
\end{remark}

\begin{lemma}\label{strong-conver-th}
The sequence $\{\nabla h_{k}\}_{k=1}^{\infty}$ is strongly convergent in $M^{2,3}(P_{\frac{1}{2}})$,
i.e.,
\begin{align*}
\{\nabla h_{k}\}_{k=1}^{\infty} \rightarrow \nabla h~in~M^{2,3}(P_{\frac{1}{2}});\mathbb{R}\times\mathbb{R}^{n}).
\end{align*}
\end{lemma}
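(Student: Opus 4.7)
The plan is to derive an energy identity for $\int|\nabla(h_k-h)|^2\eta^2$ by subtracting \eqref{h-eq} from \eqref{hk-eq} and testing against $w=(h_k-h)\eta^2$, where $\eta\in C_0^\infty(P_{9/10})$ is a parabolic cutoff with $\eta\equiv 1$ on $P_{1/2}$, $0\leq\eta\leq 1$, and $|\nabla\eta|+|\partial_t\eta|$ uniformly bounded; then to show that every remainder on the right-hand side vanishes as $k\to\infty$. After expanding $\nabla w=\eta^2\nabla(h_k-h)+2\eta(h_k-h)\nabla\eta$ and integrating by parts in time (boundary terms drop because $\eta$ has compact support in $P_{9/10}$), the identity takes the form
\begin{align*}
\int_{P_{9/10}}|\nabla(h_k-h)|^2\eta^2\,dxdt
&=\int(h_k-h)^2\eta\partial_t\eta\,dxdt - 2\int\nabla(h_k-h)\cdot(h_k-h)\eta\nabla\eta\,dxdt\\
&\quad - I_1^{(k)} - I_2^{(k)} + I_3^{(k)},
\end{align*}
where $I_1^{(k)}=\int(\lambda_kh_k+a_k)\times\partial_th_k\cdot(h_k-h)\eta^2$, $I_2^{(k)}=r_k\int(\lambda_kh_k+a_k)\times(v\cdot\nabla)h_k\cdot(h_k-h)\eta^2$, and $I_3^{(k)}=\lambda_k\int|\nabla h_k|^2(\lambda_kh_k+a_k)\cdot(h_k-h)\eta^2$ collect the three extra nonlinear terms carried by \eqref{hk-eq} but not by \eqref{h-eq}.

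\textbf{Routine terms.} The time term is $o(1)$ by the strong convergence $h_k\to h$ in $M^{2,3}(P_{9/10})$ from \eqref{strong conver}. The gradient cross term is controlled by Young's inequality, absorbing $\tfrac12\int|\nabla(h_k-h)|^2\eta^2$ into the left-hand side and leaving a factor $C\int|h_k-h|^2|\nabla\eta|^2\to 0$. For $I_1^{(k)}$ and $I_2^{(k)}$ I would apply H\"older's inequality: $\lambda_kh_k+a_k$ is bounded in $M^{q,3}(P_{9/10})$ for every $1<q<6$ (since $a_k$ is a bounded sequence of constants and $\{h_k\}$ is bounded in $M^{q,3}$ by the remark preceding the lemma); $\partial_th_k$ and $\nabla h_k$ are uniformly bounded in $M^{2,3}(P_{9/10})$ (by Lemma \ref{Im inequality} and by construction of $h_k$); and $h_k-h\to 0$ strongly in $M^{q,3}(P_{9/10})$ for every $1<q<6$ by \eqref{hk-L4-strong}. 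Pairing these factors appropriately, and using that $v$ stays bounded on the rescaled cylinder (guaranteed by $z_k\in\mathbb{R}^3\times[T_0,\infty)$ and $r_k<\sqrt{T_0}/2$, which keeps us away from $t=0$), yields $I_j^{(k)}\to 0$ for $j=1,2$.

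\textbf{Main obstacle and conclusion.} The delicate term is $I_3^{(k)}$, because $|\nabla h_k|^2$ is only uniformly bounded in $L^1(P_{9/10})$ while its natural pairing $(\lambda_kh_k+a_k)(h_k-h)\eta^2$ is not in $L^\infty$. The saving ingredient is the explicit prefactor $\lambda_k\to 0$, so it suffices to bound the remaining integral uniformly in $k$. My plan is to exploit the fact that the weak limit $h$ solves the heat equation \eqref{h-eq} on $P_1$ and is therefore smooth on $P_{9/10}$ with $\|\nabla h\|_{L^\infty(P_{9/10})}<\infty$; splitting $|\nabla h_k|^2\leq 2|\nabla(h_k-h)|^2+2|\nabla h|^2$, the $|\nabla h|^2$-piece is handled by $\lambda_k\|\nabla h\|_{L^\infty}^2\|(\lambda_kh_k+a_k)(h_k-h)\|_{L^1(P_{9/10})}\to 0$, while the $|\nabla(h_k-h)|^2$-piece is controlled by combining parabolic Sobolev embedding (so that $h_k-h\in L^{10/3}(P_{9/10})$ uniformly) with a Meyers-type higher integrability $\nabla h_k\in L^{2+\sigma}_{\mathrm{loc}}$, derived by a reverse H\"older argument from the Caccioppoli inequality associated with \eqref{hk-eq}; H\"older's inequality together with the factor $\lambda_k\to 0$ then yields $I_3^{(k)}\to 0$. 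Passing to the limit in the energy identity gives $\int_{P_{1/2}}|\nabla(h_k-h)|^2\,dxdt\leq\int|\nabla(h_k-h)|^2\eta^2\,dxdt\to 0$, which is the claimed strong convergence. The principal difficulty is not any of the lower-order remainders but precisely this quadratic-gradient term $I_3^{(k)}$, whose closure requires trading the smallness of $\lambda_k$ against the lack of an a priori $L^{\infty}$ bound on $\nabla h_k$.
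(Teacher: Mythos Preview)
Your overall architecture matches the paper exactly: subtract \eqref{h-eq} from \eqref{hk-eq}, test with $w=\zeta^2(h_k-h)$, and show each remainder is $o(1)$. Your treatment of the time term, the gradient cross term, and of $I_1^{(k)},I_2^{(k)}$ (the paper's $R_2,R_3$) is also the same in spirit, relying on the uniform $L^2$ bounds on $\partial_t h_k,\nabla h_k$ together with the strong convergence \eqref{hk-L4-strong}.

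The genuine divergence is in the quadratic-gradient term $I_3^{(k)}$. The paper does \emph{not} invoke any higher integrability. Instead it uses H\'elein's antisymmetrization: from $|a_k+\lambda_k h_k|=1$ one has $(a_k^j+\lambda_k h_k^j)\,h^j_{k,x_l}=0$, which lets one rewrite
\[
|\nabla h_k|^2(a_k^i+\lambda_k h_k^i)=h^j_{k,x_l}\,b^{i,j}_{k,l},\qquad
b^{i,j}_{k,l}=h^j_{k,x_l}(a_k^i+\lambda_k h_k^i)-h^i_{k,x_l}(a_k^j+\lambda_k h_k^j),
\]
and then estimate by pairing $\nabla h_k\in L^2$ with $\zeta b^{i,j}_{k,l}$ in $L^6$ via Sobolev, using only the uniformly bounded quantities already at hand. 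This is self-contained and exploits the target constraint $\boldsymbol m\in\mathbb S^2$ directly.

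Your alternative---splitting $|\nabla h_k|^2$ and appealing to a Meyers/reverse-H\"older gain $\nabla h_k\in L^{2+\sigma}_{\mathrm{loc}}$---is a reasonable idea, and once you combine it with the John--Nirenberg bound $h_k\in L^q(P_{9/10})$ for all $q<\infty$, any $\sigma>0$ closes the H\"older pairing; the factor $\lambda_k$ then kills the term. The weak point is that you treat the Meyers step as routine. It is not: the rescaled equation \eqref{hk-eq} carries a right-hand side of the form $\lambda_k|\nabla h_k|^2(\lambda_k h_k+a_k)$, i.e.\ \emph{quadratic} gradient growth, which is exactly the obstruction to the standard Caccioppoli/Gehring argument. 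One can argue that $\lambda_k|h_k-c|\le 4$ in $L^\infty$ (since $|\lambda_k h_k+a_k|=1$), but this only yields a bound with constant $4$, not a small one that can be absorbed; getting a uniform reverse H\"older therefore still requires exploiting the smallness of $\lambda_k$ or the $\varepsilon$-regularity regime, and that justification is missing from your sketch. So your route can likely be completed, but the step you flag as the ``main obstacle'' is in fact harder than you indicate, whereas the paper's H\'elein trick sidesteps it entirely by using the geometric structure of the target.
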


\begin{proof}
Subtracting \eqref{hk-eq} from \eqref{h-eq}, one has
\begin{align}\label{hk-h-eq}
\begin{split}
&\int_{P_{1}}({\partial_t}h_{k}-{\partial_t}h)\cdot w +(\nabla h_{k}-\nabla h):\nabla w dxdt\\
&=\lambda_{k}\int_{P_{1}}|\nabla h_{k}|^{2}(\lambda_{k}h_{k}+a_{k})\cdot w dxdt
-\int_{P_{1}}(\lambda_{k}h_{k}+a_{k})\times{\partial_t}h_{k}\cdot w dxdt\\
&\quad-r_{k}\int_{P_{1}}(\lambda_{k}h_{k}+a_{k})\times[v(x_{k}+xr_{k},t_{k}+tr_{k}^{2})\cdot \nabla]h_{k}\cdot w dxdt\\
&=:R_{1}+R_{2}+R_{3}.
\end{split}
\end{align}
We find that the above equation still hold about $w\in M^{2,3,1}_{0}(P_{\frac{9}{10}})\cap L^{\infty}(P_{\frac{9}{10}})$ by approximation. Let $w=\zeta^{2}(h_{k}-h)$, $\zeta\in C^{\infty}_{c}(P_{\frac{9}{10}})$, the terms on the left-hand side of \eqref{hk-h-eq} are written as $L$. When $k\rightarrow 0$, it holds that
\begin{align*}
L&=\int_{P_{\frac{9}{10}}}\zeta^{2}|\nabla h_{k}-\nabla h|^{2}dxdt
+2\int_{(P_{\frac{9}{10}})}\zeta(h_{k}-h)(\nabla h_{k}-\nabla h)\nabla\zeta dxdt\\
&+\int_{P_{\frac{9}{10}}}\zeta^{2}({\partial_t}h_{k}-{\partial_t}h)(h_{k}-h)dxdt\\
&\geq \int_{P_{\frac{1}{2}}}|\nabla h_{k}-\nabla h|^{2}dxdt+o(1),
\end{align*}
where $o(1)$ can be obtained by \eqref{strong conver}.

In the following, we estimate the terms $R_{2}$ and $R_{3}$. Applying H\"{o}lder's inequality, \eqref{weak conver}, \eqref{strong conver}, and \eqref{hk-L4-strong}, as $k\rightarrow \infty$ we see that
\begin{align*}
R_{2}
&=\lambda_{k}\int_{P_{\frac{9}{10}}}h_{k}\times{\partial_t}h_{k}\cdot \zeta^{2}(h_{k}-h) dxdt\\
&\leq \lambda_{k}\|\zeta\|^{2}_{L^{\infty}(P_{\frac{9}{10}})}\|h_{k}-h\|_{M^{4,3}(P_{\frac{9}{10}})}
\|h_{k}\|_{M^{4,3}(P_{\frac{9}{10}})}\|{\partial_t}h_{k}\|_{M^{2,3}(P_{\frac{9}{10}})}\\
&\rightarrow 0.
\end{align*}
In the same way, one has
\begin{align*}
R_{3}
&=\lambda_{k}r_{k}\int_{P_{\frac{9}{10}}} h_{k}\times[\boldsymbol{v}(x_{k}+xr_{k},t_{k}+tr^{2}_{k})\cdot \nabla]h_{k}\cdot \zeta^{2}(h_{k}-h)dxdt\\
&\leq \lambda_{k}r_{k} \|\zeta\|^{2}_{L^{\infty}(P_{\frac{9}{10}})}\|h_{k}-h\|_{M^{4,3}(P_{\frac{9}{10}})}
\|h_{k}\|_{M^{4,3}(P_{\frac{9}{10}})}\|\nabla h_{k}\|_{M^{2,3}(P_{\frac{9}{10}})}\|\boldsymbol{v}\|_{L^{\infty}(P_{\frac{9}{10}})}\\
&\rightarrow 0.
\end{align*}

Next, we consider the term $R_{1}$. Note that
\begin{align*}
R_{1}
&=\lambda_{k}\int_{P_{\frac{9}{10}}}\zeta^{2}h_{k,x_{l}}^{j}\left(h_{k,x_{l}}^{j}(a_{k}^{i}+\lambda_{k}h_{k}^{i})(h_{k}^{i}-h^{i})\right)dxdt\\
&=\lambda_{k}\int_{P_{\frac{9}{10}}}\zeta^{2}h_{k,x_{l}}^{j}\left(h_{k,x_{l}}^{j}(a_{k}^{i}+\lambda_{k}h_{k}^{i})
-h_{k,x_{l}}^{i}(a_{k}^{j}+\lambda_{k}h_{k}^{j})\right)(h_{k}^{i}-h^{i})dxdt,
\end{align*}
where $i,j,l=1,2,3$, and the last equality holds in the light of $|a_{k}+\lambda_{k}h_{k}|=1$ (Here is H\'{e}lein's trick \cite{Helein}). Write $b_{k,l}^{i,j}=h_{k,x_{l}}^{j}(a_{k}^{i}+\lambda_{k}h_{k}^{i})
-h_{k,x_{l}}^{i}(a_{k}^{j}+\lambda_{k}h_{k}^{j})$. As $k\rightarrow \infty$, by H\"{o}lder's inequality, Young's inequality, the Plancherel equality, the Sobolev inequality, \eqref{weak conver}, \eqref{strong conver} and \eqref{hk-L4-strong}, one has
\begin{align*}
R_{1}
&=\lambda_{k}\int_{P_{\frac{9}{10}}}\zeta h_{k,x_{l}}^{j}b_{k,l}^{i,j}\zeta(h_{k}^{i}-h^{i})dxdt\\
&\leq\lambda_{k}\|h_{k,x_{l}}^{j}\zeta b_{k,l}^{i,j}\|_{M^{\frac{3}{2},3}(P_{\frac{9}{10}})}\|\zeta(h_{k}^{i}-h^{i})\|_{M^{3,3}(P_{\frac{9}{10}})}\\
&\leq \lambda_{k}\|h_{k,x_{l}}^{j}\|_{M^{2,3}(P_{\frac{9}{10}})}
\|\zeta b_{k,l}^{i,j}\|_{M^{6,3}(P_{\frac{9}{10}})}\|h_{k}^{i}-h^{i}\|_{M^{4,3}(P_{\frac{9}{10}})}\|\zeta\|_{M^{12,3}(P_{\frac{9}{10}})}\\
&\leq Co(1)\|\zeta b_{k,l}^{i,j}\|_{M^{6,3}(P_{\frac{9}{10}})}\\
&\leq Co(1)\|\nabla (\zeta b_{k,l}^{i,j})\|_{M^{2,3}(P_{\frac{9}{10}})}\\
&\leq Co(1)\|i\xi\hat{\zeta}(\xi)\ast\hat{b_{k,l}^{i,j}}(\xi)\|_{M^{2,3}_{\xi}(\mathbb{R}^{3})}\\
&\leq Co(1)\|i\xi\hat{\zeta}(\xi)\|_{M^{1,3}(\mathbb{R}^{3})}\|b_{k,l}^{i,j}\|_{M^{2,3}(P_{\frac{9}{10}})}\\
&\leq Co(1)\|\nabla \zeta\|^{2}_{M^{2,3}(P_{\frac{9}{10}})}\|b_{k,l}^{i,j}\|_{M^{2,3}(P_{\frac{9}{10}})}\\
&\rightarrow 0.
\end{align*}
Hence, we get that $R_{1}+R_{2}+R_{3}\rightarrow0$. Therefore,  we have
\begin{align*}
\int_{P_{\frac{1}{2}}}|\nabla h_{k}-\nabla h|^{2}dxdt\leq o(1), \text{ as } k\rightarrow\infty.
\end{align*}
Obviously, we get the desired result.
\end{proof}

\begin{lemma}[\cite{Giaq}, Lemma 6]\label{Lpa}
If $\mu>1$, then $\mathscr{L}^{p,\mu}(A,\delta)$ is isomorphic to $C^{0,\alpha}(\Omega,\delta)$, the space of $\alpha$-H\"{o}lder continuous functions with respect to the metric $\delta$, where $\alpha=(\frac{n+2}{p})(\mu-1)$, $A=\Omega\times(0,T)$ is an open set in $\mathbb{R}^{n+1}$, $\Omega$ is open in $\mathbb{R}^{n}$, and $\mathscr{L}^{p,\mu}(A,\delta), \mu>0$ is the Banach space of all functions in $L_{p}$ satisfying
\begin{align*}
[f]^{p}_{p,\mu,A}=\sup_{z_{0}\in A,r>0}[meas(A\cap Q(z_{0},r))]^{-\mu}\int_{A\cap Q(z_{0},r)}|f-{\int\hspace{-1.05em}-}_{z_{0},r}f|^{p}dx<\infty,
\end{align*}
and ${\int\hspace{-0.90em}-}_{z_{0},r}f=\frac{1}{meas(A\cap Q(z_{0},r))}\int_{A\cap Q(z_{0},r)}f(z)dz$.
\end{lemma}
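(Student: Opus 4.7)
The plan is to establish a Campanato-type isomorphism by proving each inclusion separately, exploiting the parabolic scaling $\mathrm{meas}(Q(z_0,r))\sim r^{n+2}$. The exponent in the statement is dictated by the balance identity
\[
\alpha p+(n+2)=(n+2)\mu,
\]
since the H\"older oscillation contributes a factor $r^{p\alpha}$ to the Campanato integral, the Lebesgue measure contributes $r^{n+2}$, and the normalization divides by $r^{(n+2)\mu}$; these balance exactly when $\alpha=\tfrac{n+2}{p}(\mu-1)$.

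For the easy direction $C^{0,\alpha}\hookrightarrow\mathscr{L}^{p,\mu}$, I would fix $f\in C^{0,\alpha}(\Omega,\delta)$ and write $\bar f_{z_0,r}$ for its mean over $A\cap Q(z_0,r)$. For any $z,z'\in A\cap Q(z_0,r)$ one has $|f(z)-f(z')|\leq [f]_{\alpha}\,\delta(z,z')^\alpha\leq [f]_{\alpha}(2r)^\alpha$; averaging over $z'$ gives $|f(z)-\bar f_{z_0,r}|\lesssim [f]_{\alpha}\,r^\alpha$. Raising to the $p$-th power, integrating over $A\cap Q(z_0,r)$, and dividing by $\mathrm{meas}(A\cap Q(z_0,r))^\mu\gtrsim r^{(n+2)\mu}$ yields $[f]^p_{p,\mu,A}\lesssim [f]^p_{\alpha}$ by the balance identity above.

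The harder direction $\mathscr{L}^{p,\mu}\hookrightarrow C^{0,\alpha}$ is the heart of the matter. First I would show that $r\mapsto\bar f_{z_0,r}$ is Cauchy as $r\downarrow 0$: H\"older's inequality applied to $\bar f_{z_0,\rho}-\bar f_{z_0,r}$, after inserting $f-\bar f_{z_0,r}$, combined with the defining bound for $[f]_{p,\mu,A}$, yields $|\bar f_{z_0,\rho}-\bar f_{z_0,r}|\lesssim [f]_{p,\mu,A}\,r^\alpha$ whenever $r/2\leq\rho\leq r$. Iterating over dyadic radii $r/2^k$ produces a geometric series summing to $\lesssim [f]_{p,\mu,A}\,r^\alpha$, so $f^{\ast}(z_0):=\lim_{r\to 0}\bar f_{z_0,r}$ exists; by Lebesgue differentiation $f^{\ast}=f$ almost everywhere, and $|f^{\ast}(z_0)-\bar f_{z_0,r}|\lesssim [f]_{p,\mu,A}\,r^\alpha$. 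To upgrade to H\"older continuity of $f^{\ast}$, given $z_0,z_1\in A$ with $d:=\delta(z_0,z_1)$, I would compare $\bar f_{z_0,2d}$ and $\bar f_{z_1,2d}$ via the overlap $Q(z_0,2d)\cap Q(z_1,2d)$, whose measure is comparable to $d^{n+2}$. The Campanato bound on each cylinder yields $|\bar f_{z_0,2d}-\bar f_{z_1,2d}|\lesssim [f]_{p,\mu,A}\,d^\alpha$; chaining with the previous Cauchy estimate gives $|f^{\ast}(z_0)-f^{\ast}(z_1)|\lesssim [f]_{p,\mu,A}\,d^\alpha$.

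The main obstacle lies in the behaviour of the averages near $\partial A$: the estimate $\mathrm{meas}(A\cap Q(z_0,r))\gtrsim r^{n+2}$ used throughout is valid only when $A$ satisfies a uniform interior density (``type $(A)$'') condition, which is implicit in the statement; without it, the scaling constants in the chain-of-averages argument are not uniform in $z_0$, and both the Cauchy property and the final H\"older estimate can fail at the boundary. Once this hypothesis is in force, all constants become uniform in $z_0$ and the remaining details mirror the classical elliptic Campanato theorem, with the parabolic metric $\delta$ replacing the Euclidean distance.
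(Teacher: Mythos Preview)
Your proof sketch is the standard Campanato argument and is essentially correct, but note that the paper does not actually prove this lemma: it is quoted verbatim from \cite{Giaq} (Giaquinta--Giusti, Lemma~6) and used as a black box in the proof of Proposition~\ref{holder space}. There is therefore nothing to compare against; your outline simply supplies the classical proof that the paper omits by citation.
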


Inspired by \cite[Section 6]{Feld}, we see that the key to improve the regularity is that the solution is bounded in $C^{0,\alpha}(P_{r}(z_{0}))$ and $E(z,\rho)<\rho^{2\gamma}$.
\begin{proposition}\label{holder space}
If we have $\boldsymbol{m}\in C^{0,\alpha}(P_{r}(z_{0}))$ and $E(z,\rho)<\rho^{2\gamma}$, then $\boldsymbol{m}\in C^{\infty}(P_{\frac{r}{2}}(z_{0}))$.
\end{proposition}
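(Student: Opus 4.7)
The plan is to obtain smoothness by a standard parabolic bootstrap, using the smallness of $E(z,\rho)<\rho^{2\gamma}$ to gain an initial Hölder improvement and then iterating. First I would rewrite the LLS equation in a form amenable to regularity theory: using $|\boldsymbol{m}|=1$ and the identity $\boldsymbol{m}\times\Delta\boldsymbol{m}=\nabla\cdot(\boldsymbol{m}\times\nabla\boldsymbol{m})$, one sees
\begin{align*}
\partial_t\boldsymbol{m}-\varepsilon\Delta\boldsymbol{m}
=-\nabla\!\cdot\!(\boldsymbol{m}\times\nabla\boldsymbol{m})
+\varepsilon|\nabla\boldsymbol{m}|^2\boldsymbol{m}
-(\boldsymbol{v}\cdot\nabla)\boldsymbol{m}
-\boldsymbol{m}\times(\boldsymbol{v}\cdot\nabla)\boldsymbol{m},
\end{align*}
so on the left we have a uniformly parabolic operator and on the right a quadratic first-order source plus lower order transport terms controlled by $\boldsymbol{v}$.

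Second, I would use the decay from Lemma \ref{E(z,r)} to convert the small-energy condition into a Morrey bound. Iterating $E(z,\tau\rho)\le 2c_1\tau^2E(z,\rho)$ on dyadic scales gives, after combination with the given $E(z,\rho)<\rho^{2\gamma}$, a Morrey-type control of the form $r^{2-n}\int_{P_r(z)}|\nabla\boldsymbol{m}|^2\,dxdt\lesssim r^{2\gamma'}$ for some $\gamma'>0$ uniformly in $z\in P_{r/2}(z_0)$. Combined with the Hölder assumption on $\boldsymbol{m}$, I would recast the source as a divergence-form perturbation with small Morrey norm: write $\boldsymbol{m}\times\nabla\boldsymbol{m}=(\boldsymbol{m}-\boldsymbol{m}(z_0))\times\nabla\boldsymbol{m}+\boldsymbol{m}(z_0)\times\nabla\boldsymbol{m}$, and bound the first factor by the Hölder norm. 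Viewing the equation as a small perturbation of the heat equation, the parabolic Campanato/Morrey estimate (together with Lemma \ref{Lpa}) then upgrades $\nabla\boldsymbol{m}$ to a Morrey space $\mathscr{L}^{2,\mu}$ with $\mu>1$, which is isomorphic to a parabolic Hölder space. This yields $\nabla\boldsymbol{m}\in C^{0,\beta}(P_{r'}(z_0))$ for some $\beta>0$ and $r'<r/2$, i.e., $\boldsymbol{m}\in C^{1,\beta}$.

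Third, once $\boldsymbol{m}\in C^{1,\beta}$, the right-hand side lies in $C^{0,\beta}$, so classical parabolic Schauder estimates give $\boldsymbol{m}\in C^{2,\beta}$. Differentiating the equation and iterating this Schauder step yields $\boldsymbol{m}\in C^{k,\beta}(P_{r/2}(z_0))$ for every $k\in\mathbb{N}$, hence $\boldsymbol{m}\in C^{\infty}(P_{r/2}(z_0))$. At each iterate the cylinder can be shrunk slightly, and only finitely many shrinkages are needed to reach $P_{r/2}(z_0)$.

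The main obstacle is the first step: bridging the quadratic gradient nonlinearity $|\nabla\boldsymbol{m}|^2$ and the conservative cross-product term $\boldsymbol{m}\times\nabla\boldsymbol{m}$ without losing derivatives. This is where the smallness of $E(z,\rho)$ is essential, as it plays the role of a smallness-of-oscillation assumption in $\varepsilon$-regularity theorems for harmonic maps (cf.\ Chen-Struwe, Feldman), and where the covariant structure together with $|\boldsymbol{m}|=1$ (as used in Lemma \ref{strong-conver-th} via Hélein's trick) controls the cancellations needed to view $|\nabla\boldsymbol{m}|^2\boldsymbol{m}$ as a tolerable source. After that hurdle, the remaining iterations are the standard Schauder bootstrap for semilinear parabolic systems.
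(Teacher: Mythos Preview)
Your proposal is correct and follows essentially the same route as the paper. The paper's own proof is much terser: it first verifies the two hypotheses of the proposition from the small-energy condition of Lemma~\ref{E(z,r)} (iterating $E(z_0,\tau^k r)\le(2c_1\tau^2)^kE(z_0,r)$ to get $E(z,\rho)\lesssim\rho^{2\gamma}$, then feeding this into Lemma~\ref{lemma BMO es} and Lemma~\ref{Lpa} to conclude $\boldsymbol{m}\in C^{0,\gamma}$), and then dispatches the entire bootstrap to smoothness in one line by citing Feldman's higher-order regularity lemma \cite[Lemma 21]{Feld}. You spell out explicitly what that citation contains: freeze coefficients using the H\"older hypothesis, use the Morrey decay to treat the quadratic source as a small perturbation, obtain $\nabla\boldsymbol{m}\in C^{0,\beta}$ via parabolic Campanato estimates, and then run the standard Schauder iteration. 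Both arguments rely on the same ingredients (energy decay $\Rightarrow$ H\"older $\Rightarrow$ bootstrap), so there is no genuine methodological difference---you are simply unpacking the black-box reference the paper invokes.
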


\begin{proof}
Firstly, we want to show that $E(z,\rho)<\rho^{2\gamma}$. Assume that $2c_{1}>1, \tau\in(0,(2c_{1})^{-\frac{1}{2}})$, $z_{0}=(x_{0},t_{0}), t_{0}>T_{0}$ and $E(z_{0},r)<\varepsilon_{0}(\frac{T_{0}}{2},\tau)$. Iterating \eqref{E(z,r)}, we can obtain
$E(z_{0},\tau^{k}r)\leq (2c_{1}\tau^{2})^{k}E(z_{0},r)$. We see that $k\rightarrow\infty$ is equivalent to:
\begin{align*}
\lim_{r\rightarrow0^{+}}E(z_{0},r)=0.
\end{align*}
By \cite[Theorem 1]{Giaq}, we know that there exists constants $r>0,~s<\frac{r}{2}$ such that for any $\gamma\in(0,1),~z\in P_{s}(z_{0})$, $\rho<\frac{r}{2}$, then one has
\begin{align*}
E(z,\rho)<c\rho^{2\gamma}.
\end{align*}

Next, we want to prove $\boldsymbol{m}\in C^{0,\alpha}(P_{r}(z_{0}))$. Notice that we can prove $\boldsymbol{m}\in C^{0,\alpha}(P_{r}(z_{0}))$ instead of proving $\boldsymbol{m}\in \mathscr{L}^{p,\mu}(P_{r}(z_{0}))<\infty$ by Lemma \ref{Lpa}.
Applying Lemma \ref{lemma BMO es}, we have
\begin{align*}
\int_{p_{\alpha r(z)}}|\boldsymbol{m}(x,t)-(\boldsymbol{m})_{z,\alpha r}|^{2}dxdt
&\leq C_{\alpha}r^{2}\int_{p_{r(z)}}|\nabla \boldsymbol{m}|^{2}dxdt \\
&\leq C_{\alpha}r^{2+n}E(z,r)
\leq C_{\alpha}r^{2+n+2\gamma}.
\end{align*}
Selecting $\mu=\frac{2\gamma}{n+2}+1$, then one has
\begin{align*}
\frac{\int_{p_{\alpha r(z)}}|\boldsymbol{m}(x,t)-(\boldsymbol{m})_{z,\alpha r}|^{2}dxdt}
{|P_{\alpha r(z)}|^\mu}
\leq C_{\alpha}\frac{r^{2+n+2\gamma}}{r^{\mu(n+2)}}
= C_{\alpha}<\infty.
\end{align*}
By Lemma \ref{Lpa}, we can infer that $\boldsymbol{m}\in \mathscr{L}^{2,\frac{2\gamma}{n+2}+1}(P_{r}(z_{0}))<\infty$. Then, $\boldsymbol{m} \in C^{0,\gamma}(P_{r}(z_{0}))$. By the higher order regularity theorem \cite[Lemma 21]{Feld}, we conclude that $\boldsymbol{m}\in C^{\infty}(P_{r}(z_{0}))$. Obviously, we complete the proof of Proposition \ref{holder space} and then we obtain the main theorem.
\end{proof}

\smallskip

\section*{Acknowledgments}
H. Wang's research is supported by the National Natural Science Foundation of China (Grant No.~11901066), the
Natural Science Foundation of Chongqing (No.~cstc2019jcyj-msxmX0167) and projects No.~2022CDJXY-001,  No.~2020CDJQY-A040
supported by the Fundamental Research Funds for the Central Universities.

\bigskip

\end{document}